\documentclass[11pt]{article}
\usepackage{graphicx,amsmath,amsfonts,amssymb,bbm,amsthm}
\usepackage{psfrag}
\usepackage{epsf}
\usepackage{color}

\headheight=8.2pt \topmargin=0pt \textheight=620pt
\textwidth=460pt \oddsidemargin=18pt \evensidemargin=18pt

\makeatletter\@addtoreset {equation}{section}\makeatother

\theoremstyle{plain}
\newtheorem{theo}{Theorem}
\newtheorem{lem}{Lemma}[section]

\newtheorem{prop}{Proposition}[section]

\theoremstyle{remark}
\newtheorem{rem}{Remark}[section]

{\hspace*{\fill}$\rule{.3\baselineskip}{.35\baselineskip}$\end{trivlist}}

\newcommand{\R}{\mathbb{R}}

\newcommand{\Z}{\mathbb{Z}}
\newcommand{\N}{\mathbb{N}}

\renewcommand{\geq}{\geqslant}
\renewcommand{\leq}{\leqslant}

\renewcommand{\phi}{\varphi}
\newcommand{\be}{\begin{eqnarray}}
\newcommand{\ee}{\end{eqnarray}}

\newcommand{\eps}{\varepsilon}

\def\Tend#1#2{\mathop{\longrightarrow}\limits_{#1\rightarrow#2}}

\begin{document}

\title{\bf Justification of the log-KdV equation in granular chains: the case of precompression}

\author{Eric Dumas$^{1}$ and Dmitry Pelinovsky$^{2}$ \\
{\small $^{1}$ Institut Fourier, Universit\'e Grenoble 1,
38402 Saint Martin d'H\`eres cedex, France} \\
{\small $^{2}$ Department of Mathematics, McMaster
University, Hamilton, Ontario, Canada, L8S 4K1}  }

\date{\today}
\maketitle

\begin{abstract}
For travelling waves with nonzero boundary conditions, we justify the logarithmic Korteweg--de Vries
equation as the leading approximation of the Fermi-Pasta-Ulam lattice with Hertzian nonlinear potential
in the limit of small anharmonicity. We prove control of the approximation error for the travelling solutions
satisfying differential advance-delay equations, as well as control of the approximation error
for time-dependent solutions to the lattice equations on long but finite time intervals. We also show
nonlinear stability of the travelling waves on long but finite time intervals.
\end{abstract}

\section{Introduction}

Solitary waves in anharmonic granular chains with Hertzian interaction forces are modelled by the
Fermi--Pasta--Ulam (FPU) lattices with non-smooth nonlinear potentials \cite{Nesterenko}.
We write the FPU lattice in the form
\begin{equation}
\label{FPU}
\ddot{u}_n = V_\alpha'(u_{n+1}) - 2 V_\alpha'(u_n) + V_\alpha'(u_{n-1}),
\quad n \in \mathbb{Z},
\end{equation}
where $(u_n)_{n \in \mathbb{Z}}$ is a function of the time
$t\in\mathbb{R}$, with values in $\mathbb{R}^{\mathbb{Z}}$,
and the dot denotes the time derivative. In terms of the FPU lattice,
$u_n$ corresponds to the relative displacement between locations of two adjacent particles.
The Hertzian nonlinear potential $V_{\alpha} \in C^2(\mathbb{R})$ is given by
\begin{equation}
\label{Hertzian}
V_\alpha(u) = \frac{1}{1+\alpha} |u|^{1+\alpha} H(-u), \quad \alpha > 1,
\end{equation}
where $H(u)$ is the standard Heaviside function. Recently, the FPU lattice (\ref{FPU})--(\ref{Hertzian})
in the limit of small anharmonicity of the Hertzian interaction forces (that is, for $\alpha = 1 + \epsilon^2$
with $\epsilon \to 0$) was formally reduced to the logarithmic Korteweg--de Vries (log-KdV) equation \cite{Chat,JP13}:
\begin{equation}
\label{logKdV}
2 v_{\tau} + \frac{1}{12} v_{xxx} + (v \log|v|)_x = 0, \quad (x,\tau) \in \mathbb{R} \times \mathbb{R},
\end{equation}
with the asymptotic correspondence $u_n(t) \approx -v(x,\tau)$, $x = \epsilon(n-t)$, $\tau = \epsilon^3 t$.
Here and in what follows, we denote partial derivatives by subscripts. Note that
in the derivation of (\ref{logKdV}), $v$ is assumed to be positive (otherwise, the Heaviside function
should appear in the nonlinear term). Experimental evidences
for validity of the limit $\alpha \to 1$ in the context of granular chains with hollow particles
can be found in \cite{daraio}.

The log--KdV equation (\ref{logKdV}) has a two-parameter family of Gaussian travelling waves
\begin{equation}
\label{soliton-orbit}
v(x,\tau) = e^{2b} v_G(x- b \tau -a), \quad a,b \in \mathbb{R},
\end{equation}
where $v_G$ is a symmetric standing wave given by
\begin{equation}
\label{Gaussian}
v_G(x) := \sqrt{e} e^{-3 x^2}, \quad x \in \mathbb{R}.
\end{equation}
Global solutions to the log--KdV equation (\ref{logKdV}) in the energy space
\begin{equation}
\label{energy-space}
X := \left\{ v \in H^1(\mathbb{R}) : \quad v^2 \log|v| \in L^1(\mathbb{R}) \right\},
\end{equation}
were constructed in \cite{CP14}. In addition, spectral and linearized stability
of Gaussian travelling waves were proved in \cite{CP14} with analysis of the linearized evolution
problem. Unfortunately, technical difficulties exist to prove nonlinear orbital stability
of Gaussian travelling waves, as well as to construct solutions in spaces of higher regularity \cite{CP14}.
The technical difficulties are caused by the necessity to control the logarithmic nonlinearity
near $v = 0$, where it is not differentiable.

This paper addresses a different problem, namely the rigorous justification of the
log--KdV equation (\ref{logKdV}) in the context of the FPU lattice (\ref{FPU})--(\ref{Hertzian}).
Numerical approximations of time-dependent solutions to the FPU lattice in \cite{JP13} suggest that
the Gaussian travelling waves represent well the stable solitary waves in granular chains, which are known to
propagate robustly in physical experiments \cite{sen}. Therefore,
it becomes relevant to control the approximation error between the corresponding
solutions to the FPU lattice (\ref{FPU})--(\ref{Hertzian}) and the log--KdV equation
(\ref{logKdV}).

In a similar context of FPU lattices with sufficiently smooth nonlinear potential $V$,
small-amplitude solutions are described by the celebrated KdV equation. In a series
of papers, Friesecke and Pego \cite{FP1,FP2,FP3,FP4} justified the KdV approximation for
travelling waves and proved the nonlinear stability of small-amplitude solitary waves
in generic FPU chains from analysis of the orbital and asymptotic stability of KdV solitons.
Later these results were extended to the proof of asymptotic stability of several solitary
waves in the FPU lattices by Mizumachi \cite{miz1,miz2} and Hoffmann and Wayne \cite{HW1,HW2}.
Independently, validity of the KdV equation for time-dependent solutions
on the time scale of $\mathcal{O}(\epsilon^{-3})$
was obtained by Schneider--Wayne \cite{SW00} and Bambusi--Ponno \cite{BP06}.
Recently, these results were generalized for polyatomic FPU lattices in \cite{GMWZ}.
Because of the lack of smoothness of the potential $V_\alpha$
in (\ref{FPU})--(\ref{Hertzian}), and therefore also of
the nonlinearity in the log--KdV equation (\ref{logKdV}) near the origin,
none of the previous results can be applied to the FPU lattice (\ref{FPU})--(\ref{Hertzian}).

As a first step towards the ultimate goal of justification
of the log--KdV equation (\ref{logKdV}), we shall here consider solutions
with nonzero (positive) boundary conditions at infinity. In other words,
we shall consider solutions bounded from below by some positive constant
and satisfying the boundary conditions
$v(x,t) \to v_0 > 0$ as $|x| \to \infty$ for all $t \in \mathbb{R}$.
In the context of the anharmonic granular chains (\ref{FPU})--(\ref{Hertzian}),
these boundary conditions correspond to the constant precompression force applied to the granular chains.

The precompression technique is well-known both numerically and experimentally for regularization
of responses of granular chains \cite{Nesterenko}. Typically, small-amplitude
perturbations of the constant precompressed state are handled through
Taylor expansion of the nonlinearity, thanks to the smoothness of the nonlinear potential $V(u)$
or the logarithmic nonlinearity near any point $v_0 > 0$.
In comparison to this standard technique, we avoid Taylor series
expansion and consider large-amplitude solutions to the FPU lattice (\ref{FPU})--(\ref{Hertzian})
with $\alpha = 1 + \epsilon^2$ in the limit $\epsilon \to 0$. In this
way, we confirm the validity of the log--KdV equation (\ref{logKdV})
with nonzero boundary conditions for the existence and stability of travelling waves.

Note that the nonlinear function $p(v) = v \log|v|$ in the log--KdV equation (\ref{logKdV})
satisfies for any $v \geq v_0 > 0$ the general assumption $p''(v) > 0$ and $p'''(v) \leq 0$
required by the orbital stability theory for large-amplitude travelling waves of the generalized KdV equation
(see Theorem 1 in \cite{H11}).
Consequently, travelling waves of arbitrary amplitudes with the boundary conditions $v \to v_0$ as $x \to
\pm \infty$ are orbitally stable with respect to the time evolution of the log--KdV equation
(\ref{logKdV}) in the classical sense \cite{Pava}.

There are three main results in our work. First, we study travelling wave solutions
to (\ref{FPU})--(\ref{Hertzian}) with $\alpha = 1 + \epsilon^2$, under the form
$u_n(t)=u(n-ct)$ with speed $c = (v_0^{\epsilon^2} (1 + \lambda\epsilon^2))^{1/2}$,
for any $\lambda>1$. We provide a rigorous
approximation of such travelling waves, in the limit $\epsilon\rightarrow0$, by means of travelling solutions
to the log--KdV equation \eqref{logKdV}.

Next, we show that a simple energy argument gives nonlinear stability
of the previously constructed (large-amplitude)  travelling wave solutions
to the FPU lattice equations (\ref{FPU})--(\ref{Hertzian})
with $\alpha = 1 + \epsilon^2$ on the time scale $\mathcal{O}(\epsilon^{-3})$, where
the approximation of the log--KdV equation (\ref{logKdV}) is formally applicable.
The energy argument we develop here does not
use the spectral information on the linearized log--KdV equation and holds
for time-dependent perturbations, which may violate the scaling of space and time variables
resulting in the log--KdV equation (\ref{logKdV}). It only uses the precise
justification result for the travelling waves of the FPU lattice.

Finally, we control the error in the approximation of time-dependent solutions
to the FPU lattice by solutions to the log--KdV equation up to the time scale $\mathcal{O}(\epsilon^{-3})$
by extending the same energy argument used for control of the nonlinear stability of travelling waves.

Although our results are analogous to the outcomes of the corresponding works
\cite{FP1} and \cite{SW00}, a different analytical
technique is adopted to obtain the justification and stability results.
The technique is thought to be applicable to a much large
class of FPU models which result in the generalized KdV equation
with possibly large-amplitude travelling waves.
We also point out that the methods of neither \cite{FP1} nor \cite{SW00} cannot be
immediately applied to the justification of the log--KdV equation (\ref{logKdV})
because they require the smallness of the travelling wave amplitude.

In more details, we use the method of decomposition of solutions
in the Fourier space, which was originally developed in \cite{Schn1}
and used in \cite{DU,IW,Schn2} (see also Chapter 2 in \cite{Pel-book})
for the justification of asymptotic reductions of solitary waves in the nonlinear Schr\"{o}dinger
equation with a periodic potential. This technique is alternative to the method of Friesecke and Pego
\cite{FP1} that relies on approximations of roots of the dispersion relations and on an appropriate
version of a fixed-point theorem. We also use fixed-point arguments but in a more classical way.

While the strategy adopted in \cite{FP2,FP3,FP4} gives
nonlinear stability results for FPU travelling waves globally in time,
it applies only to the small-amplitude travelling waves. It also relies on the spectral
information of the linearized KdV equation, modulation equations along
the two-dimensional manifold of the travelling waves, and careful analysis of linearized
advance-delay equations, all of which may not
be available when dealing with the log--KdV equation (\ref{logKdV}).

The plan of the paper is as follows. Section 2 presents the main results.
Section 3 is devoted to the justification of the log--KdV approximation for the travelling
waves of the FPU lattice. Section 4 is devoted to the orbital stability
of the FPU lattice travelling waves on the time scale $\mathcal{O}(\epsilon^{-3})$.
Section 5 describes justification of the log--KdV equation for the time-dependent
solution to the FPU lattice. Section 6 discusses these results
in the context of general FPU lattices.

\section{Main results}

Substituting the travelling wave ansatz $u_n(t) = u(z)$ with $z = n-ct$ for a positive speed $c > 0$
into the FPU lattice (\ref{FPU})--(\ref{Hertzian}),
we obtain the differential advance-delay equation
\begin{equation}
\label{diffadvdel-eq}
c^2 u''(z) = - \Delta |u|^{\alpha} H(-u)(z), \quad z \in \mathbb{R},
\end{equation}
where $\alpha > 1$, and $\Delta$ is the discrete Laplacian operator on the infinite line,
$$
\Delta f(z) := f(z+1) - 2 f(z) + f(z-1).
$$
Since the limit $\alpha \to 1$ is considered, we set $\alpha := 1 + \epsilon^2$ for a small
positive $\epsilon$. Here and in the sequel, we shall drop the dependence of the functions
(such as $u$) upon $\epsilon$ for simplicity, and only mention this dependence in the
main statements. With a precompression level $v_0 > 0$, we set
\begin{equation}
\label{scaling transformation}
u(z) = -v_0 (1 + w(z)) \quad \mbox{\rm and} \quad
c^2 = v_0^{\epsilon^2} (1 + \mu),
\end{equation}
where $\mu > -1$ is an arbitrary parameter
and $w(z)$ is assumed to decay to zero at infinity and to be bounded in the interval
\begin{equation}
\label{apriori-bound}
-1 < C_- \leq w(z) \leq C_+ < \infty, \quad \mbox{\rm for every} \; z \in \mathbb{R},
\end{equation}
where $C_{\pm}$ are $\epsilon$-independent and $C_+$ does not have to be smaller than one
(that is, $\| w \|_{L^{\infty}}$ may exceed one). Under the a priori bound (\ref{apriori-bound}),
we rewrite the existence problem in the form
\begin{equation}
\label{DFA}
(1+\mu) w''(z) = \Delta \tilde{V}_{\epsilon}(w)(z), \quad z \in \mathbb{R}.
\end{equation}
Here the potential
$$
\tilde{V}_{\epsilon}(w) := \frac{1}{2+\epsilon^2} \left[ (1 + w)^{2+\epsilon^2} - 1 \right] - w, \quad w > -1
$$
is $C^2(-1,\infty)$, positive near $w = 0$, and $\tilde{V}_{\epsilon}(w)/w^2$ increases strictly with $w$ for all $w \in (0,\infty)$.
For such potentials, Theorem 1 of Friesecke and Wattice \cite{friesecke} applies (as it was also noted in \cite{mackay}).
By this theorem, which is proved by a variational method based on the concentration compactness principle,
there exists a nontrivial positive solution $w \in H^1(\mathbb{R})$ of the differential advance-delay
equation (\ref{DFA}) for some parameter $\mu$ satisfying the constraint
$1 + \mu > \tilde{V}_{\epsilon}''(0) = 1 + \epsilon^2$ (that is, for $\mu > \epsilon^2$).
Moreover, recent work \cite{StKev2013} suggests that these travelling waves are smooth and exponentially
localized.

To obtain the formal limit to the stationary log--KdV equation, we
set the variables $x = \epsilon z$ and $W(x) = w(z)$, use
the Taylor expansions
\begin{equation}
\label{expansion-0}
\Delta w(z) = \epsilon^2 W''(x) + \frac{1}{12} \epsilon^4 W''''(x) + \mathcal{O}(\epsilon^6 W^{(6)}(x)),
\end{equation}
and
\begin{equation}
\label{expansion-2}
\tilde{V}_{\epsilon}'(w) =  (1+w)^{1+\epsilon^2} - 1 = w + \epsilon^2 (1+w) \log(1+w) + \mathcal{O}(\epsilon^4 (1+w) \log^2(1+w)),
\end{equation}
and finally integrate \eqref{DFA} with $\mu = \epsilon^2 \lambda$ twice in $x$ subject to the zero
boundary conditions for $W$ and its derivatives. Truncating at the leading order $\mathcal{O}(\epsilon^4)$,
we obtain the stationary log--KdV equation
\begin{equation}
\label{stationaryLogKdV}
\lambda W(x) =  \frac{1}{12} W''(x) + (1+W) \log(1+W), \quad x \in \mathbb{R}.
\end{equation}
By Proposition \ref{proposition-soliton} below, there exists a unique positive and even solution
$W_{\rm stat} \in H^{\infty}(\mathbb{R})$ to the stationary log--KdV equation (\ref{stationaryLogKdV})
with $\lambda > 1$. We are now ready to formulate the main result on the rigorous
justification of this formal approximation.

\begin{theo}
\label{theorem-stationary}
Set $\mu := \epsilon^2 \lambda$ with fixed $\epsilon$-independent parameter $\lambda > 1$.
There exist positive constants $\epsilon_0$ and $C_0$ such that for every $\epsilon \in (0,\epsilon_0)$,
there exists a unique even solution $w_{{\rm stat},\epsilon}$
to the differential advance-delay equation (\ref{DFA}) in $L^2(\mathbb{R}) \cap L^{\infty}(\mathbb{R})$
such that
\begin{equation}
\label{bound-major}
\sup_{z \in \mathbb{R}} | w_{{\rm stat},\epsilon}(z) - W_{\rm stat}(\epsilon z) | \leq C_0 \epsilon^{1/6},
\end{equation}
where $W_{\rm stat}$ is the unique positive and even solution to the stationary log--KdV equation
(\ref{stationaryLogKdV}).
Moreover, $w_{{\rm stat},\epsilon} \in H^{\infty}(\mathbb{R})$ and for every $k \in \mathbb{N}$,
there is a positive $\epsilon$-independent constant $C_k$ such that
\begin{equation}
\label{bound-major-derivatives}
\sup_{z \in \mathbb{R}} | \partial^k_z w_{{\rm stat},\epsilon}(z)
- \epsilon^k \partial_x^k W_{\rm stat}(\epsilon z) | \leq C_k \epsilon^{k+1/6}.
\end{equation}
\end{theo}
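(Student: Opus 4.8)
The plan is to work in Fourier space and set up a fixed-point equation for the remainder $w_{{\rm stat},\epsilon}(z) - W_{\rm stat}(\epsilon z)$. First I would substitute the decomposition $w(z) = W_{\rm stat}(\epsilon z) + \epsilon^{?} r(z)$ into the differential advance-delay equation \eqref{DFA}, expand the discrete Laplacian using \eqref{expansion-0} and the nonlinearity using \eqref{expansion-2}, and use the fact that $W_{\rm stat}$ solves the stationary log--KdV equation \eqref{stationaryLogKdV} exactly, so that the leading terms cancel. What remains is an equation of the form $(1+\mu) r'' - \mathcal{L}_\epsilon r = N_\epsilon(r) + \epsilon^{s} F_\epsilon$, where $\mathcal{L}_\epsilon$ is the linearization of the lattice operator around the ansatz, $F_\epsilon$ is the residual coming from the truncation of the two Taylor expansions (of order $\epsilon^4$ relative to the main balance, hence producing the order $\epsilon^{1/6}$ after dividing by $\epsilon^2$ and accounting for the loss in the inversion — this is where the curious exponent $1/6$ will come from), and $N_\epsilon$ collects the genuinely nonlinear terms, which are quadratic-and-higher in $r$ and small.

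Second, I would analyze the linearized operator. Writing $\widehat{\Delta f}(k) = -\Omega(k)^2 \widehat f(k)$ with $\Omega(k)^2 = 4\sin^2(k/2)$, the operator $-(1+\mu)k^2 + \Omega(k)^2 \tilde V_\epsilon''(0) = -(1+\epsilon^2\lambda)k^2 + (1+\epsilon^2)\Omega(k)^2$ is, for small $k$, $\approx -\epsilon^2 \lambda k^2 + \tfrac{1}{12} k^4 + \dots$, which after the rescaling $k = \epsilon \xi$ becomes $\epsilon^4(-\lambda \xi^2 + \tfrac{1}{12}\xi^2\cdot\xi^2 + \dots)$. So the symbol of the linearized stationary log--KdV operator $\lambda - \tfrac{1}{12}\partial_x^2 - p'(W_{\rm stat})$ appears at leading order, but with corrections. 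The key spectral input is that the linearized log--KdV operator is invertible on the even subspace (this should follow from, or be provable alongside, Proposition \ref{proposition-soliton}; the evenness constraint in the theorem kills the translational kernel). I would then show that $(1+\mu)\partial_z^2 - \mathcal{L}_\epsilon$, restricted to even functions, is invertible on the relevant Fourier-weighted spaces, with operator norm blowing up like a negative power of $\epsilon$ — this controlled blow-up is exactly what forces $\epsilon^{1/6}$ rather than a cleaner rate, since the residual $F_\epsilon$ must beat the inverse's norm.

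Third, with the linear inversion in hand, I would close a contraction-mapping argument for $r$ in a ball of radius $O(\epsilon^{1/6})$ in $L^2\cap L^\infty$ (or a Fourier space $L^1$-type algebra so that products are easy to estimate). The nonlinear terms $N_\epsilon(r)$ are estimated using that $\tilde V_\epsilon$ is $C^2$ near $W_{\rm stat}$, uniformly in $\epsilon$, since $W_{\rm stat}$ is bounded away from $-1$; this gives the a priori bound \eqref{apriori-bound} a posteriori. Uniqueness of the even solution in $L^2\cap L^\infty$ follows from the same contraction estimate. Finally, the regularity and derivative bounds \eqref{bound-major-derivatives} follow by bootstrapping: once $r$ is known to be small, each derivative of the advance-delay equation picks up a factor $\epsilon$ on the $W_{\rm stat}(\epsilon z)$ terms, and the Fourier representation of $w_{{\rm stat},\epsilon}$ shows exponential decay of $\widehat{w}_{{\rm stat},\epsilon}$, giving $H^\infty$ and the stated per-derivative rates.

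The main obstacle I expect is the linear step: establishing invertibility of the $\epsilon$-dependent advance-delay operator $(1+\mu)\partial_z^2 - \mathcal{L}_\epsilon$ on even functions with a precise, only-polynomially-bad control of the inverse as $\epsilon \to 0$. One has to separate the low-frequency region $|k| \lesssim \epsilon^\theta$ (where the operator is a perturbation of the log--KdV linearization and one uses its invertibility on the even subspace, being careful about how the perturbation $\Omega(k)^2$ vs.\ $-k^2$ and the $O(\epsilon^2)$ coefficient mismatches interact) from the high-frequency region (where $\Omega(k)^2$ stays bounded but $(1+\mu)k^2$ grows, so the symbol is comfortably invertible), and patch the two estimates together; optimizing the split exponent $\theta$ against the size of the residual is what produces the exponent $1/6$ in \eqref{bound-major}.
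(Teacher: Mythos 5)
Your proposal follows essentially the same route as the paper: a Fourier-space fixed-point formulation, a splitting at $|k|=\epsilon^p$ into a low-frequency block (where the linearized stationary log--KdV operator, invertible on even functions by Propositions \ref{proposition-operator} and \ref{proposition-fixed-point-operator}, governs the inversion) and a high-frequency block (where the symbol is uniformly invertible), with the exponent $1/6$ obtained by optimizing the split. The only bookkeeping differences worth noting are that the paper first slaves the high-frequency component to the low-frequency one by a contraction and then applies the implicit function theorem to the low-frequency equation alone, and that the loss producing $1/6=2/3-1/2$ comes not from any blow-up of the inverse (which is bounded uniformly in $\epsilon$) but from the factor $\| W_{\rm stat}(\epsilon\,\cdot)\|_{L^2}=\mathcal{O}(\epsilon^{-1/2})$ multiplying the relative residual sizes $\epsilon^{4p-2}$ and $\epsilon^{2-2p}$ at the optimal $p=2/3$.
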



\begin{rem}
It follows from analysis of the roots of the dispersion relation
associated with the differential advance-delay equation (\ref{DFA}) 
that $w$ decays to zero exponentially at infinity (see Section 5 in \cite{FP1}).
\end{rem}

Using the scaling transformation
$$
u_n(t) = -v_0 \left( 1 + w_n(t') \right), \quad t' = v_0^{\epsilon^2/2} t,
$$
we can write the FPU lattice in the (formally) equivalent form of the first-order system
\begin{equation} \label{FPU-lattice}
\left\{
\begin{split}
& \dot{w}_n = p_{n+1}-p_n, \\
& \dot{p}_n = \tilde{V}_{\epsilon}'(w_n) - \tilde{V}_{\epsilon}'(w_{n-1}),
\end{split}
\right.
\quad n \in \mathbb{Z}.
\end{equation}
Any $(w,p)\in C^1(\R,l^2(\Z))$ solution to the first-order system \eqref{FPU-lattice},
with $w_n>-1$ for all $n\in\mathbb{Z}$, provides a $C^2(\R,l^2(\Z))$ solution
$u$ to the scalar second-order equation \eqref{FPU}.
The FPU lattice equations (\ref{FPU-lattice}) admit the conserved energy
\begin{equation}
\label{FPU-energy}
H := \frac{1}{2} \sum_{n \in \mathbb{Z}} p_n^2 + \sum_{n \in \mathbb{Z}} \tilde{V}_{\epsilon}(w_n).
\end{equation}
Note that the dot in (\ref{FPU-lattice}) applies with respect to the new variable $t'$. In what follows,
we will use the same notation $t$ for the independent time variable of the FPU system (\ref{FPU-lattice})
for convenience.

Since shift operators are bounded in $l^2(\mathbb{Z})$, it is easy to show the local (in time)
well-posedness of the Cauchy problem associated with the FPU system \eqref{FPU-lattice} in $l^2(\mathbb{Z})^2$.
Furthermore, the energy conservation (\ref{FPU-energy})
and the embedding of $l^2(\mathbb{Z})$ in $l^{\infty}(\mathbb{Z})$ ensures global existence of the solutions,
at least for small initial data. For large initial data, any solution to \eqref{FPU-lattice}
provides a solution to \eqref{FPU} as long as all components of $u$ remain strictly negative, that is, as long as
\begin{equation}
\label{apriori-bound-time}
-1 < C_- \leq w_n(t) \leq C_+ < \infty, \quad \mbox{\rm for every} \; n \in \mathbb{N},
\end{equation}
where $C_{\pm}$ are $\epsilon$ and $t$-independent constants. It may be hard to control this condition
during evolution for general initial data, but our study addresses time-dependent solutions
near the travelling wave of Theorem \ref{theorem-stationary}, which definitely satisfies the
bounds (\ref{apriori-bound-time}). Let us emphasize once again that
the travelling waves and the solutions we consider are not small-amplitude
solutions to the FPU lattice \eqref{FPU-lattice}.

We define a reference travelling wave $(w_{\rm trav},p_{\rm trav}) \in C^1(\R,l^2(\mathbb{Z}))$
solution to the FPU lattice \eqref{FPU-lattice} by
\begin{equation}
\label{solitary-wave-FPU}
(w_{\rm trav})_n(t) = w_{\rm stat}(n - ct), \quad (p_{\rm trav})_n(t) = p_{\rm stat}(n-ct),
\end{equation}
where $c^2 = 1 + \epsilon^2 \lambda$ is the squared wave speed,
$w_{\rm stat}$ is given by Theorem \ref{theorem-stationary},
and $p_{\rm stat}$ is found from the advance equation
$-c w_{\rm stat}'(z) = p_{\rm stat}(z+1) - p_{\rm stat}(z)$.
We now ask if the travelling wave given by (\ref{solitary-wave-FPU})
is stable in the time evolution of the FPU lattice (\ref{FPU-lattice})
with small $\epsilon$ at least on the time scale of $\mathcal{O}(\epsilon^{-3})$,
when the approximation of the log--KdV equation is applicable.

The following theorem gives the affirmative answer to the question of the nonlinear stability
of the FPU travelling waves and specifies the precise conditions, in which the nonlinear stability
of the travelling wave is understood. In particular, this result ensures existence
of the time-dependent solution $(w,p)$ to the FPU lattice (\ref{FPU-lattice})
up to $\mathcal{O}(\epsilon^{-3})$ times.

\begin{theo}
\label{theorem-stability}
As in Theorem \ref{theorem-stationary}, set $\mu := \epsilon^2 \lambda$
with fixed $\epsilon$-independent parameter $\lambda > 1$.
For every $\tau_0 > 0$, there exist positive constants $\epsilon_0$, $\delta_0$ and $C_0$ such that,
for all $\epsilon \in (0,\epsilon_0)$, when initial data $(w_{{\rm ini},\epsilon},p_{{\rm ini},\epsilon})
\in l^2(\mathbb{R})$ satisfy
\begin{equation}
\label{bound-initial}
\delta := \| w_{{\rm ini},\epsilon} - w_{{\rm trav},\epsilon}(0) \|_{l^2} +
\| p_{{\rm ini},\epsilon} - p_{{\rm trav},\epsilon}(0) \|_{l^2} \leq \delta_0,
\end{equation}
then the unique solution $(w_{\epsilon},p_{\epsilon})$
to the FPU lattice equations (\ref{FPU-lattice}) with initial data $(w_{{\rm ini},\epsilon},p_{{\rm ini},\epsilon})$
belongs to $C^1([-\tau_0\epsilon^{-3},\tau_0\epsilon^{-3}],l^2(\mathbb{Z}))$ and satisfies
\begin{equation}
\label{bound-final}
\| w_{\epsilon}(t) - w_{{\rm trav},\epsilon}(t) \|_{l^2} +
\| p_{\epsilon}(t) - p_{{\rm trav},\epsilon}(t) \|_{l^2}
\leq C_0 \delta, \quad t \in \left[-\tau_0 \epsilon^{-3},\tau_0 \epsilon^{-3}\right].
\end{equation}
\end{theo}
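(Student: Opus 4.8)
The plan is to run a Lyapunov-type energy estimate for the perturbation of the reference travelling wave (\ref{solitary-wave-FPU}), together with a continuation argument that propagates the a priori bounds (\ref{apriori-bound-time}) along the flow. I would write the solution of (\ref{FPU-lattice}) as $w_n(t)=(w_{\rm trav})_n(t)+W_n(t)$, $p_n(t)=(p_{\rm trav})_n(t)+P_n(t)$, so that, with $g_n:=\tilde{V}_\epsilon'(w_n)-\tilde{V}_\epsilon'\big((w_{\rm trav})_n\big)$, the perturbation obeys $\dot W_n=P_{n+1}-P_n$ and $\dot P_n=g_n-g_{n-1}$. The main object is the energy-type functional
\[
\mathcal{E}(t):=\tfrac12\sum_{n\in\mathbb{Z}}P_n^2+\sum_{n\in\mathbb{Z}}\Big[\tilde{V}_\epsilon(w_n)-\tilde{V}_\epsilon\big((w_{\rm trav})_n\big)-\tilde{V}_\epsilon'\big((w_{\rm trav})_n\big)\,W_n\Big],
\]
modelled on the conserved energy (\ref{FPU-energy}). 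While the a priori bounds $-1<C_-\le w_n(t),(w_{\rm trav})_n(t)\le C_+<\infty$ hold, Taylor's theorem and the elementary estimates $\tfrac12\le\tilde{V}_\epsilon''(w)=(1+\epsilon^2)(1+w)^{\epsilon^2}\le2$ for $w\in[C_-,C_+]$ and $\epsilon$ small yield a two-sided bound $c_1(\|W\|_{l^2}^2+\|P\|_{l^2}^2)\le\mathcal{E}\le c_2(\|W\|_{l^2}^2+\|P\|_{l^2}^2)$ with $\epsilon$-independent $c_1,c_2>0$; in particular $\mathcal{E}(0)\le c_2\delta^2$ by (\ref{bound-initial}).

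The core computation is the time derivative of $\mathcal{E}$ along the flow: differentiating, using the equations for $(W,P)$, and summing by parts, all the terms proportional to $\dot W_n\,g_n$ cancel and one is left with
\[
\frac{d\mathcal{E}}{dt}=\sum_{n\in\mathbb{Z}}\Big[g_n-\tilde{V}_\epsilon''\big((w_{\rm trav})_n\big)\,W_n\Big]\,\partial_t(w_{\rm trav})_n.
\]
Two independent smallness effects then make this expression quadratic in $W$ and $\mathcal{O}(\epsilon^3)$ overall. First, $g_n-\tilde{V}_\epsilon''\big((w_{\rm trav})_n\big)W_n=\tfrac12\tilde{V}_\epsilon'''(\xi_n)W_n^2$ with $\xi_n\in[C_-,C_+]$, and $\tilde{V}_\epsilon'''(w)=\epsilon^2(1+\epsilon^2)(1+w)^{\epsilon^2-1}=\mathcal{O}(\epsilon^2)$ uniformly on $[C_-,C_+]$. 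Second, by Theorem \ref{theorem-stationary} --- in particular (\ref{bound-major-derivatives}) with $k=1$, which gives $\|w_{\rm stat}'\|_{L^\infty}\le\epsilon\|W_{\rm stat}'\|_{L^\infty}+C_1\epsilon^{7/6}=\mathcal{O}(\epsilon)$ --- together with $c=\sqrt{1+\epsilon^2\lambda}=\mathcal{O}(1)$, the travelling wave satisfies $\|\partial_t(w_{\rm trav})(t)\|_{l^\infty}\le c\,\|w_{\rm stat}'\|_{L^\infty}=\mathcal{O}(\epsilon)$, uniformly in $t$. It is crucial to estimate $\sum_nW_n^2\,|\partial_t(w_{\rm trav})_n|\le\|\partial_t(w_{\rm trav})\|_{l^\infty}\,\|W\|_{l^2}^2$ using the $l^\infty$-bound on $\partial_t(w_{\rm trav})$ (which is $\mathcal{O}(\epsilon)$) rather than its $l^1$-bound (only $\mathcal{O}(1)$), since only the former produces the extra power of $\epsilon$ needed below. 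Altogether,
\[
\Big|\frac{d\mathcal{E}}{dt}\Big|\le C\,\epsilon^3\,\|W(t)\|_{l^2}^2\le C'\,\epsilon^3\,\mathcal{E}(t).
\]

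Granting the a priori bounds on a time interval, Gronwall's inequality gives $\mathcal{E}(t)\le\mathcal{E}(0)\,e^{C'\epsilon^3|t|}\le e^{C'\tau_0}\,\mathcal{E}(0)$ for $|t|\le\tau_0\epsilon^{-3}$, and the two-sided bound on $\mathcal{E}$ then yields $\|W(t)\|_{l^2}+\|P(t)\|_{l^2}\le C_0\,\delta$ with $C_0=C_0(\tau_0)$, which is (\ref{bound-final}). The remaining --- and genuinely delicate --- point is to show that the a priori bounds (\ref{apriori-bound-time}) do persist on $[-\tau_0\epsilon^{-3},\tau_0\epsilon^{-3}]$, which I would handle by a standard bootstrap, being careful about the order of the quantifiers. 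By Theorem \ref{theorem-stationary} the range of $w_{\rm stat}$ is, for $\epsilon$ small, contained in a fixed compact subinterval of $(-1,\infty)$; enlarging it slightly fixes $\epsilon$- and $t$-independent $C_\pm$. Local well-posedness of (\ref{FPU-lattice}) in $l^2(\mathbb{Z})^2$ gives a $C^1$ solution on a maximal interval on which (\ref{apriori-bound-time}) holds; on that interval the estimate above together with $l^2(\mathbb{Z})\hookrightarrow l^\infty(\mathbb{Z})$ gives $\|W(t)\|_{l^\infty}\le C_0(\tau_0)\,\delta_0$, so if $\delta_0=\delta_0(\tau_0)$ is chosen small enough that $C_0(\tau_0)\,\delta_0$ stays below the margin from the range of $w_{\rm trav}$ to the endpoints of $[C_-,C_+]$, then $w_n(t)=(w_{\rm trav})_n(t)+W_n(t)$ remains strictly interior to $[C_-,C_+]$, and a continuity (open--closed) argument forces the maximal interval to contain $[-\tau_0\epsilon^{-3},\tau_0\epsilon^{-3}]$. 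The loop closes only because the $\epsilon^3$ gain in $d\mathcal{E}/dt$ makes $e^{C'\tau_0}$ --- hence $C_0(\tau_0)$ --- independent of $\epsilon$; this is also why the statement is confined to time scales $\mathcal{O}(\epsilon^{-3})$. The main obstacle, therefore, is precisely this circular dependence between the energy estimate and the a priori bounds, resolved by the $\epsilon^3$-smallness of $d\mathcal{E}/dt$.
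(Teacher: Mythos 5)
Your proposal is correct and follows essentially the same route as the paper: both rest on expanding the conserved FPU energy (\ref{FPU-energy}) about the travelling wave, exploiting the cancellation of the terms linear in the perturbation, and extracting the crucial $\mathcal{O}(\epsilon^3)$ smallness of the energy drift from $\tilde{V}_\epsilon'''=\mathcal{O}(\epsilon^2)$ together with the $L^\infty$ bound $\|w_{\rm stat}'\|_{L^\infty}=\mathcal{O}(\epsilon)$ supplied by (\ref{bound-major-derivatives}) with $k=1$, followed by Gronwall and a bootstrap on the a priori bounds (\ref{apriori-bound-time}). The only (cosmetic) difference is that you work directly with the relative energy $\mathcal{E}=H-H_0-H_1$ and an exact second-order Taylor remainder, whereas the paper tracks $H_1$, $H_2$ and the cubic remainders $H_R$, $S_R$ separately; the estimates and conclusions are identical.
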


\begin{rem}
According to \cite{H11}, the solitary wave $W$ of the stationary log--KdV equation (\ref{stationaryLogKdV}) is orbitally stable
in the time evolution of the log--KdV equation
\begin{equation}
\label{LogKdV-background}
2 W_{\tau} + \frac{1}{12} W_{\xi \xi \xi} + (g(W))_{\xi} = 0, \quad g(W) := (1+W) \log(1+W),
\end{equation}
where $\tau = \epsilon^3 t$ and $\xi = \epsilon (n-t)$ are scaled variables of the FPU lattice (\ref{FPU-lattice}).
From Theorem \ref{theorem-stability} and this orbital stability result,
one can expect that the time-dependent version of the log--KdV
equation (\ref{LogKdV-background}) is a valid approximation of the time-dependent solutions
to the FPU lattice (\ref{FPU-lattice}) modulated on the spatial scale $\mathcal{O}(\epsilon^{-1})$
up to the time scale of $\mathcal{O}(\epsilon^{-3})$.
\end{rem}

\begin{rem}
Compared to the log--KdV equation (\ref{LogKdV-background}),  Theorem \ref{theorem-stability} gives
also stability of the FPU travelling waves with respect to modulations on any other spatial scale,
nevertheless, up to the time scale of $\mathcal{O}(\epsilon^{-3})$ only.
\end{rem}

Finally, we justify the approximation of time-dependent solutions
to the FPU lattice (\ref{FPU-lattice}) by the log--KdV equation \eqref{LogKdV-background}.
Technically, when a solution $W$ to \eqref{LogKdV-background} is given, we define
\begin{equation} \label{approxmomentum}
P_\epsilon := -W + \frac{\epsilon}{2} W_{\xi} - \frac{\epsilon^2}{8} W_{\xi \xi}
- \frac{\epsilon^2}{2} g(W) + \frac{\epsilon^3}{48} W_{\xi \xi \xi}
+ \frac{\epsilon^3}{4} (g(W))_{\xi},
\end{equation}
so that $(W,P_\epsilon)$ solves the first equation in \eqref{FPU-lattice} up to $\mathcal{O}(\epsilon^4)$ terms.
The following theorem controls the approximation error up to $\mathcal{O}(\epsilon^{-3})$ times.

\begin{theo}
\label{theorem-justification}
Let $W \in C([-\tau_0,\tau_1],H^s(\mathbb{R}))$ be a solution to the log--KdV equation
\eqref{LogKdV-background} for some integer $s \geq 6$ and some $\tau_0,\tau_1\geq0$.
Assume that there exists $r_W>-1$ such that $W\geq r_W$.
Then there exist positive constants $\epsilon_0$ and $C_0$ such that,
for all $\epsilon \in (0,\epsilon_0)$, when initial data $(w_{{\rm ini},\epsilon},p_{{\rm ini},\epsilon})
\in l^2(\mathbb{R})$ are given such that
\begin{equation}
\label{bound-initial-time}
\| w_{{\rm ini},\epsilon} - W(\epsilon \cdot,0) \|_{l^2} +
\| p_{{\rm ini},\epsilon} - P_\epsilon(\epsilon \cdot,0) \|_{l^2} \leq \epsilon^{3/2},
\end{equation}
with $P_\epsilon$ given by \eqref{approxmomentum},
the unique solution $(w_{\epsilon},p_{\epsilon})$
to the FPU lattice equations (\ref{FPU-lattice}) with initial data $(w_{{\rm ini},\epsilon},p_{{\rm ini},\epsilon})$
belongs to $C^1([-\tau_0\epsilon^{-3},\tau_1\epsilon^{-3}],l^2(\mathbb{Z}))$ and satisfies
\begin{equation}
\label{bound-final-time}
\| w_{\epsilon}(t) - W(\epsilon (\cdot - t), \epsilon^3 t) \|_{l^2} +
\| p_{\epsilon}(t) - P_\epsilon(\epsilon (\cdot - t), \epsilon^3 t)  \|_{l^2}
\leq C_0 \epsilon^{3/2}, \quad t \in \left[-\tau_0 \epsilon^{-3},\tau_1 \epsilon^{-3}\right].
\end{equation}
\end{theo}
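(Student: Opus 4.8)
The plan is to extend the energy method of Theorem~\ref{theorem-stability}, now measuring the distance of the lattice solution from the modulated log--KdV profile rather than from an exact travelling wave. Write $(\hat{w}_n(t),\hat{p}_n(t)):=(W(\epsilon(n-t),\epsilon^3 t),\,P_\epsilon(\epsilon(n-t),\epsilon^3 t))$ for the approximate solution, $\hat u=(\hat w,\hat p)$, and $(\psi,\phi):=(w_\epsilon-\hat{w},\,p_\epsilon-\hat{p})$ for the error; then \eqref{bound-initial-time} reads $\|\psi(0)\|_{l^2}+\|\phi(0)\|_{l^2}\leq\epsilon^{3/2}$ and \eqref{bound-final-time} is the claim that this smallness propagates, up to an $\epsilon$-independent constant, over $t\in[-\tau_0\epsilon^{-3},\tau_1\epsilon^{-3}]$. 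The scheme is a consistency-plus-stability argument: (i) bound the residual obtained by inserting $(\hat w,\hat p)$ into \eqref{FPU-lattice}; (ii) build a coercive Lyapunov functional from the conserved energy \eqref{FPU-energy} whose time derivative is controlled by the residual and by genuinely small terms; (iii) close by a continuity/bootstrap argument over the $\mathcal{O}(\epsilon^{-3})$ time scale.

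\emph{Consistency.} Substituting $(\hat w,\hat p)$ into \eqref{FPU-lattice} produces residuals $\mathrm{Res}^{(1)}_n:=\dot{\hat w}_n-(\hat p_{n+1}-\hat p_n)$ and $\mathrm{Res}^{(2)}_n:=\dot{\hat p}_n-(\tilde{V}_\epsilon'(\hat w_n)-\tilde{V}_\epsilon'(\hat w_{n-1}))$. Expanding the shifts $W(\xi\pm\epsilon)$, $P_\epsilon(\xi\pm\epsilon)$ in Taylor series, using $\tilde{V}_\epsilon'(w)=w+\epsilon^2 g(w)+\mathcal{O}(\epsilon^4)$ as in \eqref{expansion-2} (with $g(w)=(1+w)\log(1+w)$), and eliminating $W_\tau$ via \eqref{LogKdV-background}, the terms of orders $\epsilon,\epsilon^2,\epsilon^3$ cancel by the very choice of $P_\epsilon$, and one checks that the $\epsilon^4$-contributions to both residuals cancel as well --- partly because of the precise numerical coefficients in \eqref{approxmomentum}, partly again because $W$ solves \eqref{LogKdV-background}. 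Hence $\mathrm{Res}^{(j)}_n=\mathcal{O}(\epsilon^5)$ pointwise in $n$, with implied constant polynomial in $\|W\|_{C([-\tau_0,\tau_1],H^s)}$; the hypotheses $s\geq 6$ and $W\geq r_W>-1$ enter here, respectively, to control the (sixth-order) Taylor remainders after the substitution of \eqref{LogKdV-background} and to keep $g(W)$, $\tilde{V}_\epsilon'(\hat w_n)$ and their $\xi$-derivatives bounded. Since $\hat w_n,\hat p_n$ are sampled from profiles modulated on the scale $\epsilon^{-1}$, it follows that $\|\mathrm{Res}^{(j)}(t)\|_{l^2}\leq C\epsilon^{9/2}$ uniformly on the interval. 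This improvement from $\epsilon^4$ to $\epsilon^5$ (equivalently, from $\epsilon^{7/2}$ to $\epsilon^{9/2}$ in $l^2$) is exactly what lets the residual survive integration over a time of length $\mathcal{O}(\epsilon^{-3})$.

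\emph{Lyapunov functional.} With $H$ the conserved energy \eqref{FPU-energy}, set
\[
\mathcal{E}(t):=H(w_\epsilon(t),p_\epsilon(t))-H(\hat w(t),\hat p(t))-\big\langle DH(\hat u(t)),\,(\psi(t),\phi(t))\big\rangle .
\]
Because $H$ is exactly conserved along the FPU flow and the flow is Hamiltonian, a direct computation (using skew-symmetry of the Poisson structure $J$, so that $\langle DH(\hat u),JDH(\hat u)\rangle=0$) shows that the a priori dangerous term $\langle DH(\hat u),\mathrm{Res}\rangle$ --- which alone is only $\mathcal{O}(\epsilon^4)$, hence $\mathcal{O}(\epsilon)$ after integration, because $DH(\hat u)$ has $l^2$-size $\mathcal{O}(\epsilon^{-1/2})$ --- drops out, leaving
\[
\frac{d}{dt}\mathcal{E}=-\big\langle D^2H(\hat u)\,\mathrm{Res},\,(\psi,\phi)\big\rangle-\big\langle DH(\hat u),\,J\,N(\psi)\big\rangle ,\qquad \mathrm{Res}=(\mathrm{Res}^{(1)},\mathrm{Res}^{(2)}),
\]
where $N(\psi)_n=\tfrac12\tilde{V}_\epsilon'''(\theta_n)\psi_n^2$ is the quadratic Taylor remainder of $\tilde{V}_\epsilon'$. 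The first term is $\lesssim\|\mathrm{Res}\|_{l^2}(\|\psi\|_{l^2}+\|\phi\|_{l^2})\lesssim\epsilon^{9/2}(\|\psi\|_{l^2}+\|\phi\|_{l^2})$; the second, after a discrete summation by parts, equals $-\tfrac12\sum_n(\hat p_{n+1}-\hat p_n)\tilde{V}_\epsilon'''(\theta_n)\psi_n^2$, which is $\lesssim\epsilon^3(\|\psi\|_{l^2}+\|\phi\|_{l^2})^2$ since $\hat p_{n+1}-\hat p_n=\mathcal{O}(\epsilon)$ and $\tilde{V}_\epsilon'''(w)=(1+\epsilon^2)\epsilon^2(1+w)^{\epsilon^2-1}=\mathcal{O}(\epsilon^2)$, i.e. because the lattice is nearly harmonic as $\epsilon\to0$. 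Finally $\mathcal{E}$ is coercive: on the range occupied by $w_\epsilon$ --- a compact subinterval of $(-1,\infty)$, by the bootstrap below and the bound $W\geq r_W$ --- the quantity $\tilde{V}_\epsilon''(w)=(1+\epsilon^2)(1+w)^{\epsilon^2}$ is bounded above and below uniformly in $\epsilon$, so Taylor's formula with integral remainder gives $\mathcal{E}(t)\sim\|\psi(t)\|_{l^2}^2+\|\phi(t)\|_{l^2}^2$. This is where the argument avoids any spectral information on the linearized log--KdV operator: positivity of $\tilde{V}_\epsilon''$ is automatic and holds at arbitrary amplitude, precisely because precompression keeps the solution away from the singular point of the nonlinearity.

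\emph{Closure.} Run a continuity argument: as long as $\|\psi(t)\|_{l^2}+\|\phi(t)\|_{l^2}\leq 2C_0\epsilon^{3/2}$ --- so that $w_\epsilon(t)>-1$ (hence the solution of \eqref{FPU-lattice} is a genuine solution of \eqref{FPU} and does not leave $l^2(\Z)^2$), and $\mathcal{E}\sim\|(\psi,\phi)\|_{l^2}^2$ --- the two estimates above give the differential inequality $\tfrac{d}{dt}\mathcal{E}\leq C\epsilon^{9/2}\sqrt{\mathcal{E}}+C\epsilon^3\mathcal{E}$, hence $\tfrac{d}{dt}\sqrt{\mathcal{E}}\leq C\epsilon^{9/2}+C\epsilon^3\sqrt{\mathcal{E}}$ and, by Gronwall, $\sqrt{\mathcal{E}(t)}\leq e^{C\epsilon^3|t|}\sqrt{\mathcal{E}(0)}+C\epsilon^{3/2}(e^{C\epsilon^3|t|}-1)$. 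For $|t|\leq\max(\tau_0,\tau_1)\epsilon^{-3}$ the exponential is bounded by an $\epsilon$-independent constant and $\sqrt{\mathcal{E}(0)}\lesssim\|(\psi(0),\phi(0))\|_{l^2}\leq\epsilon^{3/2}$, whence $\|\psi(t)\|_{l^2}+\|\phi(t)\|_{l^2}\leq C_0\epsilon^{3/2}$ with $C_0=C_0(\tau_0,\tau_1,\|W\|_{C([-\tau_0,\tau_1],H^s)},r_W)$; for $\epsilon_0$ small this strictly improves the bootstrap assumption and, together with local well-posedness of \eqref{FPU-lattice} in $l^2(\Z)^2$, produces a solution on $[-\tau_0\epsilon^{-3},\tau_1\epsilon^{-3}]$ satisfying \eqref{bound-final-time}. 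The main obstacle is the consistency step: one must verify that the $\epsilon^4$-parts of \emph{both} residuals genuinely vanish, so that $\|\mathrm{Res}^{(j)}\|_{l^2}=\mathcal{O}(\epsilon^{9/2})$ rather than merely $\mathcal{O}(\epsilon^{7/2})$ --- an $\mathcal{O}(\epsilon^{7/2})$ residual would propagate only an $\mathcal{O}(\epsilon^{1/2})$ error over the $\mathcal{O}(\epsilon^{-3})$ time scale, far short of \eqref{bound-final-time}. This forces the careful bookkeeping of the Taylor expansions that dictates the precise correction terms in \eqref{approxmomentum} and the regularity threshold $s\geq6$.
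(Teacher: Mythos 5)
Your proposal is correct and follows the same overall architecture as the paper's proof: the same modulated-profile decomposition with the corrector $P_\epsilon$ of \eqref{approxmomentum}, the same key consistency fact that both residuals are $\mathcal{O}(\epsilon^5)$ pointwise and hence $\mathcal{O}(\epsilon^{9/2})$ in $l^2$ (the paper isolates the $\epsilon^{-1/2}$ sampling loss as Lemma~\ref{Hs-to-l2}, which you use implicitly when passing from profiles to sequences --- this step does require proof, but your quantitative claim is the right one, and the $\epsilon^4$-cancellation in ${\rm Res}^{(2)}$ that you flag as the main obstacle does indeed hold), and the same Gronwall-plus-bootstrap closure over times $\mathcal{O}(\epsilon^{-3})$. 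Where you genuinely diverge is the choice of Lyapunov functional. The paper works with the explicit quadratic form $\mathcal{E}=\frac12\sum_n[\mathcal{P}_n^2+\mathcal{W}_n^2+\epsilon^2 g'(W)\mathcal{W}_n^2]$ and differentiates it directly along the error equations \eqref{FPU-lattice-time}; you instead take the second-order Taylor remainder $H(u)-H(\hat u)-\langle DH(\hat u),u-\hat u\rangle$ of the conserved Hamiltonian, in the spirit of the proof of Theorem~\ref{theorem-stability}. Your identity $\frac{d}{dt}\mathcal{E}=-\langle D^2H(\hat u)\,{\rm Res},(\psi,\phi)\rangle-\langle DH(\hat u),JN(\psi)\rangle$ checks out, and it buys you something concrete: the cubic-in-error contribution gets paired with $(S-1)\hat p=\mathcal{O}(\epsilon)$ and $\tilde V_\epsilon'''=\mathcal{O}(\epsilon^2)$, yielding the differential inequality $\frac{d}{dt}\mathcal{E}\lesssim\epsilon^{9/2}\mathcal{E}^{1/2}+\epsilon^3\mathcal{E}$ with no $\epsilon^2\mathcal{E}^{3/2}$ term, whereas the paper's version carries such a term (harmless there only because the bootstrap keeps $\mathcal{E}^{1/2}=\mathcal{O}(\epsilon^{3/2})$). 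The price is that your route leans on the Hamiltonian/symplectic bookkeeping and on exact conservation of $H$, while the paper's explicit quadratic form is more elementary and makes the coercivity constants visible at a glance; both deliver \eqref{bound-final-time} with the same constants' dependence on $\tau_0,\tau_1,r_W$ and $\|W\|_{C([-\tau_0,\tau_1],H^s)}$.
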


\begin{rem}
The Cauchy problem associated with the log--KdV equation (\ref{logKdV}) is not understood
in full generality: global solutions in some subspace of $H^1$
are constructed in \cite{CP14}, but the question of propagation of regularity remains open.
However, the classical approach (see for example Kato \cite{Kato}) allows to construct
short-time solutions with $H^s$ regularity, $s>3/2$, given initial data satisfying
a lower bound as in the assumptions of Theorem~\ref{theorem-justification}, namely $W\geq r_W >-1$
(in the neighborhood of which the nonlinearity $g$ is smooth).
\end{rem}

\begin{rem}
Using higher order asymptotic expansions and $\epsilon^K$-close initial data,
the approximation in \eqref{bound-final-time} could be improved to be $\mathcal{O}(\epsilon^K)$,
for any $K\in\N$ (see Remark~\ref{higherorder} below).
\end{rem}

\begin{rem}
Even if the travelling wave solution $W = W_{\rm stat}(\xi-\lambda\tau/2)$
to the log--KdV equation \eqref{LogKdV-background} is used
in bounds (\ref{bound-initial-time}) and (\ref{bound-final-time}),
where $W_{\rm stat}$ is a solution to the stationary log--KdV equation \eqref{stationaryLogKdV},
the results of Theorems \ref{theorem-stationary} and \ref{theorem-justification} do not
recover the result of Theorem \ref{theorem-stability}, because the small parameter $\delta$ in
Theorem \ref{theorem-stability} does not depend on the small parameter $\epsilon$.
\end{rem}

\section{Justification analysis for travelling waves}

Adopting the Fourier transform on $L^2(\mathbb{R})$ functions
\begin{eqnarray*}
\hat{w}(k) = \mathcal{F}(w)(k) := \int_{-\infty}^{\infty} w(z) e^{-ikz} dz
\end{eqnarray*}
with the inverse Fourier transform
\begin{eqnarray*}
w(z) = \mathcal{F}^{-1}(\hat{w})(z) := \frac{1}{2\pi} \int_{-\infty}^{\infty} \hat{w}(k) e^{ikz} dk,
\end{eqnarray*}
we can rewrite the existence problem (\ref{DFA}) as the fixed-point equation
\begin{equation}
\label{fixed-point}
w(z) = \frac{1}{1 + \mu} \int_{-1}^1 \Lambda(y) \tilde{V}_{\epsilon}'(w(z-y)) dy, \quad z \in \mathbb{R},
\end{equation}
where $\Lambda(z) = (1-|z|)_+$ is the hat function, or in the equivalent Fourier form
\begin{equation}
\label{fixed-point-Fourier}
\hat{w}(k) = \frac{1}{1 + \mu} \hat{\Lambda}(k) \mathcal{F}(\tilde{V}_{\epsilon}'(w))(k), \quad k \in \mathbb{R},
\end{equation}
where $\hat{\Lambda}(k) := \frac{4}{k^2} \sin^2\left(\frac{k}{2}\right)$.
This section presents the proof of Theorem \ref{theorem-stationary}, after several auxiliary results will be obtained.

\subsection{Nonzero solutions to the fixed-point equation (\ref{fixed-point})}

We shall first investigate if nonzero solutions to the fixed-point equation (\ref{fixed-point})
exist for $\mu = \mathcal{O}(\epsilon^2)$.
Therefore, we set $\mu := \epsilon^2 \lambda$ with an $\epsilon$-independent parameter $\lambda$.
The following proposition shows that, when $\lambda>1$ is fixed and $R>0$ is small enough,
there is no solution to the fixed-point equation \eqref{fixed-point} with norm in $L^2 \cap L^\infty$ less than $R$
other than the trivial (zero) solution.

\begin{prop}
\label{proposition-zero}
Set $\mu := \epsilon^2 \lambda$. For every $R>0$, there exists $\lambda_R > 1$
such that for all $\lambda > \lambda_R$ and all $\epsilon \in (0,1)$
the only solution to the fixed-point equation (\ref{fixed-point}) in
\begin{equation}
\label{ball-BR}
B_R := \{ w \in L^2(\mathbb{R}) \cap L^{\infty}(\mathbb{R}) : \;\; \| w \|_{L^2 \cap L^{\infty}}
\leq R, \quad w \geq 0 \}
\end{equation}
is the trivial zero solution. Furthermore, $\lambda_R$ may be chosen so that
$\lambda_R \Tend{R}{0} 1$.
\end{prop}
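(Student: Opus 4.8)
The plan is to bound the nonlinear part of the fixed-point map and show it is a strict contraction relative to the linear coefficient $\tfrac{1}{1+\mu}$ when $\lambda$ is large. Write the fixed-point equation \eqref{fixed-point} as $w = \tfrac{1}{1+\mu} \mathcal{L}(\tilde{V}'_\epsilon(w))$, where $\mathcal{L} f = \Lambda * f$ on the strip $[-1,1]$, and split $\tilde{V}'_\epsilon(w) = w + N_\epsilon(w)$ with $N_\epsilon(w) := (1+w)^{1+\epsilon^2} - 1 - w$. The key elementary fact is that for $w \geq 0$ and $0 < \epsilon < 1$ one has $|N_\epsilon(w)| \leq C \epsilon^2 (1+w) \log(1+w) \leq C' \epsilon^2 w (1 + w)$ pointwise (using $\epsilon^2 \log(1+w) \leq (1+w)$ and a Taylor remainder in $\epsilon^2$); more usefully for a ball of radius $R$, $|N_\epsilon(w)| \leq \epsilon^2 \kappa(R)\, |w|$ with $\kappa(R) \to 0$ as $R \to 0$ when $w \geq 0$, $\|w\|_{L^\infty} \leq R$. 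Since $\int_{-1}^1 \Lambda = 1$, the operator $\mathcal{L}$ has norm $1$ on both $L^2$ and $L^\infty$ (indeed $\widehat{\Lambda}(k) = \tfrac{4}{k^2}\sin^2(k/2) \in [0,1]$), so $\mathcal{L}$ is a contraction-or-isometry on $L^2 \cap L^\infty$ with norm $1$.

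First I would observe that if $w \in B_R$ solves \eqref{fixed-point}, then
\[
\|w\|_{L^2 \cap L^\infty} = \frac{1}{1+\mu}\, \| \mathcal{L}(w + N_\epsilon(w)) \|_{L^2 \cap L^\infty}
\leq \frac{1}{1+\epsilon^2 \lambda} \big( \|w\|_{L^2 \cap L^\infty} + \|N_\epsilon(w)\|_{L^2 \cap L^\infty} \big).
\]
For the nonlinear term, using the pointwise bound $|N_\epsilon(w)(z)| \leq \epsilon^2 \kappa(R) |w(z)|$ valid for $w \in B_R$, we get $\|N_\epsilon(w)\|_{L^2 \cap L^\infty} \leq \epsilon^2 \kappa(R) \|w\|_{L^2 \cap L^\infty}$. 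Substituting and rearranging,
\[
\big( 1 + \epsilon^2 \lambda \big) \|w\|_{L^2 \cap L^\infty} \leq \big( 1 + \epsilon^2 \kappa(R) \big) \|w\|_{L^2 \cap L^\infty},
\]
i.e. $\epsilon^2 (\lambda - \kappa(R)) \|w\|_{L^2 \cap L^\infty} \leq 0$. Hence if $\lambda > \kappa(R) =: \lambda_R$, we conclude $\|w\|_{L^2 \cap L^\infty} = 0$, so $w \equiv 0$. Since $\kappa(R) \to 0$ as $R \to 0$, we may enlarge $\lambda_R$ to $\max(\kappa(R), 1) + \text{(something} \to 0)$ to ensure both $\lambda_R > 1$ and $\lambda_R \to 1$ as $R \to 0$; a clean choice is $\lambda_R := 1 + \kappa(R)$, which exceeds $1$ and tends to $1$.

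The main obstacle is the pointwise estimate $|N_\epsilon(w)| \leq \epsilon^2 \kappa(R)|w|$ with $\kappa(R) \to 0$, which must be proved carefully because $w(z)$ ranges over $[0,R]$ with $R$ a fixed (not small) constant in general — although here we only need it for small $R$. The cleanest route is to write $N_\epsilon(w) = \int_0^{\epsilon^2} \partial_s\big[(1+w)^{1+s}\big]\,ds - \big((1+w)\text{-linear part}\big)$; more directly, $(1+w)^{1+\epsilon^2} - 1 = w + \epsilon^2 \int_0^1 (1+w)^{1+t\epsilon^2}\log(1+w)\,dt$, so $N_\epsilon(w) = \epsilon^2 \int_0^1 (1+w)^{1+t\epsilon^2} \log(1+w)\,dt$, and for $0 \leq w \leq R$, $0 < \epsilon < 1$ this is bounded by $\epsilon^2 (1+R)^2 \log(1+R) \leq \epsilon^2 (1+R)^2 R$. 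Dividing by $w$ gives $|N_\epsilon(w)| / |w| \leq \epsilon^2 (1+R)^2 \frac{\log(1+w)}{w} \leq \epsilon^2 (1+R)^2$, and in fact $\frac{\log(1+w)}{w} \leq \frac{\log(1+R)}{R}\big|_{\text{monotone? no}} $ — one uses instead $\log(1+w) \leq w$ to get the simpler and sufficient bound $|N_\epsilon(w)| \leq \epsilon^2(1+R)^2 |w|$, then notes this does \emph{not} tend to $0$; to get $\kappa(R) \to 0$ one instead keeps the $\log$: $|N_\epsilon(w)| \leq \epsilon^2 (1+R)^2 w \log(1+w) \cdot \tfrac{1}{w} \cdot w$ — i.e. $\kappa(R) = (1+R)^2 \sup_{0 \leq w \leq R} \tfrac{\log(1+w)}{w} \cdot \tfrac{w}{1} $; since $\tfrac{\log(1+w)}{w} \to 1$ as $w \to 0$ this does not vanish either, so the honest statement is that one bounds $N_\epsilon(w) \leq \epsilon^2(1+R)^2 w \cdot \log(1+w)$ and then uses $\log(1+w) \leq \log(1+R)$, giving $\kappa(R) = (1+R)^2 \log(1+R) \to 0$ as $R \to 0$. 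This is the choice I would adopt. The remaining steps — the $L^2 \cap L^\infty$ boundedness of $\Lambda *$, and the arithmetic rearrangement — are routine.
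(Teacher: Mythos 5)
Your overall strategy is sound and is essentially a streamlined version of the paper's argument: the paper bounds $\tilde V_\epsilon'(w)$ by $(1+\epsilon^2)(1+\|w\|_{L^\infty})^{\epsilon^2}\|w\|_{L^2}$ and runs the contraction mapping principle on $B_R$ (with $0$ as the unique fixed point), whereas you split off the linear part and rearrange to get a direct a priori estimate forcing $\|w\|=0$. That rearrangement is legitimate and even avoids the Lipschitz estimate. However, your final quantitative claim is false, and it is the one the whole proof rests on. You assert $|N_\epsilon(w)|\leq \epsilon^2\kappa(R)|w|$ with $\kappa(R)=(1+R)^2\log(1+R)\to 0$. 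This cannot hold: from $N_\epsilon(w)=(1+w)^{1+\epsilon^2}-1-w=\epsilon^2 w+\mathcal{O}(w^2)$ as $w\to 0^+$, one has $N_\epsilon(w)/(\epsilon^2 w)\to 1$, so any valid $\kappa(R)$ must satisfy $\kappa(R)\geq 1$; equivalently, the inequality $\log(1+w)\leq \log(1+R)\,w$ that you implicitly invoke fails near $w=0$ whenever $\log(1+R)<1$. So as written the key estimate is wrong precisely in the regime ($R$ small) where the proposition's refinement $\lambda_R\to 1$ is supposed to bite.

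The good news is that you do not need $\kappa(R)\to 0$; you need $\kappa(R)\to 1$, and your own intermediate bound already gives this. From your identity $N_\epsilon(w)=\epsilon^2\int_0^1(1+w)^{1+t\epsilon^2}\log(1+w)\,dt$ and $\log(1+w)\leq w$, you get $|N_\epsilon(w)|\leq \epsilon^2(1+R)^2\,|w|$ for $0\leq w\leq R$, $\epsilon\in(0,1)$, i.e. $\kappa(R)=(1+R)^2$. Your rearrangement then yields $\epsilon^2\bigl(\lambda-(1+R)^2\bigr)\|w\|_{L^2\cap L^\infty}\leq 0$, so setting $\lambda_R:=(1+R)^2$ (not $1+\kappa(R)$, which would tend to $2$) gives $\lambda_R>1$, $\lambda_R\to 1$ as $R\to 0$, and $w\equiv 0$ for all $\lambda>\lambda_R$. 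With that one-line correction the proof is complete and slightly more elementary than the paper's contraction argument; note for comparison that the paper's choice $\lambda_R=1+2C_R\log(1+R)$ plays exactly the role of your corrected $(1+R)^2$.
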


\begin{proof}
We write
$$
\tilde{V}_{\epsilon}'(w) = (1+w)^{1+\epsilon^2} - 1^{1+\epsilon^2} = (1+\epsilon^2) \int_0^w (1 + x)^{\epsilon^2} dx.
$$
Let $A_{\lambda,\epsilon}(w)$ denote the right-hand size of
the fixed-point equation (\ref{fixed-point}). Since $\| \Lambda \|_{L^1} = 1$ and
$\| \Lambda \|_{L^2} = \frac{\sqrt{2}}{\sqrt{3}} < 1$, we apply Young's inequality and obtain
\begin{eqnarray*}
\| A_{\lambda,\epsilon}(w) \|_{L^2 \cap L^{\infty}} & \leq & \frac{1}{1 + \epsilon^2 \lambda} \| \Lambda \|_{L^1 \cap L^2}
\|\tilde{V}_{\epsilon}'(w) \|_{L^2} \\
& \leq & \frac{1 + \epsilon^2}{1 + \epsilon^2 \lambda} (1 + \| w \|_{L^{\infty}})^{\epsilon^2} \| w \|_{L^2}
\end{eqnarray*}
Consider the ball given by (\ref{ball-BR}) of positive functions in $L^2(\mathbb{R}) \cap L^{\infty}(\mathbb{R})$ centered at zero
with the radius $R > 0$, denoted by $B_R$. If $R$ is fixed, there exists an $\epsilon$-independent constant $C_R$ such that
$$
(1 + \| w \|_{L^{\infty}})^{\epsilon^2} \leq 1 + C_R \epsilon^2 \log(1+R),
\quad \mbox{\rm for every } \epsilon \in (0,1).
$$
Furthermore, $C_R$ may be chosen so that $C_R \Tend{R}{0} 1$.

For $\lambda \geq \lambda_R := 1 + 2 C_R \log(1+R)$, we have $A_{\lambda,\epsilon} : B_R \to B_{R}$.
Moreover, using similar bounds
\begin{eqnarray*}
\| A_{\lambda,\epsilon}(w_1) - A_{\mu,\epsilon}(w_2) \|_{L^2} & \leq & \frac{1}{1 + \epsilon^2 \lambda} \| \Lambda \|_{L^1} \|\tilde{V}_{\epsilon}'(w_1) - \tilde{V}_{\epsilon}'(w_2)\|_{L^2} \\
& \leq & \frac{1 + \epsilon^2}{1 + \epsilon^2 \lambda} (1 + \max\{ \| w_1 \|_{L^{\infty}},\| w_2 \|_{L^{\infty}}\})^{\epsilon^2} \| w_1 - w_2\|_{L^2}\\
& \leq & \frac{1 + \epsilon^2}{1 + \epsilon^2 \lambda} (1 + C_R \epsilon^2 \log(1 + R)) \| w_1 - w_2\|_{L^2},
\end{eqnarray*}
we have the desired contraction property for the operator $A_{\lambda,\epsilon} : B_R \to B_R$
if $\lambda > \lambda_R$. Since $A_{\mu,\epsilon}(0) = 0$, the contraction principle
guarantees that the trivial solution $w = 0$ is the only fixed point of $A_{\lambda,\epsilon}$ in the set $B_R$.
\end{proof}

Next we set $\mu := \epsilon^2 \lambda$ with $\lambda \in (1,\infty)$ being fixed and $\epsilon$-independent.
Proposition \ref{proposition-zero} does not rule out the existence of nonzero solutions in $B_R$
to the fixed-point equation \eqref{fixed-point}
for sufficiently large $R$. In what follows, we will consider the
nonzero solutions to the fixed-point equation \eqref{fixed-point},
which are close to travelling waves given by the stationary log--KdV equation (\ref{stationaryLogKdV}).

Let us now recapture the formal limit to the stationary log--KdV equation (\ref{stationaryLogKdV}).
Using the Taylor series expansion as $k \to 0$,
\begin{equation}
\label{expansion-1}
\hat{\Lambda}(k) =  \frac{4}{k^2} \sin^2\left(\frac{k}{2}\right) = 1 - \frac{1}{12} k^2 + \mathcal{O}(k^4),
\end{equation}
and the power series (\ref{expansion-2}) for $\tilde{V}_{\epsilon}'(w)$,
we truncate the fixed-point equation (\ref{fixed-point-Fourier}) at the leading-order terms as follows
\begin{equation}
\label{stationaryLogKdVFourier}
\epsilon^2 \lambda \widehat{w_{\rm lead}}(k) =
- \frac{1}{12} k^2 \widehat{w_{\rm lead}}(k)
+ \epsilon^2 \mathcal{F}((1+w_{\rm lead}) \log(1+w_{\rm lead}))(k).
\end{equation}
Using the inverse Fourier transform and setting the variables $x = \epsilon z$
and $W(x) = w_{\rm lead}(z)$, we hence recover the stationary log--KdV equation
(\ref{stationaryLogKdV}).

\subsection{Solitary waves for the stationary log--KdV equation}

A standard construction of solitary waves for the stationary log--KdV equation
(\ref{stationaryLogKdV}) is based on a dynamical system analysis and gives the following result.

\begin{prop}
\label{proposition-soliton}
For any $\lambda > 1$, there exists a unique (up to the spatial translation) solution $W_{\rm stat}$
to the stationary log--KdV equation (\ref{stationaryLogKdV}) in $H^1(\mathbb{R})$,
such that $W_{\rm stat}(x) > 0$ for all $x \in \mathbb{R}$. Moreover, $W_{\rm stat} \in H^{\infty}(\mathbb{R})$,
$W_{\rm stat}'$ vanishes only at one point on $\mathbb{R}$, and
\begin{equation}
\label{bound-soliton}
W_{\rm stat}(x) \leq C_{\lambda} e^{- \kappa_{\lambda} |x|} ,\quad x \in \mathbb{R},
\end{equation}
for some $\lambda$-dependent positive constants $C_{\lambda}$ and $\kappa_{\lambda}$.
\end{prop}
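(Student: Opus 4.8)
The plan is to carry out a planar phase‑plane analysis of \eqref{stationaryLogKdV}. Write the equation as the autonomous system $W' = V$, $\tfrac{1}{12}V' = f(W)$ with $f(W) := \lambda W - (1+W)\log(1+W)$, which is $C^\infty$ on $(-1,\infty)$; multiplying the scalar equation by $W'$ yields the first integral
\[
E := \tfrac12 V^2 - 12\,G(W), \qquad G(W) := \int_0^W f(s)\,ds .
\]
The origin is an equilibrium ($f(0)=0$), and since $f'(0) = \lambda-1>0$ it is a hyperbolic saddle, with one‑dimensional stable and unstable manifolds and linear rate $\sqrt{12(\lambda-1)}$. Any positive solution decaying at $\pm\infty$ must be a homoclinic orbit to this saddle lying in $\{W>0\}$, on the level $E=0$.

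The key point is the qualitative shape of $G$, which follows from strict concavity of $f$: since $f''(W) = -(1+W)^{-1}<0$, $f(0)=0$, $f'(0)>0$, and $f(W)\to-\infty$ as $W\to+\infty$, the function $f$ has a single positive zero $W_0$, with $f>0$ on $(0,W_0)$ and $f<0$ on $(W_0,\infty)$. Hence $G$ is strictly increasing on $(0,W_0)$, strictly decreasing on $(W_0,\infty)$, with $G(0)=0$ and $G(W)\to-\infty$ as $W\to+\infty$, so there is a unique $W_*>W_0$ with $G(W_*)=0$, while $G>0$ on $(0,W_*)$ and $G'(W_*)=f(W_*)<0$. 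Thus the zero‑energy set $\{\tfrac12 V^2 = 12\,G(W)\}$ meets $\{W>0\}$ in exactly one orbit, the loop $V=\pm\sqrt{24\,G(W)}$, $0<W<W_*$, homoclinic to the origin. By quadrature $\int dW/\sqrt{24\,G(W)} = \pm(x-x_0)$: the integral diverges as $W\to0^+$ (as $G(W)\sim\tfrac{\lambda-1}{2}W^2$), giving biasymptoticity, and converges as $W\to W_*^-$ (as $G(W)\sim|f(W_*)|(W_*-W)$), so the maximum is attained at a finite point. Normalizing it to $x=0$ and using reversibility $x\mapsto-x$ of \eqref{stationaryLogKdV}, $W_{\rm stat}$ is even, takes values in $(0,W_*]$, with $W_{\rm stat}'>0$ on $(-\infty,0)$ and $W_{\rm stat}'<0$ on $(0,\infty)$, so $W_{\rm stat}'$ vanishes only at $x=0$.

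For uniqueness up to translation, let $W\in H^1(\mathbb{R})$ solve \eqref{stationaryLogKdV} with $W>0$ everywhere. Then $W$ is bounded and bounded away from $-1$, the right‑hand side of \eqref{stationaryLogKdV} is a smooth function of $W$, and a standard bootstrap gives $W\in C^\infty$ with $W,W'\to0$ at $\pm\infty$; hence $E=0$. The orbit of $W$ lies in $\{\tfrac12 V^2=12\,G(W)\}\cap\{W>0\}$; it cannot be periodic (since $W\in H^1$) and cannot exit $(0,W_*)$ (where $G<0$), so it must be the homoclinic loop above, i.e. $W$ is a translate of $W_{\rm stat}$. Finally, near $\pm\infty$ equation \eqref{stationaryLogKdV} is a smooth, exponentially small perturbation of $W''=12(\lambda-1)W$, so $W_{\rm stat}$ lies on the stable/unstable manifold of the saddle and, together with all its derivatives, decays like $e^{-\kappa_\lambda|x|}$ with $\kappa_\lambda=\sqrt{12(\lambda-1)}$ (or any smaller rate); this gives \eqref{bound-soliton} and $W_{\rm stat}\in H^\infty(\mathbb{R})$.

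The computations are routine; the only steps requiring care are establishing the exact qualitative shape of $G$ — which the strict concavity of $f$ delivers, and which in particular makes the turning point $W_*$ simple ($f(W_*)\neq0$) so that the homoclinic loop is non‑degenerate — and the uniqueness argument, where one must exclude periodic and unbounded orbits on the zero level set and confirm that an arbitrary positive $H^1$ solution sits precisely on that loop.
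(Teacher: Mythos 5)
Your proposal is correct and follows essentially the same route as the paper: a first integral / phase-plane analysis identifying the unique homoclinic loop to the hyperbolic saddle at the origin in $\{W>0\}$, with evenness from reversibility, exponential decay at the linear rate $\sqrt{12(\lambda-1)}$, and regularity by bootstrap. You merely supply the details the paper leaves implicit (the concavity of $f$ giving the unique turning point, and the quadrature/uniqueness argument), which are all accurate.
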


\begin{proof}
Integrating the second-order differential equation (\ref{stationaryLogKdV}), we obtain
the energy
$$
E(W) := \frac{1}{24} \left(\frac{d W}{dx}\right)^2 + \frac{1}{2} (1+W)^2 \log(1+W) - \frac{1}{4} (1 + W)^2 - \frac{1}{2} \lambda W^2 = E_0,
$$
which is constant in $x$. Since any solution in $H^1(\mathbb{R})$ should decay to zero at infinity, we
set $E_0 = -\frac{1}{4}$. Because $E(W) \to \infty$ as $W \to \infty$, the turning point $W_0 > 0$ such that
$E(W_0) = E_0$ exists if $E(W)$ is concave near $W = 0$. This is ensured by the condition
$\lambda > 1$.

Further analysis of the nonlinear potential shows that if $\lambda > 1$, there is a unique turning point $W_0$
and a unique homoclinic orbit in the right-half of the phase plane $(W,W')$
that connects the saddle point $(0,0)$ for $E_0 = -\frac{1}{4}$. For this homoclinic orbit,
$W'$ vanishes at exactly one point $x_0$, where $W(x_0) = W_0$. By the ODE theory, the homoclinic orbit
for the nondegenerate saddle point decays exponentially fast at infinity with the precise decay rate
$\kappa_{\lambda} := \sqrt{12 (\lambda - 1)}$. Furthermore, bootstrapping arguments
for the differential equation (\ref{stationaryLogKdV}) yield $W_{\rm stat} \in H^{\infty}(\mathbb{R})$
because $W\mapsto\log(1 + W)$ is $C^{\infty}$ on $(0,\infty)$.
\end{proof}

\begin{rem}
By the translational symmetry, we can always shift $W_{\rm stat}$ so that $x_0 = 0$, in which case,
$W_{\rm stat}'(0) = 0$ and $W_{\rm stat}$ is even.
\end{rem}

Linearizing the nonlinear differential equation (\ref{stationaryLogKdV}) at the
solitary wave $W_{\rm stat}$, we obtain the Schr\"{o}dinger operator with a bounded and decaying potential
\begin{equation}
\label{Schrodinger-operator}
L_{\lambda} := - \frac{1}{12} \frac{\partial^2}{\partial x^2} + \lambda - 1 - \log(1+W_{\rm stat}) :
H^2(\mathbb{R}) \to L^2(\mathbb{R}).
\end{equation}
Although the exact location of the spectrum of $L_{\lambda}$ is unknown, several facts follow from
the Sturm theory (see Chapter 5.5 in \cite{Teschl} for review of the Sturm theory).

\begin{prop}
For any $\lambda > 1$, the spectrum of $L_{\lambda}$ in $L^2(\mathbb{R})$ includes one negative eigenvalue
$\lambda_{-1}$ with the positive eigenfunction $W_{-1}$ and the zero eigenvalue $\lambda_0 = 0$
with the eigenfunction $W_0 = W_{\rm stat}'$. The rest of the spectrum of $L_{\lambda}$ lies in $(0,\infty)$ and
is bounded away from zero by a positive number. Consequently, the linear operator $L_{\lambda}$ is invertible
with bounded inverse on the subspace of $L^2(\mathbb{R})$ which are $L^2$-orthogonal to $W_0$.
\label{proposition-operator}
\end{prop}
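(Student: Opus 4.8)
The plan is to view $L_\lambda$ as a one-dimensional Schr\"{o}dinger operator with a bounded, exponentially decaying potential, and to assemble the statement from three standard ingredients: Weyl's theorem for the location of the essential spectrum, Sturm oscillation theory applied to the explicitly known zero-energy eigenfunction $W_0 = W_{\rm stat}'$, and elementary self-adjoint spectral theory for the invertibility assertion.

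First I would record the structure of the potential. By Proposition~\ref{proposition-soliton}, $W_{\rm stat}$ is bounded and positive and obeys the exponential bound (\ref{bound-soliton}), so $-\log(1+W_{\rm stat})$ is bounded and decays exponentially at $\pm\infty$; thus multiplication by it is a relatively compact perturbation of $-\frac{1}{12}\partial_x^2 + (\lambda-1)$ on $H^2(\mathbb{R})$, and Weyl's theorem yields $\sigma_{\rm ess}(L_\lambda) = [\lambda-1,+\infty)$. Since $\lambda>1$, the essential spectrum is contained in $(0,\infty)$ and is bounded away from $0$. Consequently, every spectral point of $L_\lambda$ below $\lambda-1$ is an isolated eigenvalue of finite multiplicity, and, $L_\lambda$ being a Schr\"{o}dinger operator on the whole line, such an eigenvalue is automatically simple (two linearly independent $L^2$ solutions of the second-order ODE are impossible, by the constancy of the Wronskian together with decay at infinity).

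Next I would identify the zero eigenvalue and its oscillation index. Differentiating the stationary log--KdV equation (\ref{stationaryLogKdV}) in $x$ and using $\frac{d}{dx}[(1+W)\log(1+W)] = (1+\log(1+W))W'$ gives $L_\lambda W_{\rm stat}' = 0$; since $W_{\rm stat}' \in H^{\infty}(\mathbb{R})$ by Proposition~\ref{proposition-soliton}, $0$ is an eigenvalue with eigenfunction $W_0 = W_{\rm stat}'$, and, again by that proposition, $W_{\rm stat}'$ vanishes at exactly one point, so $W_0$ has exactly one node. Sturm oscillation theory (Chapter~5.5 of \cite{Teschl}) then tells us that, listing the eigenvalues below $\inf\sigma_{\rm ess}(L_\lambda)$ in increasing order, the eigenfunction of the $(j+1)$-th one has exactly $j$ nodes. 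Hence $\lambda_0 = 0$ is the second such eigenvalue, there is precisely one eigenvalue $\lambda_{-1}<0$ below it, and its eigenfunction $W_{-1}$, having no node, is of one sign and can be normalized to be positive. The remaining spectrum is positive: the essential part is $[\lambda-1,\infty)$, any further eigenvalues lie in $(0,\lambda-1)$, and since $0$ is an isolated point of $\sigma(L_\lambda)$ while eigenvalues below $\lambda-1$ cannot accumulate at $0$, we get $\inf\big(\sigma(L_\lambda)\cap(0,\infty)\big)>0$, which is exactly the claimed gap.

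Finally, the eigenspace $\mathrm{span}\{W_0\}$ reduces the self-adjoint operator $L_\lambda$, so its restriction to $\{W_0\}^{\perp}\cap H^2(\mathbb{R})$ is self-adjoint with spectrum $\sigma(L_\lambda)\setminus\{0\}$, whose distance to $0$ equals $\min\{|\lambda_{-1}|,\inf(\sigma(L_\lambda)\cap(0,\infty))\}>0$; hence $L_\lambda$ is invertible with bounded inverse on $\{W_0\}^{\perp}$, as asserted. I expect the oscillation count to be the delicate step: one applies Sturm theory on the full line to an operator whose low-lying spectrum is not known a priori, and one must check that the node-counting statement is formulated for $L^2$-eigenfunctions and matches the convention of \cite{Teschl}. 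All of its inputs — simplicity of eigenvalues of one-dimensional Schr\"{o}dinger operators, exponential decay of $W_{\rm stat}$, and the exact node count of $W_{\rm stat}'$ — are provided by Proposition~\ref{proposition-soliton}, so this amounts to citing the theory in a suitable form rather than proving something genuinely new.
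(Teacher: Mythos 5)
Your proposal is correct and follows essentially the same route as the paper's proof: the zero eigenvalue from translation invariance ($L_\lambda W_{\rm stat}'=0$), Sturm oscillation theory applied to the single node of $W_{\rm stat}'$ to deduce exactly one negative simple eigenvalue with positive eigenfunction, the essential spectrum starting at $\lambda-1>0$ because the potential $\log(1+W_{\rm stat})$ is bounded and exponentially decaying, and the resulting bounded invertibility on $\{W_0\}^{\perp}$. You simply spell out the Weyl/relative-compactness and simplicity arguments that the paper leaves implicit.
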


\begin{proof}
Since $L_\lambda$ is self-adjoint, it has a real spectrum.
The zero eigenvalue is due to the possible translation of the solitary wave $W_{\rm stat}$ in space:
$L_{\lambda} W_{\rm stat}'  = 0$. Since $W_{\rm stat}'$ has exactly one zero, there exists exactly one negative
eigenvalue $\lambda_{-1}$ with a positive eigenfunction $W_{-1}$: $L_{\lambda} W_{-1} = \lambda_{-1} W_{-1}$.
The continuous spectrum of $L_{\lambda}$ is bounded from below by the positive number $\lambda - 1$,
thanks to the fact that the potential $\log(1+W_{\rm stat})$ of the Schr\"{o}dinger operator $L_{\lambda}$
is bounded and exponentially decaying at infinity. By Sturm's theory, there may exist a finite number
of positive eigenvalues between $0$ and $\lambda - 1$.
\end{proof}

For iterations of the fixed-point equation (\ref{fixed-point}), it is more convenient to work
with the operator
\begin{equation}
\label{fixed-point-operator}
S_{\lambda} := \left( - \frac{1}{12} \frac{\partial^2}{\partial x^2} + \lambda - 1 \right)^{-1} \log(1+W_{\rm stat}) :
L^2(\mathbb{R}) \to H^2(\mathbb{R}).
\end{equation}
The following result is an equivalent reformulation of Proposition \ref{proposition-operator}.

\begin{prop}
For any $\lambda > 1$, the spectrum of $S_{\lambda}$ in $L^2(\mathbb{R})$
lies in $(0,\infty)$ and includes one simple eigenvalue $\mu_{-1}$ bigger than 1,
a simple eigenvalue $\mu_0 = 1$ with the eigenfunction $W_0 = W_{\rm stat}'$, and the rest of the spectrum
of $S_{\lambda}$ is located in the interval $(0,1)$ bounded away from $\mu_0 = 1$.
Consequently, the linear operator $I - S_{\lambda}$ is invertible with bounded inverse on the subspace
of functions in $L^2(\mathbb{R})$ orthogonal to $W_0$.
\label{proposition-fixed-point-operator}
\end{prop}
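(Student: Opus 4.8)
The plan is to read Proposition~\ref{proposition-fixed-point-operator} off Proposition~\ref{proposition-operator} by realizing $S_\lambda$ as a similarity transform of a nonnegative compact self-adjoint operator. Set $A := -\tfrac{1}{12}\partial_x^2 + \lambda - 1$; for $\lambda > 1$ this is a positive self-adjoint operator on $L^2(\mathbb{R})$ with domain $H^2(\mathbb{R})$, bounded below by $\lambda - 1 > 0$ and hence boundedly invertible. Let $M$ be multiplication by $m := \log(1 + W_{\rm stat})$, which by Proposition~\ref{proposition-soliton} is bounded, smooth, strictly positive on $\mathbb{R}$, and exponentially decaying. Then $S_\lambda = A^{-1}M$, while $L_\lambda = A - M = A(I - S_\lambda)$, the factorization being legitimate because $A$ is injective.

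First I would introduce the symmetrized operator $T := A^{-1/2} M A^{-1/2} = A^{1/2} S_\lambda A^{-1/2}$ on $L^2(\mathbb{R})$. It is self-adjoint and nonnegative, since $T = (M^{1/2}A^{-1/2})^\ast (M^{1/2}A^{-1/2})$, and it is compact because $A^{-1/2}$ maps $L^2(\mathbb{R})$ boundedly into $H^1(\mathbb{R})$ while multiplication by the exponentially decaying function $m^{1/2}$ followed by the embedding $H^1(\mathbb{R}) \hookrightarrow L^2_{\rm loc}(\mathbb{R})$ is compact (Rellich's theorem combined with the decay). As $S_\lambda = A^{-1/2} T A^{1/2}$ is similar to $T$ via the isomorphism $A^{1/2}$, the two operators have the same spectrum; thus $\sigma(S_\lambda) \subset [0,\infty)$ consists of $0$ — the only point of the essential spectrum — together with a sequence of positive eigenvalues of finite multiplicity that can accumulate only at $0$, and $S_\lambda$ is diagonalizable. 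Moreover $0$ is not an eigenvalue: $Tf = 0$ forces $m^{1/2}A^{-1/2}f \equiv 0$, and $m > 0$ everywhere then gives $f = 0$.

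Next I would transfer the eigenvalue data of $L_\lambda$. From $L_\lambda = A(I - S_\lambda)$ and the injectivity of $A$, one gets $\ker(I - S_\lambda) = \ker L_\lambda = \operatorname{span}\{W_{\rm stat}'\}$, so $\mu_0 = 1$ is an eigenvalue of $S_\lambda$ with eigenfunction $W_0 = W_{\rm stat}'$; it is algebraically simple since $S_\lambda$ is diagonalizable and, by Proposition~\ref{proposition-operator}, $\ker L_\lambda$ is one-dimensional. For the part of the spectrum above $1$, I would use the congruence $I - T = A^{-1/2} L_\lambda A^{-1/2}$, equivalently $L_\lambda = A^{1/2}(I - T)A^{1/2}$: since congruence by a topological isomorphism preserves the dimension of a maximal subspace on which the quadratic form is negative, the number of negative eigenvalues of $I - T$ — that is, the number of eigenvalues of $T$, hence of $S_\lambda$, exceeding $1$ — equals the number of negative eigenvalues of $L_\lambda$, which is exactly one by Proposition~\ref{proposition-operator}; denote it $\mu_{-1} > 1$, simple for the same reason as $\mu_0$. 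All other eigenvalues lie in $(0,1)$ and, accumulating only at $0$, stay bounded away from $\mu_0 = 1$. Finally, $I - S_\lambda$ is a compact perturbation of the identity, hence Fredholm of index $0$ on $L^2(\mathbb{R})$, with one-dimensional kernel $\operatorname{span}\{W_0\}$ and no generalized kernel; using the spectral decomposition of the diagonalizable $S_\lambda$ (or simply the Fredholm alternative), the restriction of $I - S_\lambda$ to $\{W_0\}^{\perp}$ is a bijection onto its range, with bounded inverse by the open mapping theorem, which is the asserted invertibility. The main obstacle I anticipate is the careful verification of the compactness of $T$ — converting the exponential decay of $W_{\rm stat}$ from Proposition~\ref{proposition-soliton} into compactness of $m^{1/2}A^{-1/2}$ — together with the standard but infinite-dimensional invariance of the Morse index under congruence by an isomorphism, which is precisely what lets the self-adjoint spectral information about $L_\lambda$ descend to the non-self-adjoint $S_\lambda$.
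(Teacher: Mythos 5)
Your proposal is correct, and its core is the same as the paper's: symmetrize $S_\lambda$ via the isomorphism $A^{1/2}=\left(-\tfrac{1}{12}\partial_x^2+\lambda-1\right)^{1/2}$ to get reality and positivity of the spectrum, and transfer the count of negative eigenvalues and the kernel from $L_\lambda$ to $I-S_\lambda$ by Sylvester's inertia law (your ``invariance of the Morse index under congruence by an isomorphism'' is exactly the tool the paper invokes, citing Chapter 4.1.2 of \cite{Pel-book}). Where you genuinely diverge is in the second half. You establish compactness of $T=A^{-1/2}MA^{-1/2}$ and use it twice: the accumulation of eigenvalues only at $0$ gives ``the rest of the spectrum lies in $(0,1)$ bounded away from $1$,'' and the Fredholm alternative for the compact perturbation $I-S_\lambda$ gives the bounded invertibility on $\{W_0\}^\perp$. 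The paper explicitly declines to use compactness (noting in a remark that it holds by Pego's criterion but is not needed): it instead deduces the gap below $\mu_0=1$ from the fact that $L_\lambda$ is bounded below by a positive constant on $X_0^\perp={\rm span}\{W_{-1},W_0\}^\perp$, concludes $\|S_\lambda\|_{X_0^\perp\to L^2}<1$, and inverts $I-S_\lambda$ by a Neumann series. Your route costs you the verification that $m^{1/2}A^{-1/2}$ is compact (decay of $W_{\rm stat}$ plus Rellich), but it buys a cleaner justification of the spectral gap and of the invertibility; the paper's Neumann-series step is terser but leans on the norm bound on $X_0^\perp$, which in the non-self-adjoint picture requires the same conjugation you make explicit. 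One small point to tighten: since $A^{1/2}$ is unbounded on $L^2$, the ``similarity'' $S_\lambda=A^{-1/2}TA^{1/2}$ should be read as an identification of the nonzero point spectra (eigenfunctions of $S_\lambda$ for $\mu\neq0$ automatically lie in $H^2$, so $A^{1/2}u$ makes sense), with the essential spectrum $\{0\}$ handled separately via compactness; as stated this is a gloss rather than a gap.
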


\begin{proof}
The operator $S_{\lambda}$ is conjugated via the positive operator
$\left( - \frac{1}{12} \partial_x^2 + \lambda - 1 \right)^{1/2}$
to a self-adjoint operator in $L^2(\mathbb{R})$. Hence the spectrum of
$S_{\lambda}$ is real. Moreover, since $\log(1+W_{\rm stat}(x)) > 0$ for all $x \in \mathbb{R}$,
the spectrum of $S_{\lambda}$ is positive.

By Sylvester's inertia law (see Chapter 4.1.2 in \cite{Pel-book}),
operators $L_{\lambda}$ and $I - S_{\lambda}$ have the same number of negative eigenvalues
and the same multiplicity of the zero eigenvalue. By Proposition \ref{proposition-operator},
$L_{\lambda}$ has one simple negative eigenvalue and a simple zero eigenvalue.
Equivalently, $S_{\lambda}$ has one simple eigenvalue
$\mu_{-1} > 1$ and a simple eigenvalue $\mu_0 = 1$.
Moreover, the eigenfunction of $S_{\lambda}$ for $\mu_0 = 1$ is the same
as that of $L_{\lambda}$ for $\lambda_0 = 0$.

Finally, because the spectrum of $L_{\lambda}$ on the orthogonal complement of
$X_0 := {\rm span}\{W_{-1},W_0\}$ in $L^2(\mathbb{R})$
is strictly positive and bounded away from zero, the rest of the spectrum of $S_{\lambda}$ is
located in the interval $(0,1)$ and bounded away from $\mu_0 = 1$. Consequently,
$\| S_{\lambda} \|_{X_0^{\perp} \to L^2} < 1$ and, by Neumann's theorem,
$I - S_{\lambda}$ is invertible with bounded inverse on the subspace $X_0^{\perp}$
which contains functions in $L^2(\mathbb{R})$ orthogonal to $X_0$. Furthermore, since $\mu_{-1} > 1$,
it is also invertible on the subspace of functions in $L^2(\mathbb{R})$ orthogonal to $W_0$.
\end{proof}

\begin{rem}
It follows from the criterion given by Pego \cite{pego} that $S_{\lambda}$ is actually
a compact operator in $L^2(\mathbb{R})$. However, we do not need to use this fact here
nor to construct the spectrum of $S_{\lambda}$ explicitly.
\end{rem}

\subsection{Strategy to prove Theorem \ref{theorem-stationary}}

Let us divide the infinite line for the Fourier variable $k$ into two sets $\mathcal{I}_p := [-\epsilon^{p},\epsilon^p]$
and $\mathcal{J}_p := \mathbb{R} \backslash \mathcal{I}_p$, where a positive $\epsilon$-independent
parameter $p$ is to be defined later. Let $\chi_S$ be the characteristic function of the set
$S \subset \mathbb{R}$. Then, we decompose the solution in the Fourier form into two parts:
\begin{equation}
\label{decomposition}
\hat{w}(k) = \hat{u}(k) + \hat{v}(k), \quad \mbox{\rm where} \;\;
\hat{u}(k) := \chi_{\mathcal{I}_p}(k) \hat{w}(k), \quad
\hat{v}(k) := \chi_{\mathcal{J}_p}(k) \hat{w}(k).
\end{equation}
The original problem (\ref{fixed-point-Fourier}) is now written as a system of two equations
\begin{equation}
\label{fixed-point-Fourier-1}
\hat{v}(k) = \frac{1}{1 + \epsilon^2 \lambda}  \chi_{\mathcal{J}_p}(k) \hat{\Lambda}(k) \mathcal{F}(\tilde{V}_{\epsilon}'(u+v))(k), \quad k \in \mathcal{J}_p
\end{equation}
and
\begin{equation}
\label{fixed-point-Fourier-2}
\hat{u}(k) = \frac{1}{1 + \epsilon^2 \lambda}  \chi_{\mathcal{I}_p}(k) \hat{\Lambda}(k) \mathcal{F}(\tilde{V}_{\epsilon}'(u+v))(k), \quad k \in \mathcal{I}_p.
\end{equation}
Here we set $\lambda > 1$ to be $\epsilon$-independent. For $R > 0$ and $r \in (-1,0)$, we define
\begin{equation}
\label{ball-R}
B_{R,r} := \{ u \in L^2(\mathbb{R}) \cap L^{\infty}(\mathbb{R}) : \;\; r \leq \inf_{\mathbb{R}} u, \quad \sup_{\mathbb{R}} u \leq R\},
\end{equation}
to consider functions which may have small negative and large positive values.

First, we show that for any $u \in B_{R,r}$ and for any small $\epsilon$, there exists a unique
solution $v$ to the first equation (\ref{fixed-point-Fourier-1}) such that
\begin{equation}
\label{bound-1}
\| v \|_{L^2 \cap L^{\infty}} \leq C_{R,r} \epsilon^{2-2p} \| u \|_{L^2},
\end{equation}
where the positive constant $C_{R,r}$ is independent of $\epsilon$ and $\| u \|_{L^2}$.
We use the contraction principle for equation (\ref{fixed-point-Fourier-1}) which holds if $p < 1$.

Second, we show that for any $v$ expressed from solution to the first equation
and for any small $\epsilon$, there exists a unique solution $u$ to the second equation
(\ref{fixed-point-Fourier-2}) near the solution $w_{\rm lead}=W_{\rm stat}(\epsilon\cdot)$
to the stationary log--KdV equation in the Fourier form (\ref{stationaryLogKdVFourier}):
\begin{equation}
\label{bound-2}
\| u - W_{\rm stat}(\epsilon \cdot) \|_{L^2 \cap L^{\infty}} \leq
C_{R,r,\lambda} \max\{\epsilon^{4p-2},\epsilon^{2-2p}\}  \| W_{\rm stat}(\epsilon \cdot) \|_{L^2},
\end{equation}
where the positive constant $C_{R,r,\lambda}$ is independent of $\epsilon$.
We use a fixed-point argument for equation (\ref{fixed-point-Fourier-2}).
Note that no contraction principle
can be directly applied directly neither to the full equation (\ref{fixed-point-Fourier})
nor to the reduced equation (\ref{fixed-point-Fourier-2})
because even if the fixed point exists, the nonlinear operator
on the right-hand side is not a contraction operator in the neighborhood
of the fixed point. This fact is explained roughly because the power of the nonlinear term
is bigger than one for any $\epsilon > 0$ and, in particular, it results in
the appearance of eigenvalue $\mu_{-1} > 1$ in Proposition \ref{proposition-fixed-point-operator}.
Therefore, we have to regroup the left-hand and right-hand side terms of equation
(\ref{fixed-point-Fourier-2}) before applying fixed-point arguments.

Note that $\| W_{\rm stat}(\epsilon \cdot) \|_{L^2} = \mathcal{O}(\epsilon^{-1/2})$ as $\epsilon \to 0$,
therefore, both corrections $u - W_{\rm stat}$ and $v$ are small in $L^{\infty}$ norm if
\begin{equation}
\label{constraints-on-p}
2 - 2p - \frac{1}{2} > 0 \quad \mbox{\rm and} \quad 4 p - 2 - \frac{1}{2} > 0,
\end{equation}
that is, for $\displaystyle p \in \left( \frac58,\frac68\right)$.
The optimal (smallest) bound occurs at $p = 2/3$ and corresponds to the power $1/6$
in the bound (\ref{bound-major}).
Thanks to the positivity of $W_{\rm stat}$, we have $r = \mathcal{O}(\epsilon^{1/6})$ as $\epsilon \to 0$.
At the same time, $R = \mathcal{O}(1)$ depends on $\lambda > 1$ and can be as large as necessary
(but $\epsilon$-independent).

We now follow the scheme above and prove bounds (\ref{bound-1}) and (\ref{bound-2}).
As explained above, these bounds yield Theorem \ref{theorem-stationary}.

\subsection{Proof of the bound (\ref{bound-1})}

The following lemma yields the bound (\ref{bound-1}).

\begin{lem}
For $R > 0$ and $r \in (-1,0)$, let $u$ belong to the set $B_{R,r}$ defined in \eqref{ball-R}.
For any $\lambda > 1$, $p \in (0,1)$, and sufficiently small $\epsilon$,
there exists a unique solution to equation (\ref{fixed-point-Fourier-1})
such that
\begin{equation}
\label{bound-1-again}
\| v \|_{L^2 \cap L^{\infty}} \leq C_{R,r} \epsilon^{2-2p} \| u \|_{L^2},
\end{equation}
where the positive constant $C_{R,r}$ is independent of $\epsilon$ and $\| u \|_{L^2}$.
Moreover, the map $u \mapsto v$ is $C^1$.
\label{lemma-bound-1}
\end{lem}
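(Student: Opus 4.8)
The plan is to solve equation (\ref{fixed-point-Fourier-1}) for $\hat v$ by a straightforward contraction mapping argument in the Banach space
$$
Y := \{ v \in L^2(\mathbb{R}) \cap L^\infty(\mathbb{R}) : \ \hat v \text{ supported in } \mathcal{J}_p \},
$$
equipped with the $L^2 \cap L^\infty$ norm, treating $u \in B_{R,r}$ as a fixed parameter. Denote by $T_u(v)$ the right-hand side of (\ref{fixed-point-Fourier-1}) expressed in physical space; concretely, $T_u(v)(z) = \frac{1}{1+\epsilon^2\lambda}\, \chi_{\mathcal{J}_p}\!\ast_{\mathcal F}\!\big( \Lambda \ast \tilde V_\epsilon'(u+v)\big)(z)$, where the inner convolution with $\Lambda$ comes from $\hat\Lambda$ and then one multiplies by $\chi_{\mathcal J_p}$ on the Fourier side. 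The point is that $\tilde V_\epsilon'(u+v) = (1+u+v)^{1+\epsilon^2} - 1$ is well-defined and smooth because $u + v \geq r - \|v\| > -1$ once $\|v\|$ is small, and it satisfies $\tilde V_\epsilon'(u+v) = (u+v) + \epsilon^2 (1+u+v)\log(1+u+v) + \mathcal O(\epsilon^4(\cdots))$ by (\ref{expansion-2}). The crucial gain is that the \emph{linear} part $(u+v)$ of $\tilde V_\epsilon'$, after being hit by $\hat\Lambda(k)$ and restricted to $\mathcal J_p$, does not give back a term of size $\|u\|$: on $\mathcal J_p$ we have $|k| \geq \epsilon^p$, and since $\hat\Lambda(k) = 1 - \frac{1}{12}k^2 + \mathcal O(k^4)$ while $1+\epsilon^2\lambda = 1 + \mathcal O(\epsilon^2)$, the symbol $\frac{\hat\Lambda(k)}{1+\epsilon^2\lambda} - 1 = \mathcal O(\max\{k^2,\epsilon^2\})$ is small relative to what we need — but this alone is only $\mathcal O(\epsilon^{2p})$, not $\mathcal O(\epsilon^{2-2p})$.

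First I would therefore reorganize: move the $\hat v$ produced by the linear part of $\tilde V_\epsilon'(u+v)$ to the left. That is, rewrite (\ref{fixed-point-Fourier-1}) as
$$
\Big(1 - \tfrac{\hat\Lambda(k)}{1+\epsilon^2\lambda}\chi_{\mathcal J_p}(k)\Big)\hat v(k) = \tfrac{\chi_{\mathcal J_p}(k)}{1+\epsilon^2\lambda}\Big[ \big(\hat\Lambda(k)-1-\epsilon^2\lambda\big)\hat u(k) + \hat\Lambda(k)\,\mathcal F\big(N_\epsilon(u+v)\big)(k)\Big],
$$
where $N_\epsilon(w) := \tilde V_\epsilon'(w) - w = \epsilon^2(1+w)\log(1+w) + \mathcal O(\epsilon^4(1+w)\log^2(1+w))$ is the genuinely nonlinear remainder, of size $\mathcal O(\epsilon^2)$ times a Lipschitz-in-$w$ quantity on $B_{R,r}$. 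The multiplier on the left, call it $m_\epsilon(k)$, satisfies $m_\epsilon(k) = 1$ for $k \in \mathcal I_p$ and, for $k \in \mathcal J_p$, $m_\epsilon(k) \geq 1 - \frac{1}{1+\epsilon^2\lambda}\big(1 - \frac{1}{12}k^2 + \mathcal O(k^4)\big) \geq c\,\max\{k^2,\epsilon^2\}$ for small $\epsilon$, and is moreover bounded below by a positive constant $\epsilon^{2}$-independent multiple once we further observe $|k|\ge \epsilon^p$. Actually the cleanest statement is $m_\epsilon(k) \geq c\, \epsilon^{2p}$ on $\mathcal J_p$, hence $1/m_\epsilon$ is a Fourier multiplier of norm $\mathcal O(\epsilon^{-2p})$ on $L^2$; one also checks it is a multiplier on the Wiener-type algebra controlling $L^\infty$ (e.g.\ via $\|\mathcal F^{-1}(m_\epsilon^{-1}\hat f)\|_{L^\infty} \le \|m_\epsilon^{-1}\|_{\mathcal M} \|f\|$, using that $m_\epsilon^{-1}$ and its derivatives decay suitably — this is the one spot needing a small technical lemma, but it is standard since $\hat\Lambda$ and its derivatives are bounded and $m_\epsilon$ is bounded away from zero on $\mathcal J_p$). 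Inverting $m_\epsilon$ and using that the right-hand side carries a factor $\hat\Lambda(k)-1-\epsilon^2\lambda = \mathcal O(\max\{k^2,\epsilon^2\}) = \mathcal O(\epsilon^{2}+k^2)$ in front of $\hat u$ — which on $\mathcal J_p$ is still only $\mathcal O(1)$, not small — I get from the $\hat u$ term a contribution $\mathcal O(\epsilon^{-2p}) \cdot \mathcal O(1) \cdot \|u\|$, which is the wrong sign of power.

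So the decisive estimate must exploit more carefully the shape of $\hat\Lambda - 1 - \epsilon^2\lambda$ against $1/m_\epsilon$: on $\mathcal J_p$, $m_\epsilon(k) = 1 - \frac{\hat\Lambda(k)}{1+\epsilon^2\lambda}$, so $\frac{\hat\Lambda(k)-1-\epsilon^2\lambda}{1+\epsilon^2\lambda}\cdot\frac{1}{m_\epsilon(k)} = \frac{\hat\Lambda(k)-(1+\epsilon^2\lambda)}{(1+\epsilon^2\lambda) - \hat\Lambda(k)} = -1$ exactly. In other words, the $\hat u$-term, after inverting the modified operator, contributes precisely $-\chi_{\mathcal J_p}(k)\hat u(k)$, which is $\mathcal O(\|u\|_{L^2})$ — and here I finally use that $u$ is band-limited to $\mathcal I_p$, so $\chi_{\mathcal J_p}\hat u \equiv 0$! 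Thus that term vanishes identically, and what remains on the right is purely the nonlinear piece $\frac{1}{m_\epsilon}\cdot\frac{\hat\Lambda}{1+\epsilon^2\lambda}\chi_{\mathcal J_p}\mathcal F(N_\epsilon(u+v))$, of size $\mathcal O(\epsilon^{-2p})\cdot\mathcal O(\epsilon^2)\cdot\|u+v\|_{\text{(Lipschitz quantity)}}$. Now the fixed-point map $v \mapsto \Phi_u(v)$ sends a ball of radius $C_{R,r}\epsilon^{2-2p}\|u\|_{L^2}$ in $Y$ into itself for small $\epsilon$ (using $\|N_\epsilon(u+v)\|_{L^2} \lesssim \epsilon^2(\|u\|_{L^2} + \|v\|_{L^2})$, valid uniformly on $B_{R,r}$ since $1+u+v$ stays in a fixed compact subinterval of $(0,\infty)$), and is a contraction there because the Lipschitz constant of $N_\epsilon$ in $v$ on that region is $\mathcal O(\epsilon^2)$, so after multiplying by $\mathcal O(\epsilon^{-2p})$ the Lipschitz constant of $\Phi_u$ is $\mathcal O(\epsilon^{2-2p}) \to 0$. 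The Banach fixed-point theorem gives the unique $v$ with the stated bound (\ref{bound-1-again}). Finally, the $C^1$ dependence $u \mapsto v$ follows from the implicit function theorem: the map $(u,v) \mapsto v - \Phi_u(v)$ is $C^1$ because $w \mapsto (1+w)^{1+\epsilon^2}$ is $C^1$ on the relevant range and composition/convolution with fixed multipliers preserves $C^1$-ness on $L^2\cap L^\infty$, and the partial derivative in $v$ at the solution is $I - D_v\Phi_u = I + \mathcal O(\epsilon^{2-2p})$, invertible for small $\epsilon$.

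I expect the main obstacle to be \emph{not} the contraction estimate itself but verifying that $1/m_\epsilon$ acts boundedly on $L^\infty$ (equivalently, controlling it as a Fourier multiplier on the space $L^2 \cap L^\infty$ we are working in) with the claimed operator norm $\mathcal O(\epsilon^{-2p})$, since $m_\epsilon$ is only bounded below by $\mathcal O(\epsilon^{2p})$ and a crude bound on the multiplier norm might lose additional powers of $\epsilon$ through the low end of $\mathcal J_p$ near $|k| = \epsilon^p$. The resolution is to split $\mathcal J_p$ into $\{\epsilon^p \le |k| \le \delta_0\}$ and $\{|k| > \delta_0\}$ for a fixed small $\delta_0$: on the outer region $m_\epsilon$ is bounded below by an $\epsilon$-independent positive constant and the multiplier is harmless; on the inner annulus, $m_\epsilon(k) = \frac{1}{12}\frac{k^2}{1+\epsilon^2\lambda} + \epsilon^2\frac{\lambda}{1+\epsilon^2\lambda} + \mathcal O(k^4) \asymp k^2 + \epsilon^2$, and the multiplier $(k^2+\epsilon^2)^{-1}$ restricted to $|k| \ge \epsilon^p$ has the needed bounds on all derivatives up to the order required by the (one-dimensional) multiplier theorem, giving $L^2\to L^2$ norm $\mathcal O(\epsilon^{-2p})$ and, after pairing with the rapidly-decaying kernel of $\Lambda$, the same on $L^\infty$; alternatively one bounds $\|v\|_{L^\infty} \le \|\hat v\|_{L^1}$ directly, using $\hat v$ supported in $\mathcal J_p$ with $|\hat v(k)| \le \frac{C}{k^2+\epsilon^2}|\widehat{N_\epsilon(u+v)}(k)|$ and Cauchy--Schwarz, which is the cleaner route and loses only the anticipated $\epsilon^{-2p}$ up to the choice $p<1$ ensuring integrability near $|k|=\epsilon^p$. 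This is the point where the restriction $p \in (0,1)$ in the hypothesis is actually used.
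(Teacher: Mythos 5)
Your proposal is correct and is essentially the paper's own argument: you make the same splitting $\tilde V_\epsilon'(w)=w+N_\epsilon(w)$ with $N_\epsilon=\mathcal{O}(\epsilon^2)$, use that $\chi_{\mathcal{J}_p}\hat u\equiv 0$ to remove the linear $u$-contribution, obtain the $\epsilon^{-2p}$ loss from $1-\hat\Lambda(k)\gtrsim \epsilon^{2p}$ on $\mathcal{J}_p$, and control $L^\infty$ via $\|\hat v\|_{L^1}$ and Cauchy--Schwarz, exactly as in the paper. The only cosmetic difference is that you invert the multiplier $m_\epsilon=1-\hat\Lambda_{\mathcal{J}_p}/(1+\epsilon^2\lambda)$ explicitly before contracting, whereas the paper leaves the linear $v$-term on the right-hand side and absorbs the same $\epsilon^{-2p}$ through a Lipschitz constant of size $1-C\epsilon^{2p}$; these are equivalent (Neumann series) formulations.
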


\begin{proof}
We write $\tilde{V}_{\epsilon}'(w) = w + N_{\epsilon}(w)$, where
$$
N_{\epsilon}(w) = (1+w)^{1+\epsilon^2} - 1 - w = \log(1 + w) \int_0^{\epsilon^2} (1 + w)^{1+x} dx
$$
and
$$
N_{\epsilon}(w_1) - N_{\epsilon}(w_2) = \epsilon^2 \int_{w_2}^{w_1} (1 + x)^{\epsilon^2} dx +
\int_{w_2}^{w_1} \log(1+x) \left( \int_0^{\epsilon^2} (1+x)^y dy \right) dx.
$$

The function $f(w):= \log(1+w)/w$ is strictly decreasing for $w > -1$ with $f(0) = 1$.
As a result, for every $r \in (-1,0)$, there is a positive constant $C_r$ such that
$$
|N_{\epsilon}(w)| \leq \epsilon^2 C_r (1 + w)^{1+\epsilon^2} w, \quad w \geq r
$$
and
$$
|N_{\epsilon}(w_1) - N_{\epsilon}(w_2)| \leq \epsilon^2 C_r \left(1 + \max\{w_1,w_2\}\right)^{1+\epsilon^2} |w_1-w_2|, \quad
w_1, w_2 \geq r.
$$
Note that $C_r$ may be chosen so that $C_r \Tend{r}{0} 1$.

Therefore, we rewrite equation (\ref{fixed-point-Fourier-1}) in the equivalent form
\begin{equation}
\label{outer-problem}
\hat{v}(k) = \hat{\mathcal{A}}_{\lambda,\epsilon}(\hat{u},\hat{v}) :=
\frac{1}{1 + \epsilon^2 \lambda}  \hat{\Lambda}_{\mathcal{J}_p}(k)
\left( \hat{v}(k) + \chi_{\mathcal{J}_p}(k) \mathcal{F}(N_{\epsilon}(u+v))(k) \right),
\quad k \in \mathcal{J}_p,
\end{equation}
where $\hat{\Lambda}_{\mathcal{J}_p}(k) :=  \chi_{\mathcal{J}_p}(k)  \hat{\Lambda}(k)$.
Because $|k| \geq \epsilon^p$ for $k \in \mathcal{J}_p$, we note from (\ref{expansion-1}) that
there exists an $\epsilon$-independent positive constant $C$ such that
$$
\| \hat{\Lambda}_{\mathcal{J}_p} \|_{L^{\infty}} \leq 1 - C \epsilon^{2p}.
$$
Let $\mathcal{A}_{\lambda,\epsilon}(u,v) := \mathcal{F}^{-1}(\hat{\mathcal{A}}_{\lambda,\epsilon}(\hat{u},\hat{v}))$.
By Plancherel's Theorem, we obtain
\begin{eqnarray*}
\| \mathcal{A}_{\lambda,\epsilon}(u,v) \|_{L^2} & = & \frac{1}{\sqrt{2\pi}} \| \hat{\mathcal{A}}_{\lambda,\epsilon}(\hat{u},\hat{v}) \|_{L^2} \\
& \leq & \| \hat{\Lambda}_{\mathcal{J}_p} \|_{L^{\infty}} \left(
\| v \|_{L^2} + \| N_{\epsilon}(u+v) \|_{L^2} \right) \\
& \leq & (1 - C \epsilon^{2p})
\left( \| v \|_{L^2} + \epsilon^2 C_r (1 + \| u+v \|_{L^{\infty}})^{1+\epsilon^2} \| u+v \|_{L^2} \right).
\end{eqnarray*}
By Cauchy--Schwarz inequality, we also have
\begin{eqnarray*}
\| \mathcal{A}_{\lambda,\epsilon}(u,v) \|_{L^{\infty}} & \leq & \frac{1}{2\pi}
\| \hat{\mathcal{A}}_{\lambda,\epsilon}(\hat{u},\hat{v}) \|_{L^1} \\
& \leq & \| \Lambda \|_{L^2} \left(
\| v \|_{L^2} + \| N_{\epsilon}(u+v) \|_{L^2} \right) \\
& \leq & \frac{\sqrt{2}}{\sqrt{3}}
\left( \| v \|_{L^2} + \epsilon^2 C_r (1 + \| u+v \|_{L^{\infty}})^{1+\epsilon^2} \| u+v \|_{L^2} \right).
\end{eqnarray*}

Let $u \in B_{R,r}$ defined by (\ref{ball-R}), where $R > 0$ and $r \in (-1,0)$ are fixed
independently from $\epsilon$. Recall that if $p < 1$, then $\epsilon^{2p} \gg \epsilon^2$ as $\epsilon \to 0$.
For every $u$ in $B_{R,r}$, $\lambda > 1$, and sufficiently small $\epsilon > 0$,
the operator $\mathcal{A}_{\lambda,\epsilon}(u,\cdot)$ maps a ball of functions
$v$ in $L^2(\mathbb{R}) \cap L^{\infty}(\mathbb{R})$ centered at zero
with the radius $\delta > 0$ to itself. Moreover, the operator
$\mathcal{A}_{\lambda,\epsilon}(u,\cdot)$ is a contraction in this ball,
using similar bounds
\begin{eqnarray*}
\| \mathcal{A}_{\lambda,\epsilon}(u,v_1) - \mathcal{A}_{\mu,\epsilon}(u,v_2) \|_{L^2} & \leq &
\| \hat{\Lambda}_{\mathcal{J}_p} \|_{L^{\infty}} \left(
\| v_1 - v_2 \|_{L^2} + \| N_{\epsilon}(u+v_1) - N_{\epsilon}(u+v_2) \|_{L^2} \right) \\
& \leq & (1 - C \epsilon^{2p})(1 + \epsilon^2 C_r (1 + R + \delta)^{1+\epsilon^2} )  \| v_1 - v_2\|_{L^2}
\end{eqnarray*}
and
\begin{eqnarray*}
\| \mathcal{A}_{\lambda,\epsilon}(u,v_1) - \mathcal{A}_{\mu,\epsilon}(u,v_2) \|_{L^{\infty}}
\leq  \frac{\sqrt{2}}{\sqrt{3}}
\left( 1 + \epsilon^2 C_r (1 + R + \delta)^{1+\epsilon^2} \right)  \| v_1 - v_2\|_{L^2}
\end{eqnarray*}
Again, the contraction in $L^2(\mathbb{R})$ is ensured by the fact that $\epsilon^{2p} \gg \epsilon^2$ as $\epsilon \to 0$.
Note that the Lipschitz constant is bounded from above by $1 - C \epsilon^{2p}$ independently
from $R$.

By the contraction mapping principle, for every given $u$ in $B_{R,r}$,
$\lambda > 1$, $p < 1$, and sufficiently small $\epsilon > 0$, there exists a unique fixed point of
the operator equation
$v = \mathcal{A}_{\lambda,\epsilon}(u,v)$ in $L^2(\mathbb{R}) \cap L^{\infty}(\mathbb{R})$ satisfying the bound
(\ref{bound-1-again}), where $\epsilon^{2p}$ is lost because of the proximity of the Lipschitz constant to unity.
Differentiability of the mapping $u \mapsto v$ also follows from the contraction mapping principle,
since the nonlinear operator $\mathcal{A}_{\lambda,\epsilon}(u,v)$ is differentiable for both $u$ and $v$.
\end{proof}

\subsection{Proof of the bound (\ref{bound-2})}

The following lemma yields the bound (\ref{bound-2}).

\begin{lem}
For any fixed $\lambda > 1$ and $p \in \left(\frac{5}{8},\frac{6}{8}\right)$, let $v \in L^2(\mathbb{R}) \cap L^{\infty}(\mathbb{R})$
be uniquely expressed in terms of $u \in B_{R,r}$ for some $R > 0$ and $r \in (-1,0)$
by Lemma \ref{lemma-bound-1}, where $B_{R,r}$ is defined by (\ref{ball-R}). For sufficiently small $\epsilon$,
there exists a unique solution to equation (\ref{fixed-point-Fourier-2}) in $B_{R,r}$ such that
\begin{equation}
\label{bound-2-again}
\| u - W_{\rm stat}(\epsilon \cdot) \|_{L^2 \cap L^{\infty}} \leq
C_{R,r,\lambda} \max\{\epsilon^{4p-2},\epsilon^{2-2p}\} \| W_{\rm stat}(\epsilon \cdot) \|_{L^2},
\end{equation}
where $W_{\rm stat}$ is the unique positive and even solution
to the stationary log--KdV equation \eqref{stationaryLogKdV} and
the positive constant $C_{R,r,\lambda}$ is independent of $\epsilon$.
\label{lemma-bound-2}
\end{lem}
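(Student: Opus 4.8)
\medskip
\noindent\textbf{Proof strategy.}
The plan is to pass to the rescaled variables $x=\epsilon z$, $\kappa=k/\epsilon$, in which the low-frequency window $k\in\mathcal{I}_p$ becomes the expanding interval $|\kappa|\leq\epsilon^{p-1}$. Writing $\tilde V_\epsilon'(w)=w+\epsilon^2 g(w)+R_\epsilon(w)$ with $g(w)=(1+w)\log(1+w)$ and $R_\epsilon(w)=\mathcal{O}(\epsilon^4(1+w)\log^2(1+w))$, using $\hat\Lambda(k)=1-\tfrac1{12}k^2+\mathcal{O}(k^4)$, and recalling that $\hat v$ vanishes on $\mathcal{I}_p$, the equation (\ref{fixed-point-Fourier-2}) divided by $\epsilon^2$ becomes, for $U(x):=u(x/\epsilon)$, $V(x):=v(x/\epsilon)$, $W:=U+V$,
\[
\Bigl(\lambda+\tfrac1{12}\kappa^2\Bigr)\hat U(\kappa)=\mathcal{F}(g(U))(\kappa)+\mathcal{E}(\kappa),\qquad \operatorname{supp}\hat U\subset[-\epsilon^{p-1},\epsilon^{p-1}],
\]
the inverse Fourier transform of whose left-hand side is exactly the stationary log--KdV operator applied to $U$. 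The remainder $\mathcal{E}$ has four pieces: (i) the quartic term $\tfrac1{360}\epsilon^2\kappa^4\hat U(\kappa)$ from the expansion of $\hat\Lambda$, of size $\mathcal{O}(\epsilon^{4p-2})$ in $L^2$ on $\operatorname{supp}\hat U$; (ii) $(\hat\Lambda(\epsilon\kappa)-1)\mathcal{F}(g(U))$, of size $\mathcal{O}(\epsilon^{2p})$; (iii) the coupling through the nonlinearity, $\hat\Lambda(\epsilon\kappa)\mathcal{F}(g(W)-g(U))$, which by Lemma \ref{lemma-bound-1} is $\mathcal{O}(\|V\|_{L^2})=\mathcal{O}(\epsilon^{2-2p})$; and (iv) $\epsilon^{-2}\hat\Lambda(\epsilon\kappa)\mathcal{F}(R_\epsilon(W))$, of size $\mathcal{O}(\epsilon^2)$. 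For $p\in(0,1)$ — in particular for $p\in(\tfrac58,\tfrac68)$ — all four are $\mathcal{O}(\max\{\epsilon^{4p-2},\epsilon^{2-2p}\})$ in $L^2_x$.

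I would \emph{not} iterate this equation directly, because the super-quadratic nonlinearity makes the right-hand side non-contractive near the fixed point (this is the eigenvalue $\mu_{-1}>1$ of $S_\lambda$ in Proposition \ref{proposition-fixed-point-operator}). Instead, substitute $U=W_{\rm stat}+A$ with $\operatorname{supp}\hat A\subset[-\epsilon^{p-1},\epsilon^{p-1}]$, where $W_{\rm stat}$ solves (\ref{stationaryLogKdV}) (one may freely replace $W_{\rm stat}$ by its truncation to $|\kappa|\leq\epsilon^{p-1}$, the difference being $\mathcal{O}(\epsilon^N)$ for every $N$ since $W_{\rm stat}\in H^\infty$). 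Subtracting the equation for $W_{\rm stat}$ and expanding $g(W_{\rm stat}+A)=g(W_{\rm stat})+g'(W_{\rm stat})A+\mathcal{Q}(A)$ with $g'(w)=1+\log(1+w)$ and $\mathcal{Q}(A)=\int_0^1(1-s)g''(W_{\rm stat}+sA)\,ds\,A^2$ — uniformly bounded as $\mathcal{Q}(A)=\mathcal{O}(\|A\|_{L^\infty}\,\|A\|_{L^2\cap L^\infty})$ provided $\|A\|_{L^\infty}$ is small enough that $W_{\rm stat}+sA$ stays away from $-1$ — one obtains $(-\tfrac1{12}\partial_x^2+\lambda-1)A=\log(1+W_{\rm stat})A+\mathcal{Q}(A)+\mathcal{E}$, hence, after applying the bounded inverse of $-\tfrac1{12}\partial_x^2+\lambda-1$ ($\lambda>1$) and reprojecting onto $|\kappa|\leq\epsilon^{p-1}$,
\[
\bigl(I-P_{\mathcal{I}}S_\lambda\bigr)A=P_{\mathcal{I}}\Bigl(-\tfrac1{12}\partial_x^2+\lambda-1\Bigr)^{-1}\bigl[\mathcal{Q}(A)+\mathcal{E}\bigr]=:\Psi(A),
\]
with $S_\lambda$ as in (\ref{fixed-point-operator}) and $P_{\mathcal{I}}$ the frequency cut-off. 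I would work on the subspace of \emph{even} functions, consistently with Theorem \ref{theorem-stationary}: there $W_0=W_{\rm stat}'$ is odd, so by Proposition \ref{proposition-fixed-point-operator} $I-S_\lambda$ is invertible with bounded inverse, and since the symbol of $S_\lambda$ decays at high frequency its band-limited compression $I-P_{\mathcal{I}}S_\lambda$ on the even subspace is an $\mathcal{O}(\epsilon^{2(1-p)})$-perturbation of $I-S_\lambda$, hence still invertible with $\epsilon$-uniformly bounded inverse for small $\epsilon$. The problem becomes the fixed-point equation $A=\Phi(A):=\bigl(I-P_{\mathcal{I}}S_\lambda\bigr)^{-1}\Psi(A)$ on even, band-limited functions.

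The last step is a contraction argument for $\Phi$ on a small ball in $L^2_x\cap L^\infty_x$ whose $L^2$-radius is $\mathcal{O}(\max\{\epsilon^{4p-2},\epsilon^{2-2p}\})$; the $L^\infty$-control follows from the corresponding $L^\infty$-estimates of the source terms — Bernstein's inequality for the band-limited pieces, and Lemma \ref{lemma-bound-1} together with $L^\infty$-boundedness of $(-\tfrac1{12}\partial_x^2+\lambda-1)^{-1}$ for the $v$-coupling — and this is where the factor $\|W_{\rm stat}(\epsilon\cdot)\|_{L^2}=\mathcal{O}(\epsilon^{-1/2})$ in (\ref{bound-2-again}) comes from. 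Self-mapping holds because $\|\Phi(0)\|\leq C\max\{\epsilon^{4p-2},\epsilon^{2-2p}\}$ by the first paragraph, while $\mathcal{Q}$ contributes a term quadratically small in the radius, hence negligible. For the contraction, $\|\Phi(A)-\Phi(A')\|\leq C\bigl(\|\mathcal{Q}(A)-\mathcal{Q}(A')\|+\|\mathcal{E}[A]-\mathcal{E}[A']\|\bigr)$: here $\|\mathcal{Q}(A)-\mathcal{Q}(A')\|\lesssim(\|A\|+\|A'\|)\|A-A'\|=o(1)\,\|A-A'\|$ on the ball, and in the second term the $A$-dependence of $\mathcal{E}$ enters either with an explicit positive power of $\epsilon$ (pieces (i), (ii), (iv)) or only through the $C^1$ map $u\mapsto v$ of Lemma \ref{lemma-bound-1}, whose derivative is $\mathcal{O}(\epsilon^{2-2p})$ (piece (iii)); so the Lipschitz constant of $\Phi$ is $o(1)$, in particular $<1$ for $\epsilon$ small. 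Banach's fixed-point theorem then yields a unique such $A$ with $\|A\|\leq 2\|\Phi(0)\|$; undoing the rescaling (so that $u(z)=W_{\rm stat}(\epsilon z)+A(\epsilon z)+\mathcal{O}(\epsilon^N)$) produces a solution $u$ of (\ref{fixed-point-Fourier-2}) obeying (\ref{bound-2-again}). Finally, since $W_{\rm stat}\geq0$ with $\|W_{\rm stat}\|_{L^\infty}$ a $\lambda$-dependent $\mathcal{O}(1)$ constant and the correction is $o(1)$ in $L^\infty$ for $p\in(\tfrac58,\tfrac68)$, the solution lies in $B_{R,r}$ with $R$ depending only on $\lambda$ and $r=\mathcal{O}(\epsilon^{1/6})$ (at the optimal $p=2/3$), and its uniqueness within $B_{R,r}$ among functions obeying (\ref{bound-2-again}) is precisely the uniqueness furnished by the contraction.

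The main obstacle — and the reason for the regrouping above — is the loss of contractivity of the bare nonlinear map. The two delicate points in carrying out the regrouping are: (a) that $I-S_\lambda$ has the one-dimensional kernel $\operatorname{span}\{W_0\}$ coming from translation invariance, which forces the analysis onto the even subspace and requires one to verify $\epsilon$-uniform invertibility of the band-limited compression $I-P_{\mathcal{I}}S_\lambda$; and (b) the error bookkeeping, showing that \emph{every} remainder is $\mathcal{O}(\max\{\epsilon^{4p-2},\epsilon^{2-2p}\})$ — the two competing sources being the quartic term in the expansion of $\hat\Lambda$ (contributing $\epsilon^{4p-2}$ on a window of width $\epsilon^p$) and the back-reaction of the high-frequency part $v$ through the nonlinearity (contributing $\epsilon^{2-2p}$) — all the while keeping the nonlinearity uniformly Lipschitz near the fixed point, for which the $L^\infty$ (not merely $L^2$) control is essential so that $u$ stays bounded away from $-1$.
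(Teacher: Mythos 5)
Your proposal is correct and follows essentially the same route as the paper: the same splitting $\tilde V_\epsilon'(w)=w+\epsilon^2 g(w)+M_\epsilon(w)$ and expansion of $\hat\Lambda$, the same bookkeeping yielding the two competing error sizes $\epsilon^{4p-2}$ and $\epsilon^{2-2p}$, the same observation that the bare map is non-contractive (the eigenvalue $\mu_{-1}>1$), the same regrouping around the band-limited truncation of $W_{\rm stat}$, the same restriction to even functions, and the same $\mathcal{O}(\epsilon^{2-2p})$ perturbation bound showing that $I-S_{\lambda,p}$ inherits the bounded invertibility of $I-S_\lambda$. The only (cosmetic) differences are that you work in the rescaled variable $x=\epsilon z$ and close the argument with an explicit Banach contraction, where the paper invokes the implicit function theorem for $f_\epsilon(u)=h_\epsilon(u,v)$ after bounding $\left[\partial_u f_\epsilon(W_{\rm app})\right]^{-1}$.
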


\begin{proof}
By the Taylor expansion (\ref{expansion-1}), we can represent $\hat{\Lambda}(k)$ for any $k \in \mathcal{I}_p$ as
$$
\hat{\Lambda}(k) = \frac{1 + \hat{\Lambda}_{\rm Rem}(k)}{1+\frac{1}{12} k^2}, \quad |k| \leq \epsilon^{p},
$$
where the remainder term satisfies the bound
$$
\| \chi_{\mathcal{I}_p} \hat{\Lambda}_{\rm Rem} \|_{L^{\infty}} \leq C_{\Lambda} \epsilon^{4p},
$$
for a positive $\epsilon$-independent constant $C_{\Lambda}$. We now write
$\tilde{V}_{\epsilon}'(w) = w + \epsilon^2 (1+w) \log(1+w) + M_{\epsilon}(w)$, where
\begin{eqnarray*}
M_{\epsilon}(w) & = & (1+w)^{1+\epsilon^2} - 1 - w - \epsilon^2 (1+w) \log(1+w) \\
& = & \log^2(1 + w) \int_0^{\epsilon^2} \left( \int_0^x (1 + w)^{1+y} dy \right) dx.
\end{eqnarray*}
Recall that the function $f(w):= \log(1+w)/w$ is strictly decreasing for $w > -1$ with $f(0) = 1$.
Therefore, for any $r \in (-1,0)$, there is a positive constant $C_r$ such that
$$
|M_{\epsilon}(w)| \leq \frac{1}{2} \epsilon^4 C_r (1 + w)^{1+\epsilon^2} w^2, \quad w \geq r,
$$
and
$$
|M_{\epsilon}(w_1) - M_{\epsilon}(w_2)| \leq \epsilon^4 C_r \left(1 + \max\{w_1,w_2\}\right)^{1+\epsilon^2}
\max\{w_1,w_2\} |w_1-w_2|, \quad w_1, w_2 \geq r.
$$
Note again that $C_r$ may be chosen so that $C_r \Tend{r}{0} 1$.

Equation (\ref{fixed-point-Fourier-2}) can be rewritten in the equivalent form
\begin{equation}
\label{residual}
\left( \lambda + \frac{k^2}{12 \epsilon^2} \right) \hat{u}(k) -
\chi_{\mathcal{I}_p}(k) \mathcal{F}((1+u+v) \log(1+u+v))(k) =
\hat{H}_{\epsilon}(\hat{u},\hat{v})(k),
\end{equation}
where
$$
\hat{H}_{\epsilon}(\hat{u},\hat{v})(k) := -\frac{\lambda k^2}{12} \hat{u}(k)
+ \epsilon^{-2} \chi_{\mathcal{I}_p}(k) \mathcal{F}(M_{\epsilon}(u+v))(k)
+ \epsilon^{-2} \chi_{\mathcal{I}_p}(k) \hat{\Lambda}_{\rm Rem}(k) \mathcal{F}(\tilde{V}_{\epsilon}'(u+v))(k).
$$
It follows from the above estimates that for sufficiently small $\epsilon$,
the right-hand side of \eqref{residual} satisfies the estimate
\begin{eqnarray*}
\nonumber
\frac{1}{\sqrt{2\pi}} \| \hat{H}_{\epsilon}(\hat{u},\hat{v}) \|_{L^2} & \leq &
\frac{\lambda \epsilon^{2p}}{12} \| u \|_{L^2} + \frac{1}{2} \epsilon^2 C_r (1 + \| u + v \|_{L^{\infty}})^{1+\epsilon^2}
\| u+v \|_{L^{\infty}} \| u + v \|_{L^2} \\
\nonumber
& \phantom{t} & \phantom{text} + C_{\Lambda} \epsilon^{4p-2}
(1 + \epsilon^2 C_r (1 + \| u + v \|_{L^{\infty}})^{1+\epsilon^2})
\| u + v \|_{L^2}.
\end{eqnarray*}
Recall that if $p < 1$, then $\epsilon^{4p-2} \gg \epsilon^{2p} \gg \epsilon^2$ as $\epsilon \to 0$.
Let $v \in L^2(\mathbb{R}) \cap L^{\infty}(\mathbb{R})$
be uniquely expressed in terms of $u \in L^2(\mathbb{R}) \cap L^{\infty}(\mathbb{R})$
by Lemma \ref{lemma-bound-1}. Then, we obtain
\begin{eqnarray}
\frac{1}{\sqrt{2\pi}} \| \hat{H}_{\epsilon}(\hat{u},\hat{v}) \|_{L^2} \leq C_{R,r} \epsilon^{4p-2}
\left(1 + \epsilon^{2-2p} \| u \|_{L^2} \right)^{1+\epsilon^2} \| u \|_{L^2}, \label{tech-bound-1}
\end{eqnarray}
where the positive constant $C_{R,r}$ is independent of $\epsilon$ and $\| u \|_{L^2}$.
Since $\mathcal{I}_p$ is compact, we also have
\begin{eqnarray}
\label{tech-bound-2}
\frac{1}{2\pi} \| \hat{H}_{\epsilon}(\hat{u},\hat{v}) \|_{L^1} \leq
\frac{\epsilon^{p/2}}{\sqrt{2} \pi} \| \hat{H}_{\epsilon}(\hat{u},\hat{v}) \|_{L^2}.
\end{eqnarray}

Let us define the set 
$$
B_{R,r,C} := \left\{ u \in B_{R,r} : \quad \| u \|_{L^2} \leq C \epsilon^{-1/2} \right\},
$$
for some $\epsilon$-independent constant $C > \| W_{\rm stat} \|_{L^2}$.
If $p$ belongs to the bounds (\ref{constraints-on-p}) and $u$ belongs to $B_{R,r,C}$, 
then the term $\epsilon^{2-2p} \| u \|_{L^2}$ is bounded by a small constant as $\epsilon \to 0$.
For convenience, we will simply omit this term in the upper bounds. In what follows, 
we use a fixed-point argument in $B_{R,r,C}$, which ensures that $u$ satisfies (\ref{bound-2-again}).

Let $H_{\epsilon}(u,v) := \mathcal{F}^{-1} \hat{H}_{\epsilon}(\hat{u},\hat{v})$.
From (\ref{tech-bound-1}) and (\ref{tech-bound-2}) for $u \in B_{R,r,C}$, we have
\begin{equation}
\label{tech-bound-H}
\| H_{\epsilon}(u,v) \|_{L^2 \cap L^{\infty}} \leq C_{R,r} \epsilon^{4p-2} \| u \|_{L^2}.
\end{equation}
Since the mapping  $u \mapsto v$ is differentiable and all nonlinear functions
in $H_{\epsilon}(u,v)$ are differentiable both in $u$ and $v$, the remainder term $H_{\epsilon}(u,v)$ is differentiable
with respect to $u$ in $L^2(\mathbb{R}) \cap L^{\infty}(\mathbb{R})$.

Next, we study the left-hand side of (\ref{residual}). We write it as
$\hat{F}_{\epsilon}(\hat{u}) + \hat{G}_{\epsilon}(\hat{u},\hat{v})$, where
$$
\hat{F}_{\epsilon}(\hat{u})(k) := \left( \lambda + \frac{k^2}{12 \epsilon^2} \right) \hat{u}(k) -
\chi_{\mathcal{I}_p}(k) \mathcal{F}((1+u) \log(1+u))(k)
$$
and
$$
\hat{G}_{\epsilon}(\hat{u},\hat{v})(k) := -\chi_{\mathcal{I}_p}(k) \mathcal{F}((1+u+v) \log(1+u+v) - (1+u) \log(1+u))(k).
$$
Since the function $f(w) := (1+w)\log(1+w)$ is differentiable for any $w > -1$
with $f'(w) = 1 + \log(1+w)$, we have the bound
\begin{eqnarray*}
\frac{1}{\sqrt{2\pi}} \| \hat{G}_{\epsilon}(\hat{u},\hat{v}) \|_{L^2} & \leq & \| (1+u+v) \log(1+u+v) - (1+u) \log(1+u) \|_{L^2} \\
& \leq & (1 + C_r \| \log(1+u+v) \|_{L^{\infty}}) \| v \|_{L^2} \\
& \leq & (1 + C_r \|u+v \|_{L^{\infty}}) \| v \|_{L^2}.
\end{eqnarray*}
Using the bound (\ref{bound-1-again}) from Lemma \ref{lemma-bound-1}
and a similar bound for $\| \hat{G}_{\epsilon}(\hat{u},\hat{v}) \|_{L^1}$, 
we hence have for $u \in B_{R,r,C}$,
\begin{eqnarray}
\label{tech-bound-3}
\frac{1}{\sqrt{2\pi}} \| \hat{G}_{\epsilon}(\hat{u},\hat{v}) \|_{L^2 \cap L^1} \leq C_{R,r} \epsilon^{2-2p} \| u \|_{L^2}.
\end{eqnarray}
Let $G_{\epsilon}(u,v) := \mathcal{F}^{-1} \hat{G}_{\epsilon}(\hat{u},\hat{v})$.
From (\ref{tech-bound-3}), we have
\begin{equation}
\label{tech-bound-G}
\| G_{\epsilon}(u,v) \|_{L^2 \cap L^{\infty}} \leq C_{R,r} \epsilon^{2-2p} \| u \|_{L^2}.
\end{equation}
Again, $G_{\epsilon}(u,v(u))$ is differentiable with respect to $u$ in $L^2(\mathbb{R}) \cap L^{\infty}(\mathbb{R})$.

It remains to study the leading-order part $\hat{F}_{\epsilon}(\hat{u})$, where we apply arguments based on
the implicit function theorem. Let us define $F_{\epsilon}(u) := \mathcal{F}^{-1}(\hat{F}_{\epsilon}(\hat{u}))$. For any $\epsilon > 0$,
the nonlinear operator $F_{\epsilon}(u)$ is a bounded operator from a subset of
$L^2(\mathbb{R}) \cap L^{\infty}(\mathbb{R})$ to $L^2(\mathbb{R}) \cap L^{\infty}(\mathbb{R})$ thanks to the bounds
\begin{eqnarray*}
\frac{1}{\sqrt{2\pi}} \| \hat{F}_{\epsilon}(\hat{u}) \|_{L^2} \leq
\left( \lambda + \frac{1}{12 \epsilon^{2(1-p)}} + C_r (1  + \| u \|_{L^{\infty}}) \right) \| u \|_{L^2}
\end{eqnarray*}
and a similar bound for $\| \hat{F}_{\epsilon}(\hat{u}) \|_{L^1}$. 
The map $u \mapsto F_{\epsilon}(u)$ is $C^{\infty}$ thanks to the smoothness of the function
$u\mapsto\log(1+u)$ on $(-1,\infty)$.

Consider the solitary wave $W_{\rm stat}$ solution to the stationary log--KdV equation (\ref{stationaryLogKdV})
from Proposition \ref{proposition-soliton}, and let $w_{\rm lead} = W_{\rm stat}(\epsilon \cdot)$
be the corresponding solution to \eqref{stationaryLogKdVFourier}. We have the relationship between the
Fourier transforms of $w_{\rm lead}$ and $W_{\rm stat}$:
$$
\widehat{w}_{\rm lead}(k) = \int_{\infty}^{\infty} W_{\rm stat}(\epsilon z) \, e^{-i k z} dz =
\frac{1}{\epsilon} \widehat{W_{\rm stat}}\left(\frac{k}\epsilon\right).
$$
We further define an approximation of $W_{\rm stat}$ by truncating the Fourier transform
$\widehat{w_{\rm lead}}$ on $\mathcal{I}_p$, that is,
\begin{equation}
\label{approximation-Fourier}
W_{\rm app}(z) := \mathcal{F}^{-1}(\chi_{\mathcal{I}_p} \hat{w}_{\rm sol})(z) =
\frac{1}{2\pi} \int_{\mathcal{I}_p} \widehat{w_{\rm lead}}(k) e^{ik z} dk
= \frac{1}{2\pi}
\int_{-\epsilon^{p-1}}^{\epsilon^{p-1}} \widehat{W_{\rm stat}}(\kappa) e^{i \kappa \epsilon z} d \kappa.
\end{equation}
Since $W_{\rm stat} \in H^{\infty}(\mathbb{R})$ by Proposition \ref{proposition-soliton},
Sobolev's embedding implies that $W_{\rm stat} \in C^{\infty}(\mathbb{R})$,
which then implies that $\widehat{W_{\rm stat}}$ decays faster than any power at infinity.
It follows from (\ref{approximation-Fourier}) for $p < 1$ that
the integration interval extends to the entire line as $\epsilon \to 0$.
As a result, for any $s > 0$, we have an $\epsilon$-independent positive constant $C_s$
such that for all sufficiently small $\epsilon > 0$,
\begin{equation}
\label{tech-bound-4}
\| W_{\rm app} - W_{\rm stat} \|_{L^2 \cap L^{\infty}} \leq C_s \epsilon^s.
\end{equation}
The nonlinear operator $F_{\epsilon}(u)$ evaluated at $u = W_{\rm app}$ is given
in the Fourier form by
$$
\mathcal{F} [F_{\epsilon}(W_{\rm app})](k) =
\chi_{\mathcal{I}_p}(k) \mathcal{F}((1+W_{\rm app}) \log(1+W_{\rm app}))(k) -
\chi_{\mathcal{I}_p}(k) \mathcal{F}((1+W_{\rm stat}) \log(1+W_{\rm stat}))(k).
$$
Consequently, thanks to the smoothness of the map $u \mapsto F_{\epsilon}(u)$
in $L^2(\mathbb{R}) \cap L^{\infty}(\mathbb{R})$ and the bound (\ref{tech-bound-4}),
we obtain
\begin{equation}
\label{tech-bound-5}
\| F_{\epsilon}(W_{\rm app}) \|_{L^2 \cap L^{\infty}} \leq C_s \epsilon^s,
\end{equation}
for any $s > 0$ and sufficiently small $\epsilon$.

We rewrite equation (\ref{residual}) as the implicit equation
\begin{equation}
\label{residual-new}
f_{\epsilon}(u) = h_{\epsilon}(u,v),
\end{equation}
where
\begin{eqnarray*}
f_{\epsilon}(u)
& := & \mathcal{F}^{-1} \chi_{\mathcal{I}_p} \left(\lambda - 1 + \frac{k^2}{12 \epsilon^2} \right)^{-1}
\left( \hat{F}_{\epsilon}(\hat{u}) - \hat{F}_{\epsilon}(\widehat{W_{\rm app}}) \right), \\
h_{\epsilon}(u,v)
& := & \mathcal{F}^{-1} \chi_{\mathcal{I}_p} \left(\lambda - 1 + \frac{k^2}{12 \epsilon^2} \right)^{-1}
\left( \hat{H}_{\epsilon}(\hat{u},\hat{v}) - \hat{G}_{\epsilon}(\hat{u},\hat{v})
- \hat{F}_{\epsilon}(\widehat{W_{\rm app}}) \right).
\end{eqnarray*}
Since $\lambda > 1$, we infer from the bounds (\ref{tech-bound-H}), (\ref{tech-bound-G})
and (\ref{tech-bound-5}) that for $u \in B_{R,r,C}$, 
\begin{equation}
\label{tech-bound-11}
\| h_{\epsilon}(u,v) \|_{L^2 \cap L^{\infty}} \leq C_{R,r,\lambda} \max\{\epsilon^{4p-2},\epsilon^{2-2p}\}  \| u \|_{L^2},
\end{equation}
where the positive constant $C_{R,r,\lambda}$ is independent of $\epsilon$ and $\| u \|_{L^2}$.
Therefore, the right-hand side of (\ref{residual-new}) is small in $L^2(\mathbb{R}) \cap L^{\infty}(\mathbb{R})$ norm,
if $p$ satisfies the bounds (\ref{constraints-on-p}) and $u$ belongs to $B_{R,r,C}$. 
The left-hand side of (\ref{residual-new}) is zero at $u = W_{\rm app}$.

Let us now consider the linearization operator $\partial_u f_{\epsilon}(W_{\rm app})$. In
the Fourier form, the linearization operator acting on $U$ is given by
$$
\mathcal{F} [\partial_u f_{\epsilon}(W_{\rm app}) U](k) :=
\hat{U}(k) - \chi_{\mathcal{I}_p}(k)
\left( \lambda - 1 + \frac{k^2}{12 \epsilon^2} \right)^{-1}
 \mathcal{F}(\log(1+W_{\rm app}) U)(k) .
$$
Note that $W_{\rm app}(z)$ is an even function of $z$ if $W_{\rm stat}(x)$ is an even
function of $x$ because $\widehat{W_{\rm stat}}(k)$ is an even function of $k$ and
the truncation in the Fourier domain is taken symmetrically around $k = 0$.
Also note that the fixed-point problem (\ref{fixed-point-Fourier}) preserves
the parity property in the space of even functions. Therefore, we can consider $u$ or $U := u - W_{\rm stat}$
in the space of even functions.

Recall the unbounded Schr\"{o}dinger operator $L_{\lambda}$ given by (\ref{Schrodinger-operator})
and the bounded operator $S_{\lambda}$ given by (\ref{fixed-point-operator}).
Let us now define the bounded operator $S_{\lambda,p}$ in the Fourier form by
$$
[\hat{S}_{\lambda,p} \hat{U}](k) :=
\chi_{\mathcal{I}_p}(k) \left( \lambda - 1 + \frac{k^2}{12 \epsilon^2} \right)^{-1}
\mathcal{F}(\log(1+W_{\rm stat}) U)(k).
$$
We obtain the bound
\begin{eqnarray*}
\frac{1}{2\pi} \| (\hat{S}_{\lambda} - \hat{S}_{\lambda,p}) \hat{U} \|^2_{L^2} & = &
\frac{1}{2\pi} \int_{\mathcal{J}_p} \frac{1}{(\lambda - 1 + \frac{k^2}{12 \epsilon^2})^2}
\left| \mathcal{F}(\log(1+W_{\rm stat}) U)(k) \right|^2 dk \\
& \leq & (12 \epsilon^{2-2p})^2 \| \log(1+W_{\rm stat}) U \|^2_{L^2},
\end{eqnarray*}
which yields, thanks to the positivity of $W$,
\begin{eqnarray}
\label{tech-bound-9}
\| (S_{\lambda} - S_{\lambda,p}) U \|_{L^2} \leq
12 \epsilon^{2- 2p} \| W_{\rm stat} \|_{L^{\infty}} \| U \|_{L^2}.
\end{eqnarray}

By Proposition \ref{proposition-fixed-point-operator}, the linear operator $I - S_{\lambda}$
is invertible with bounded inverse on the subspace of even functions in $L^2(\mathbb{R})$.
Thanks to the bound (\ref{tech-bound-9}), the linear operator $I - S_{\lambda,p}$
is also invertible with bounded inverse on the subspace of even functions in $L^2(\mathbb{R})$.
Finally, thanks to the bound (\ref{tech-bound-4}), the linearized operator  $\partial_u f_{\epsilon}(W_{\rm app})$ is
also invertible with bounded inverse on the subspace of even functions in $L^2(\mathbb{R})$.
In other words, there is a positive $\epsilon$-independent constant $C_{\lambda}$
such that for any sufficiently small
$\epsilon$ and any even function $h$ in $L^2(\mathbb{R})$, we have
$$
\| \left[ \partial_u f_{\epsilon}(W_{\rm app}) \right]^{-1} h \|_{L^2} \leq C_{\lambda}  \| h \|_{L^2}.
$$
Since $\mathcal{I}_p$ is compact,  we then have
\begin{equation}
\label{tech-bound-6}
\|\left[ \partial_u f_{\epsilon}(W_{\rm app}) \right]^{-1} h \|_{L^2 \cap L^{\infty}} \leq C_{\lambda}  \| h \|_{L^2}.
\end{equation}

Writing $u = W_{\rm app} + U$, we can now apply the standard implicit function theorem
to obtain a unique solution $U$ to the implicit equation
(\ref{residual-new}) in $L^2(\mathbb{R}) \cap L^{\infty}(\mathbb{R})$
close to the zero solution for small $\epsilon > 0$. In view of the
bounds (\ref{tech-bound-11}) and (\ref{tech-bound-6}), the solution satisfies the bound
$$
\| U \|_{L^2 \cap L^{\infty}} \leq
C_{R,r,\lambda} \max\{\epsilon^{4p-2},\epsilon^{2-2p}\} \| W_{\rm app}(\epsilon \cdot) \|_{L^2},
$$
where the positive constant $C_{R,r,\lambda}$ is $\epsilon$-independent. This bound yields (\ref{bound-2-again})
thanks to the proximity between $W_{\rm stat}$ and $W_{\rm app}$ given by the bound (\ref{tech-bound-4}).
\end{proof}


\subsection{Proof of the bound (\ref{bound-major-derivatives})}

Here we prove the bound (\ref{bound-major-derivatives}) for $k = 1$.
The proof extends to every $k \in \mathbb{N}$ by similar arguments and by induction.

To control the derivative of the solution $w = u + v$ of the fixed-point equation (\ref{fixed-point})
in the $L^2(\mathbb{R}) \cap L^{\infty}(\mathbb{R})$ norm,
we multiply the two equations (\ref{fixed-point-Fourier-1}) and (\ref{fixed-point-Fourier-2})
by $k$. After multiplication by $k$, equation (\ref{outer-problem}) for $k \in \mathcal{J}_p$ is rewritten in the form
\begin{equation}
\label{outer-problem-derivative}
k \hat{v}(k) = \frac{1}{1 + \epsilon^2 \lambda}  \hat{\Lambda}_{\mathcal{J}_p}(k)
\left( k \hat{v}(k) + \chi_{\mathcal{J}_p}(k) k \mathcal{F}(N_{\epsilon}(u+v))(k) \right),
\quad k \in \mathcal{J}_p.
\end{equation}
We repeat the estimates in the proof of Lemma \ref{lemma-bound-1},
after commutation of the Fourier transform and of the multiplication operator by $k$,
which becomes the derivative operator w.r.t. $z$ applied to
the nonlinear function $N_{\epsilon}(w)$. The nonlinearity $w \mapsto N_{\epsilon}(w)$ is smooth
since $w \geq r > -1$.

Let $B_{R,r}'$ be the set
\begin{equation}
\label{ball-R-derivative}
B_{R,r}' := \{ u \in B_{R,r} : \,\,
\partial_z u \in L^2(\mathbb{R}) \cap L^{\infty}(\mathbb{R}), \,\,
\| \partial_z u \|_{L^{\infty}} \leq R \}.
\end{equation}
By the same technique as in the proof of Lemma \ref{lemma-bound-1}, we obtain from equation
(\ref{outer-problem-derivative}) for sufficiently small $\epsilon$ that
for any fixed $\lambda > 1$, $p \in (0,1)$, $R > 0$, $r \in (-1,0)$, and any $u \in B_{R,r}'$,
the unique solution to equation (\ref{outer-problem}) satisfies, in
addition to the bound (\ref{bound-1-again}),
\begin{equation}
\label{bound-1-derivative}
\| \partial_z v \|_{L^2 \cap L^{\infty}} \leq C_{R,r} \epsilon^{2-2p} \| \partial_z u \|_{L^2},
\end{equation}
where the positive constant $C_{R,r}$ is independent of $\epsilon$ and $\| u \|_{H^1}$.

We then proceed with analysis of equation (\ref{residual}), which we also multiply by $k$.
From the same arguments as in the proof of Lemma \ref{lemma-bound-2}, we obtain, in addition
to the bounds (\ref{tech-bound-H}) and (\ref{tech-bound-G}),
\begin{equation}
\label{tech-bound-H-derivative}
\| \partial_z H_{\epsilon}(u,v) \|_{L^2 \cap L^{\infty}} \leq C_{R,r} \epsilon^{4p-2} \| \partial_z u \|_{L^2}
\end{equation}
and
\begin{equation}
\label{tech-bound-G-derivative}
\| \partial_z G_{\epsilon}(u,v) \|_{L^2 \cap L^{\infty}} \leq C_{R,r} \epsilon^{2-2p} \| \partial_z u \|_{L^2},
\end{equation}
where $v$ is expressed from equation (\ref{outer-problem})
using the bounds (\ref{bound-1-again}) and (\ref{bound-1-derivative}). Applying now
derivative in $z$ to the implicit equation (\ref{residual-new}), we obtain
\begin{equation}
\label{residual-derivative}
\partial_z f_{\epsilon}(u) = \partial_z h_{\epsilon}(u,v),
\end{equation}
where $\partial_z h_{\epsilon}(u,v)$ satisfies the bound
\begin{equation}
\label{tech-bound-derivative-1}
\| \partial_z h_{\epsilon}(u,v) \|_{L^2 \cap L^{\infty}} \leq
C_{R,r,\lambda} \max\{\epsilon^{4p-2},\epsilon^{2-2p}\} \| \partial_z u \|_{L^2}.
\end{equation}
The derivative of the linearized operator
$\partial_u f_{\epsilon}(W_{\rm app})$ applied to $U$ is, by the product rule,
$$
\partial_z \left( \partial_u f_{\epsilon}(W_{\rm app}) \, U \right) =
\partial_u f_{\epsilon}(W_{\rm app}) \partial_z U
+ \left( \partial_z \partial_u f_{\epsilon}(W_{\rm app}) \right) U,
$$
where the second term is bounded as
\begin{equation}
\label{tech-bound-derivative-2}
\|  \left( \partial_z \partial_u f_{\epsilon}(W_{\rm app}) \right) U \|_{L^2}
\leq C_{R,r,\lambda} \| \partial_z W_{\rm app}(\epsilon \cdot) \|_{L^{\infty}} \| U \|_{L^2}.
\end{equation}
Using the bounds (\ref{tech-bound-6}), (\ref{tech-bound-derivative-1})
and (\ref{tech-bound-derivative-2}), we obtain
from equation (\ref{residual-derivative}) for sufficiently small $\epsilon$ that
for any fixed $\lambda > 1$, the unique solution to equation (\ref{residual-new})
satisfies, in addition to the bound (\ref{bound-2-again}),
\begin{equation*}
\| \partial_z u - \partial_z W_{\rm stat}(\epsilon \cdot) \|_{L^2 \cap L^{\infty}} \leq
C_{R,r,\lambda} \max\{\epsilon^{4p-2},\epsilon^{2-2p}\} \| \partial_z W_{\rm app}(\epsilon \cdot) \|_{L^2}
\end{equation*}
where the positive constant $C_{R,r,\lambda}$ is independent of $\epsilon$. This bound yields
(\ref{bound-major-derivatives}) for $k = 1$, since the error bound is optimal for $p = \frac{2}{3}$ and
$$
\| \partial_z W_{\rm app}(\epsilon \cdot) \|_{L^2} \leq C \epsilon^{1-1/2},
$$
where the positive constant $C$ is independent of $\epsilon$.

\section{Stability of FPU travelling waves near the log--KdV limit}

This section presents the proof of Theorem \ref{theorem-stability}.

Let $(w_{\rm trav},p_{\rm trav}) \in C^1(\mathbb{R},l^2(\mathbb{Z}))$
denote the travelling wave (\ref{solitary-wave-FPU}) solution to the FPU lattice (\ref{FPU-lattice})
with the squared speed $c^2 = 1 + \epsilon^2 \lambda$. The amplitudes $(w_{\rm stat},p_{\rm stat})$
of the travelling wave are solutions to the system of advance equations
\begin{equation}
\label{diff-eq-q}
\left\{ \begin{array}{l}
-c w_{\rm stat}'(z) = p_{\rm stat}(z+1) - p_{\rm stat}(z), \\
- c p_{\rm stat}'(z) = \tilde{V}'_{\epsilon}(w_{\rm stat}(n-ct))
- \tilde{V}'_{\epsilon}(w_{\rm stat}(n-1-ct)),
\end{array} \right.
\quad z \in \mathbb{R}.
\end{equation}
Properties of $w_{\rm stat}$ are described by Theorem \ref{theorem-stationary}
for sufficiently small $\epsilon$.

For any fixed $c$, we decompose
$$
w(t) = w_{\rm trav}(t) + \mathcal{W}(t), \quad p(t) = p_{\rm trav}(t) + \mathcal{P}(t),
$$
and rewrite the system of FPU lattice equations (\ref{FPU-lattice}) in the perturbed form
\begin{eqnarray}
\left\{
\begin{split}
& \dot{\mathcal{W}}_n = \mathcal{P}_{n+1}-\mathcal{P}_n, \\
& \dot{\mathcal{P}}_n = \tilde{V}_{\epsilon}''(w_{\rm stat}(n-ct)) \mathcal{W}_{n}
- \tilde{V}_{\epsilon}''(w_{\rm stat}(n-1-ct))\mathcal{W}_{n-1} \\
& \phantom{text}  + \frac{1}{2} \tilde{V}_{\epsilon}'''(w_{\rm stat}(n-ct)) \mathcal{W}^2_{n}
- \frac{1}{2} \tilde{V}_{\epsilon}'''(w_{\rm stat}(n-1-ct))\mathcal{W}^2_{n-1} \\
& \phantom{text} + R_n(\mathcal{W}), \end{split} \right.
\label{FPU-lattice-perturbed}
\end{eqnarray}
where the remainder term is cubic in $\mathcal{W}$ thanks to the smoothness of $\tilde{V}_{\epsilon}$
on $(-1,\infty)$.
Therefore, in the perturbed form (\ref{FPU-lattice-perturbed}), it is assumed that the
solution $w$ remains within the a priori bounds (\ref{apriori-bound-time}),
which happens if $\mathcal{W}_n$ is sufficiently small for every $n \in \mathbb{Z}$.

Let $B_{\rho}$ denote a small ball in $l^2(\mathbb{Z})$ centered at zero with radius $\rho > 0$.
Thanks to the embedding of
$l^2(\mathbb{Z})$ into $l^{\infty}(\mathbb{Z})$, for any $\rho > 0$, there is a positive
constant $C_{\rho}$ such that the remainder term satisfies the bound
\begin{equation}
\label{bound-stability-1}
\| R(\mathcal{W}) \|_{l^2}
\leq C_{\rho} \;\sup_{z \in \mathbb{R}} |\tilde{V}_{\epsilon}''''(w(z))| \;\| \mathcal{W} \|_{l^2}^3.
\end{equation}
In what follows, $C_\rho$ denotes a positive constant that depends only on $\rho$
and remains bounded as $\rho \to 0$.
Similarly to (\ref{FPU-lattice-perturbed}), we expand the energy (\ref{FPU-energy}) near the travelling wave
\begin{eqnarray}
\label{FPU-energy-perturbed}
H = H_0 + H_1 + H_2 + H_R,
\end{eqnarray}
where
\begin{eqnarray*}
H_0 & = & \frac{1}{2} \sum_{n \in \mathbb{Z}} p_{\rm stat}^2(n - ct) +
\sum_{n \in \mathbb{Z}} \tilde{V}_{\epsilon}(w_{\rm stat}(n-ct)), \\
H_1 & = & \sum_{n \in \mathbb{Z}} p_{\rm stat}(n - ct) \mathcal{P}_n +
\sum_{n \in \mathbb{Z}} \tilde{V}'_{\epsilon}(w_{\rm stat}(n-ct)) \mathcal{W}_n, \\
H_2 & = & \frac{1}{2} \sum_{n \in \mathbb{Z}} \mathcal{P}_n^2 +
\frac{1}{2} \sum_{n \in \mathbb{Z}} \tilde{V}''_{\epsilon}(w_{\rm stat}(n-ct)) \mathcal{W}_n^2,
\end{eqnarray*}
and the remainder term $H_R$ satisfies the bound
\begin{equation}
\label{bound-stability-2}
|H_R| \leq C_{\rho} \;\sup_{z \in \mathbb{R}} |\tilde{V}_{\epsilon}'''(w_{\rm stat}(z))| \;\| \mathcal{W} \|_{l^2}^3.
\end{equation}

From the time conservation of $H$, it follows that $H_0$ is independent of $t$. This can be checked by explicit
differentiation, using the system (\ref{diff-eq-q}),
\begin{eqnarray*}
\frac{d H_0}{dt} & = & \sum_{n \in \mathbb{Z}} p_{\rm stat}(n-ct)
\left[ - c p_{\rm stat}'(n - ct) + \tilde{V}'_{\epsilon}(w_{\rm stat}(n-1-ct)) - \tilde{V}'_{\epsilon}(w_{\rm stat}(n-ct)) \right] = 0,
\end{eqnarray*}
On the other hand, $H_1$ is no longer constant. Using (\ref{diff-eq-q}) and (\ref{FPU-lattice-perturbed}),
we obtain
\begin{eqnarray}
\nonumber
\frac{d H_1}{d t}
& = & \sum_{n \in \mathbb{Z}} \left[ p_{\rm stat}(n-ct) \dot{\mathcal{P}}_n
+ \tilde{V}_{\eps}''(w_{\rm stat}(n-ct)) (p_{\rm stat}(n+1-ct) - p_{\rm stat}(n-ct)) \mathcal{W}_n \right] \\
& = & \frac{c}{2} \sum_{n \in \mathbb{Z}} w_{\rm stat}'(n - ct) \tilde{V}_{\epsilon}'''(w_{\rm stat}(n - ct)) \mathcal{W}_n^2
+ S_R, \label{balance-H1}
\end{eqnarray}
where the remainder term satisfies the bound
\begin{equation}
\label{bound-stability-3}
|S_R| \leq C_{\rho} \; \sup_{z \in \mathbb{R}} | \tilde{V}_{\epsilon}''''(w_{\rm stat}(z)) w_{\rm stat}'(z) | \;
\| \mathcal{W} \|_{l^2}^3,
\end{equation}
which follows from the bound (\ref{bound-stability-1}).

We shall now recall that
$$
\tilde{V}_{\epsilon}''(w) = (1+\epsilon^2) (1 + w)^{\epsilon^2}, \quad
\tilde{V}_{\epsilon}'''(w) = \epsilon^2 (1+\epsilon^2) (1 + w)^{\epsilon^2-1},
$$
and so on. As a result, $H_2$ is a convex quadratic form with the lower bound
\begin{equation}
\label{bound-stability-4}
H_2 \geq \frac{1}{2} \| \mathcal{P} \|_{l^2}^2 + \frac{1}{2} \| \mathcal{W} \|_{l^2}^2.
\end{equation}

Using the bound (\ref{bound-major-derivatives}) for $k = 1$, bounds (\ref{bound-stability-3}) and (\ref{bound-stability-4}),
we can estimate the balance equation (\ref{balance-H1}) as follows:
\begin{eqnarray*}
\left| \frac{d H_1}{d t} \right| & \leq  & C_{\rho} \epsilon^3 (1 + \rho) \| \mathcal{W} \|_{l^2}^2
\leq 2 C_{\rho} \epsilon^3 (1 + \rho) H_2,
\end{eqnarray*}
as long as $\| W \|_{l^2} \leq \rho$, where the positive constant $C_{\rho}$ is independent of $\epsilon$.
As a result, we obtain the lower bound
\begin{eqnarray}
\label{bound-stability-5}
H_1(t) - H_1(0) \geq  - 2 C_{\rho} \epsilon^3 (1 + \rho) \int_0^{|t|} H_2(t') dt'.
\end{eqnarray}
Now, using the energy expansion (\ref{FPU-energy-perturbed}) as well as the bounds (\ref{bound-stability-2})
and (\ref{bound-stability-5}), we can write
\begin{equation}
H - H_0 - H_1(0) \geq
- 2 C_{\rho} \epsilon^3 (1 + \rho) \int_0^{|t|} H_2(t') dt'
+ H_2(t) (1 - C_{\rho} \epsilon^2 \rho).
\end{equation}
By Gronwall's inequality, we obtain
\begin{equation}
\label{bound-stability-6}
H_2(t) \leq \frac{H - H_0 - H_1(0)}{1 - C_{\rho} \epsilon^2 \rho} e^{\tilde{C}_{\rho} \epsilon^3 |t|},
\end{equation}
where $\tilde{C}_{\rho}$ is another positive $\epsilon$-independent constant. Since $H - H_0 - H_1(0)$ is
$t$-independent, we can express it at $t = 0$ by
\begin{equation}
\label{bound-stability-7}
H - H_0 - H_1(0) = H_2(0) + H_R(0) \leq \tilde{\tilde{C}}^2_{\rho} \delta^2,
\end{equation}
where $\tilde{\tilde{C}}^2_{\rho}$ is yet another positive $\epsilon$-independent constant 
and the initial bound (\ref{bound-initial}) is used.
When $\tau_0>0$ is given, the bounds (\ref{bound-stability-4}),
(\ref{bound-stability-6}), and (\ref{bound-stability-7})
imply the stability bound (\ref{bound-final}) for $\epsilon\in(0,\epsilon_0)$ and  $\tau\in [-\tau_0,\tau_0]$,
with sufficiently small constants $\epsilon_0>0$ and $\delta_0\in(0,1)$ 
with 
$$
C_0 > 2 \tilde{\tilde{C}}_{\rho} e^{\frac{1}{2} \tilde{C}_{\rho} \tau_0}.
$$
Theorem \ref{theorem-stability} is proved in the ball $B_{\rho} \subset l^2(\mathbb{Z})$ 
with the radius $\rho := C_0 \delta$.

\section{Justification analysis for time-dependent solutions}

This section presents the proof of Theorem \ref{theorem-justification}.
In fact, it is a modification of the arguments in the proof of Theorem \ref{theorem-stability}.
The arguments follow quite closely to the method described by Schneider and Wayne \cite{SW00}, where interactions of
counter-propagating waves have also been included. We add this section for completeness, as well as
for comparison with stability theory of travelling waves in FPU lattices as described by KdV-type equations.

From the assumptions of Theorem \ref{theorem-justification}, we know there exist constants
$r_W$ and $R_W$ such that
\begin{equation} \label{boundsW}
-1 < r_W \leq W(\xi,\tau) \leq R_W,
\quad \xi \in \mathbb{R}, \; \tau \in [-\tau_0,\tau_1].
\end{equation}
For $\epsilon_0>0$ small enough, for all $\epsilon\in(0,\epsilon_0)$,
initial data $(w_{{\rm ini},\epsilon},p_{{\rm ini},\epsilon})$ satisfying the bound \eqref{bound-initial-time}
are such that all the terms in the sequence $w_{{\rm ini},\epsilon}$ are greater than some $r>-1$ independent of
$\epsilon$. Thus there exists a solution $(w,p)\in C^1([-T_0,T_1],l^2(\mathbb{Z}))$ to
the FPU lattice equations \eqref{FPU-lattice},
at least for small times $T_0,T_1>0$. We show that, with $\epsilon_0$ small enough, we can ensure
$T_0 \geq \tau_0\epsilon^{-3}$ and $T_1 \geq \tau_1\epsilon^{-3}$, together with the approximation
\eqref{bound-final-time}.

Let us use the decomposition
\begin{equation}
\label{decomposition-time}
w_n(t) = W(\epsilon (n-t), \epsilon^3 t) + \mathcal{W}_n(t), \quad
p_n(t) = P_{\epsilon}(\epsilon (n-t),\epsilon^3 t) + \mathcal{P}_n(t), \quad n \in \mathbb{Z},
\end{equation}
where $W(\xi,\tau)$ is the considered smooth solution to the log--KdV equation (\ref{LogKdV-background})
(and thus $W$ is $\epsilon$-independent),
whereas the $\epsilon$-dependent function $P_{\epsilon}(\xi,\tau)$ is found from the truncation
of the first equation of the system (\ref{FPU-lattice}) rewritten as
\begin{equation} \label{FPU-lattice-first-trucated}
P_{\epsilon}(\xi + \epsilon,\tau) - P_{\epsilon}(\xi,\tau) =
-\epsilon \partial_{\xi} W(\xi,\tau) + \epsilon^3 \partial_{\tau} W(\xi,\tau).
\end{equation}

We look for an approximate solution $P_{\epsilon}$ to this equation, under the form
\begin{equation}
\label{expansion-P}
P_{\epsilon} := P^{(0)} + \epsilon P^{(1)} + \epsilon^2 P^{(2)} + \epsilon^3 P^{(3)},
\end{equation}
with functions $P^{(j)}$ decaying to zero as $\xi$ goes to infinity.
Plug this ansatz into (\ref{FPU-lattice-first-trucated}) and collect together the powers of $\epsilon$:
{\small \begin{eqnarray*}
\mathcal{O}(\epsilon) :
& \quad \partial_{\xi} P^{(0)} = - \partial_{\xi} W , \quad & \mbox{satisfied when} \quad P^{(0)} = - W, \\
\mathcal{O}(\epsilon^2) :
& \quad \partial_{\xi} P^{(1)} + \frac{1}{2} \partial_{\xi}^2 P^{(0)}= 0 , \quad
& \mbox{satisfied when} \quad P^{(1)} = \frac{1}{2} \partial_{\xi} W, \\
\mathcal{O}(\epsilon^3) :
& \begin{array}[t]{l} \partial_{\xi} P^{(2)} + \frac{1}{2} \partial_{\xi}^2 P^{(1)}
+ \frac{1}{6} \partial_{\xi}^3 P^{(0)} = \\
-\frac{1}{24} \partial_{\xi}^3 W -\frac{1}{2} \partial_{\xi} g(W) ,\end{array}
& \mbox{satisfied when} \quad P^{(2)} = -\frac{1}{8} \partial_{\xi}^2 W - \frac{1}{2} g(W),  \\
\mathcal{O}(\epsilon^4) :
& \partial_{\xi} P^{(3)}
\begin{array}[t]{l} + \frac{1}{2} \partial_{\xi}^2 P^{(2)} \\
+ \frac{1}{6} \partial_{\xi}^3 P^{(1)} + \frac{1}{24} \partial_{\xi}^4 P^{(0)} = 0 , \end{array}
& \mbox{satisfied when} \quad
P^{(3)} = \frac{1}{48} \partial_{\xi}^3 W
+ \frac{1}{4} \partial_{\xi} g(W),
\end{eqnarray*}}
where $g(w) := (1+w) \log(1+w)$.
Recall from the proof of Theorem \ref{theorem-stationary} that we can write the nonlinear potential
in the perturbed form
$$
\tilde{V}_{\epsilon}'(w) = w + \epsilon^2 g(w) + M_{\epsilon}(w),$$
where
$$
M_{\epsilon}(w) = \log^2(1 + w) \int_0^{\epsilon^2} \left( \int_0^x (1 + w)^{1+y} dy \right) dx.
$$
Substituting the decomposition (\ref{decomposition-time}) into the FPU lattice equations (\ref{FPU-lattice}),
we obtain the evolution problem for the error terms
\begin{eqnarray}
\left\{
\begin{split}
& \dot{\mathcal{W}}_n(t) = \mathcal{P}_{n+1}(t) - \mathcal{P}_n(t) + {\rm Res}_n^{(1)}(t), \\
& \dot{\mathcal{P}}_n(t) = \mathcal{W}_n(t) - \mathcal{W}_{n-1}(t) \\
& \qquad \quad + \epsilon^2 g'(W(\epsilon(n-t),\epsilon^3t)) \mathcal{W}_{n}(t)
- \epsilon^2 g'(W(\epsilon(n-1-t),\epsilon^3t)) \mathcal{W}_{n-1}(t) \\
& \qquad \quad + \mathcal{R}_n(W,\mathcal{W})(t) + {\rm Res}_n^{(2)}(t),
\end{split} \right.
\label{FPU-lattice-time}
\end{eqnarray}
where
{\small \begin{equation*}
\begin{split}
\mathcal{R}_n(W,\mathcal{W}) & :=
\epsilon^2 \left( g(W(\epsilon(n-\cdot),\epsilon^3\cdot)+\mathcal{W}_n)
- g(W(\epsilon(n-\cdot),\epsilon^3\cdot))
- g'(W(\epsilon(n-\cdot),\epsilon^3\cdot))\mathcal{W}_{n} \right) \\
& \phantom{tex} - \epsilon^2 \left( g(W(\epsilon(n-1-\cdot),\epsilon^3\cdot)+\mathcal{W}_{n-1})
- g(W(\epsilon(n-1-\cdot),\epsilon^3\cdot))
- g'(W(\epsilon(n-1-\cdot),\epsilon^3\cdot))\mathcal{W}_{n-1} \right) \\
& \phantom{tex} + M_{\epsilon}(W(\epsilon(n-\cdot),\epsilon^3\cdot)+\mathcal{W}_n)
- M_{\epsilon}(W(\epsilon(n-1-\cdot),\epsilon^3\cdot)+\mathcal{W}_{n-1})
\end{split}
\end{equation*}}
and
\begin{equation*}
\begin{split}
{\rm Res}_n^{(1)}(t) & := P_\epsilon(\epsilon(n+1-t),\epsilon^3t) - P_\epsilon(\epsilon(n-t),\epsilon^3t), \\
{\rm Res}_n^{(2)}(t) & :=
\epsilon \partial_{\xi} P_{\epsilon}(\epsilon(n-t),\epsilon^3t)
- \epsilon^3 \partial_{\tau} P_{\epsilon}(\epsilon(n-t),\epsilon^3t) \\
& \phantom{tex} + W(\epsilon(n-t),\epsilon^3t) - W(\epsilon(n-1-t),\epsilon^3t) \\
& \phantom{tex} + \epsilon^2 g(W(\epsilon(n-t),\epsilon^3t)) - \epsilon^2 g(W(\epsilon(n-1-t),\epsilon^3t)).
\end{split}
\end{equation*}

Lemma~\ref{lemma-residual} below deals with estimating the nonlinear and residual term
of the system (\ref{FPU-lattice-time}). Its proof relies on the following lemma,
which is an improvement of Lemma 3.9 from \cite{SW00}.

\begin{lem} \label{Hs-to-l2}
There exists $C>0$ such that for all $X \in H^1(\R)$ and $\epsilon \in (0,1]$,
\begin{equation*}
\| x \|_{l^2} \leq C \epsilon^{-1/2} \| X \|_{H^1},
\end{equation*}
where $x_n := X(\epsilon n)$, $n \in \mathbb{Z}$. 
\end{lem}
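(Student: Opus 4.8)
The plan is to exploit the one-dimensional Sobolev embedding $H^1(\mathbb{R}) \hookrightarrow C(\mathbb{R})$ together with absolute continuity of $H^1$ functions: the value $X(\epsilon n)$ differs from a local average of $X$ only by a controlled amount involving $X'$ on the interval of length $\epsilon$ containing $\epsilon n$, and summing these local estimates produces the factor $\epsilon^{-1/2}$. Concretely, since $X\in H^1(\mathbb{R})$ is (after modification on a null set) absolutely continuous with $X'\in L^2(\mathbb{R})$, for every $n\in\mathbb{Z}$ and every $y\in[\epsilon n,\epsilon(n+1)]$ one has
$$
X(\epsilon n) = X(y) - \int_{\epsilon n}^{y} X'(s)\,ds ,
$$
whence $\displaystyle |X(\epsilon n)| \le |X(y)| + \int_{\epsilon n}^{\epsilon(n+1)} |X'(s)|\,ds$.

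The next step is to square this inequality, using $(a+b)^2\le 2a^2+2b^2$ and the Cauchy--Schwarz bound $\bigl(\int_{\epsilon n}^{\epsilon(n+1)}|X'(s)|\,ds\bigr)^2 \le \epsilon\int_{\epsilon n}^{\epsilon(n+1)}|X'(s)|^2\,ds$, to get, for each $y$ in the interval,
$$
|X(\epsilon n)|^2 \le 2|X(y)|^2 + 2\epsilon\int_{\epsilon n}^{\epsilon(n+1)}|X'(s)|^2\,ds .
$$
Integrating this in $y$ over $[\epsilon n,\epsilon(n+1)]$ (a set of measure $\epsilon$) and dividing by $\epsilon$ yields
$$
|X(\epsilon n)|^2 \le \frac{2}{\epsilon}\int_{\epsilon n}^{\epsilon(n+1)}|X(y)|^2\,dy + 2\epsilon\int_{\epsilon n}^{\epsilon(n+1)}|X'(s)|^2\,ds .
$$

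Finally I would sum over $n\in\mathbb{Z}$; the intervals $[\epsilon n,\epsilon(n+1)]$ tile $\mathbb{R}$, so the right-hand side collapses to $\tfrac{2}{\epsilon}\|X\|_{L^2}^2 + 2\epsilon\|X'\|_{L^2}^2$, which for $\epsilon\in(0,1]$ is at most $\tfrac{2}{\epsilon}\bigl(\|X\|_{L^2}^2+\|X'\|_{L^2}^2\bigr) = \tfrac{2}{\epsilon}\|X\|_{H^1}^2$. This gives $\|x\|_{l^2}\le \sqrt{2}\,\epsilon^{-1/2}\|X\|_{H^1}$, i.e. the claim with $C=\sqrt{2}$. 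There is no genuine obstacle here; the only point deserving a word of care is the pointwise identity for a general (not smooth) element of $H^1(\mathbb{R})$, which is justified by the absolute continuity of one-dimensional Sobolev functions, or alternatively by first proving the estimate for $X\in C_c^\infty(\mathbb{R})$ and then extending it by density, using that the estimate itself shows the sampling map $X\mapsto(X(\epsilon n))_{n}$ is bounded from $H^1(\mathbb{R})$ to $l^2(\mathbb{Z})$.
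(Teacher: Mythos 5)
Your proof is correct, and it takes a genuinely different route from the paper's. You work entirely on the physical side: absolute continuity of one-dimensional $H^1$ functions gives $X(\epsilon n)=X(y)-\int_{\epsilon n}^{y}X'(s)\,ds$ on each cell $[\epsilon n,\epsilon(n+1)]$, and averaging in $y$ over the tiling cells converts the pointwise samples into the $L^2$ norm of $X$ at cost $\epsilon^{-1}$ plus an $O(\epsilon)$ contribution from $X'$; this is elementary, yields the explicit constant $C=\sqrt{2}$, and your closing remark on handling general $X\in H^1$ (continuous representative, or density plus boundedness of the sampling map) is exactly the right care to take. The paper instead works on the Fourier side: it identifies the discrete Fourier transform of the sample sequence with the $\epsilon$-periodization $\frac{1}{\epsilon}\sum_{m}\hat{X}((\theta+2\pi m)/\epsilon)$ of $\hat{X}$ (a Poisson-summation argument), then controls the aliasing sum by a weighted Cauchy--Schwarz with weights $(1+\pi^2m^2/\epsilon^2)^{-1}$, whose summability over $m$ is precisely what the $H^1$ (rather than $L^2$) regularity pays for; it too proves the bound for Schwartz functions first and extends by density and Fatou. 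Your argument is shorter and more self-contained; the paper's argument makes visible the frequency-aliasing mechanism behind the loss of $\epsilon^{-1/2}$ and sits naturally next to the $H^s$-based residual estimates of Lemma 4.2 (it is presented as an improvement of Lemma 3.9 of Schneider--Wayne), but for the statement as given both deliver the same conclusion.
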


\begin{proof}
We first prove the above inequality when $X$ is in the Schwartz class.
Denote $x_n:=X(\epsilon n)$, and let $\hat{x} : \mathbb{R} \rightarrow \mathbb{C}$
be the $2\pi$-periodic $C^\infty$ function defined by
$$
\hat{x}(\theta) := \sum_{n\in\mathbb{Z}} x_n e^{-in\theta},
$$
so that
$$
x_n = \frac{1}{2\pi} \int_{-\pi}^\pi \hat{x}(\theta) \, e^{in\theta} {\rm d}\theta,
\quad n\in\mathbb{Z}.
$$
On the other hand, by the inverse Fourier transform applied to $X$, we have
\begin{eqnarray*}
x_n & = &  \frac{1}{2\pi} \int_{-\infty}^\infty \hat{X}(k) \, e^{ik\epsilon n} {\rm d}k \\
& = & \frac{1}{2\pi\epsilon} \int_{-\infty}^\infty \hat{X}\left(\frac{p}{\epsilon}\right) \, e^{ipn} {\rm d}p \\
& = & \frac{1}{2\pi\epsilon} \sum_{m\in\mathbb{Z}}
\int_{(2m-1)\pi}^{(2m+1)\pi} \hat{X}\left(\frac{p}{\epsilon}\right) \, e^{ipn} {\rm d}p \\
& = & \frac{1}{2\pi\epsilon} \sum_{m\in\mathbb{Z}}
\int_{-\pi}^{\pi} \hat{X}\left(\frac{\theta+2\pi m}{\epsilon}\right) \, e^{in\theta} {\rm d}\theta .
\end{eqnarray*}
Due to the decay of $\hat{X}$, summation and integral can be inverted.
Then, the $2\pi$-periodic $C^\infty$ function
$\displaystyle\theta\mapsto\frac{1}{\epsilon}\sum_{m\in\mathbb{Z}}\hat{X}\left(\frac{\theta+2\pi m}{\epsilon}\right)$
has the same (inverse) Fourier coefficients as $\hat{x}$, so that they coincide:
$$
\hat{x}(\theta) = \frac{1}{\epsilon}\sum_{m\in\mathbb{Z}}\hat{X}\left(\frac{\theta+2\pi m}{\epsilon}\right), \quad \theta \in \mathbb{R}.
$$
Now, using Parseval's equality, we estimate the $l^2$ norm of $x$,
\begin{eqnarray*}
\| x \|_{l^2}^2
& = & \frac{1}{2\pi\epsilon^2} \int_{-\pi}^\pi
\Big| \sum_{m\in\mathbb{Z}} \hat{X}\left(\frac{\theta+2\pi m}{\epsilon}\right) \Big|^2 {\rm d}\theta \\
& \leq & \frac{1}{2\pi\epsilon^2} \int_{-\pi}^\pi \sum_{m_1,m_2\in\mathbb{Z}}
\left| \hat{X}\left(\frac{\theta+2\pi m_1}{\epsilon}\right) \right| \, \left| \hat{X}\left(\frac{\theta+2\pi m_2}{\epsilon}\right) \right| {\rm d}\theta \\
& \leq & \frac{1}{2\pi\epsilon^2} \sum_{m_1,m_2\in\mathbb{Z}} \int_{-\pi}^\pi
\left| \hat{X}\left(\frac{\theta+2\pi m_1}{\epsilon}\right) \right| \, \left| \hat{X}\left(\frac{\theta+2\pi m_2}{\epsilon}\right) \right| {\rm d}\theta .
\end{eqnarray*}
Inserting the weights $(1+\pi^2m_1^2/\epsilon^2)^{-1}(1+\pi^2m_2^2/\epsilon^2)^{-1}$
and using Cauchy--Scwarz inequality, we get
{\small \begin{eqnarray*}
& & \int_{-\pi}^\pi \left| \hat{X}\left(\frac{\theta+2\pi m_1}{\epsilon}\right) \right| \,
\left| \hat{X}\left(\frac{\theta+2\pi m_2}{\epsilon}\right) \right| {\rm d}\theta \leq
\frac{1}{1+\pi^2m_1^2/\epsilon^2} \frac{1}{1+\pi^2m_2^2/\epsilon^2} \\
& & \times \left( \frac12\int_{-\pi}^\pi (1+\pi^2m_1^2/\epsilon^2)^2
\left| \hat{X}\left(\frac{\theta+2\pi m_1}{\epsilon}\right) \right|^2 {\rm d}\theta +
\frac12\int_{-\pi}^\pi (1+\pi^2m_2^2/\epsilon^2)^2
\left| \hat{X}\left(\frac{\theta+2\pi m_2}{\epsilon}\right) \right|^2 {\rm d}\theta \right).
\end{eqnarray*}
}Summing w.r.t. $m_1$ and $m_2$, the two terms in the right-hand side above result in the same quantity,
so that
$$
\| x \|_{l^2}^2
\leq \frac{1}{2\pi\epsilon^2} \left( \sum_{m_1\in\mathbb{Z}} \frac{1}{1+\pi^2m_1^2/\epsilon^2} \right)
\left( \sum_{m_2\in\mathbb{Z}} (1+\pi^2m_2^2/\epsilon^2)
\int_{-\pi}^\pi \left| \hat{X}\left(\frac{\theta+2\pi m_2}{\epsilon}\right) \right|^2 {\rm d}\theta \right).
$$
For $\epsilon\in(0,1]$, the first term in the product takes values between $1$ and
$\sum_{m\in\mathbb{Z}} (1+\pi^2m^2)^{-1} < \infty$. The second term can be compared
with the $H^1$ norm of $X$:
\begin{eqnarray*}
\| X \|_{H^1}^2
& = & \frac{1}{2\pi} \int_{-\infty}^\infty (1+k^2) \left| \hat{X}(k) \right|^2 {\rm d}k \\
& = & \frac{1}{2 \pi \epsilon} \sum_{m\in\mathbb{Z}} \int_{-\pi}^\pi
(1+(\theta+2\pi m)/\epsilon)^2) \left| \hat{X}\left(\frac{\theta+2\pi m}{\epsilon}\right)) \right|^2 {\rm d}\theta.
\end{eqnarray*}
For any $m\in\mathbb{Z}$, $\theta\in[-\pi,\pi]$, we have $(\theta+2\pi m)^2\geq\pi^2m^2$,
so that the factor $(1+(\theta+2\pi m)/\epsilon)^2$ is bounded from below by $(1+\pi^2m^2/\epsilon^2)$.
This gives the desired inequality (with a constant $C$ equal to
$(\sum_{m\in\mathbb{Z}} (1+\pi^2m^2)^{-1/2}$, for example).

When $X$ belongs to $H^1(\mathbb{R})$, we can consider a sequence $\{ X^{(k)} \}_{k\in\mathbb{N}}$
of functions in the Schwartz class converging to $X$ in $H^1$. For each $\epsilon\in(0,1]$
and $n\in\mathbb{Z}$, $X^{(k)}(\epsilon n)$ tends to $X(\epsilon n)$ as $k$ tends to infinity,
and Fatou's lemma concludes the proof.
\end{proof}

\begin{lem} \label{lemma-residual}
Let $W \in C([-\tau_0,\tau_1],H^s(\mathbb{R}))$ be a solution to the log--KdV equation
\eqref{LogKdV-background}, for an integer $s \geq 6$ and $\tau_0,\tau_1\geq0$.
Assume that there exists $r_W>-1$ such that $W\geq r_W$.
Then, there exists a positive constant $C_W$ such that for all
$t\in[-\tau_0\epsilon^{-3},\tau_1\epsilon^{-3}]$ and $\epsilon\in(0,1]$,
\begin{equation}
\label{bound-rem}
\| {\rm Res}^{(1)}(t) \|_{l^2}
+ \| {\rm Res}^{(2)}(t) \|_{l^2} \leq C_W \epsilon^{9/2}.
\end{equation}
Furthermore, for $\epsilon_0\in(0,1]$ and for all $\epsilon\in(0,\epsilon_0]$, let
$\mathcal{W}^\epsilon \in C([-\tau_0\epsilon^{-3},\tau_1\epsilon^{-3}],l^2(\mathbb{Z}))$ be such that,
for some $r>-1$ and $R>0$ independent of $\epsilon$,
\begin{equation}
\label{boundsCalW}
-1 < r \leq W(\epsilon(n-t),\epsilon^3 t) + \mathcal{W}^\epsilon_n(t) \leq R < \infty,
\quad n \in \mathbb{Z}, \;\; t \in [-\tau_0\epsilon^{-3},\tau_1\epsilon^{-3}].
\end{equation}
Then, there exists a positive constant $C_{r,R,W}$ such that,
for all $\epsilon \in (0,\epsilon_0]$, we have
\begin{equation}
\label{bound-non}
\| \mathcal{R}(W,\mathcal{W}^\epsilon)(t) \|_{l^2} \leq
C_{r,R,W} (\epsilon^2 \| \mathcal{W}^\epsilon(t) \|_{l^2}^2
+ \epsilon^4 \| \mathcal{W}^\epsilon(t) \|_{l^2} + \epsilon^{9/2}),
\quad t \in [-\tau_0\epsilon^{-3},\tau_1\epsilon^{-3}].
\end{equation}
In addition, the constant $C_{r,R,W}$ may be kept with the same value when $\epsilon_0$ is decreased.
\end{lem}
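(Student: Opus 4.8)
The plan is to view each of ${\rm Res}^{(1)}$, ${\rm Res}^{(2)}$ and the nonlinear term $\mathcal{R}(W,\mathcal{W}^\epsilon)$ as the restriction to the points $\epsilon(n-t)$, $n\in\mathbb{Z}$, of a continuous function of $\xi\in\mathbb{R}$ (with $\tau=\epsilon^3 t$ frozen), to estimate that function in $H^1(\mathbb{R})$, and then to transfer the bound to $l^2(\mathbb{Z})$ by Lemma~\ref{Hs-to-l2} (after a harmless $\xi$-translation by $\epsilon t$, which does not affect the $H^1$ norm), which costs a factor $\epsilon^{-1/2}$. For the residuals, the key point is that $P_\epsilon$ in (\ref{expansion-P}) was designed so that the contributions of order $\epsilon^0$ through $\epsilon^4$ cancel: expanding the discrete differences $P_\epsilon(\xi+\epsilon,\tau)-P_\epsilon(\xi,\tau)$, $W(\xi,\tau)-W(\xi-\epsilon,\tau)$ and $g(W(\xi,\tau))-g(W(\xi-\epsilon,\tau))$ by Taylor's formula with integral remainder, writing $\tilde{V}_\epsilon'(w)=w+\epsilon^2 g(w)+M_\epsilon(w)$, and replacing each $\tau$-derivative of $W$ by $\xi$-derivatives through the log--KdV equation (\ref{LogKdV-background}), the terms of order $\epsilon^0,\dots,\epsilon^4$ vanish precisely by the relations displayed just before (\ref{FPU-lattice-time}). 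What remains is $\mathcal{O}(\epsilon^5)$, with coefficients that are finite linear combinations of products of $\xi$-derivatives of $W$ of total order at most $6$; the hypothesis $s\geq 6$ is exactly what guarantees that these coefficients lie in $H^1(\mathbb{R})$, uniformly for $\tau\in[-\tau_0,\tau_1]$ since $W\in C([-\tau_0,\tau_1],H^s)$. Applying Lemma~\ref{Hs-to-l2} then gives $\|{\rm Res}^{(i)}(t)\|_{l^2}\leq C\epsilon^{-1/2}\,C_W\epsilon^5=C_W\epsilon^{9/2}$ for every $t\in[-\tau_0\epsilon^{-3},\tau_1\epsilon^{-3}]$ and $\epsilon\in(0,1]$; the long time interval for $t$ corresponds to the fixed interval $[-\tau_0,\tau_1]$ for $\tau$, so these estimates are automatically uniform in $t$, which proves (\ref{bound-rem}).

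For the nonlinear term, I would split $\mathcal{R}_n(W,\mathcal{W}^\epsilon)$ into its $\epsilon^2$-part (the difference of the quadratic Taylor remainders of $g$) and its $M_\epsilon$-part, and estimate each in $l^2$. Writing $W_n:=W(\epsilon(n-t),\epsilon^3 t)$, the $\epsilon^2$-part is $A_n-A_{n-1}$ with $A_n=\epsilon^2\bigl(g(W_n+\mathcal{W}_n^\epsilon)-g(W_n)-g'(W_n)\mathcal{W}_n^\epsilon\bigr)$; since $W_n$ and $W_n+\mathcal{W}_n^\epsilon$ lie in the fixed compact subinterval of $(-1,\infty)$ furnished by (\ref{boundsW}) and (\ref{boundsCalW}), on which $g\in C^\infty$, Taylor's inequality gives $|A_n|\leq C_{r,R}\,\epsilon^2(\mathcal{W}_n^\epsilon)^2$, hence $\|(A_n-A_{n-1})_n\|_{l^2}\leq 2\|A\|_{l^2}\leq C_{r,R}\,\epsilon^2\|\mathcal{W}^\epsilon(t)\|_{l^4}^2\leq C_{r,R}\,\epsilon^2\|\mathcal{W}^\epsilon(t)\|_{l^2}^2$. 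For the $M_\epsilon$-part $M_\epsilon(W_n+\mathcal{W}_n^\epsilon)-M_\epsilon(W_{n-1}+\mathcal{W}_{n-1}^\epsilon)$, I would add and subtract $M_\epsilon(W_n)$ and $M_\epsilon(W_{n-1})$: the two pieces of the form $M_\epsilon(W_k+\mathcal{W}_k^\epsilon)-M_\epsilon(W_k)$ are bounded by $\sup|M_\epsilon'|\,|\mathcal{W}_k^\epsilon|\leq C_{r,R,W}\,\epsilon^4|\mathcal{W}_k^\epsilon|$ (using that $M_\epsilon,M_\epsilon',M_\epsilon''$ are $\mathcal{O}(\epsilon^4)$ uniformly on compact subsets of $(-1,\infty)$, which follows from the integral formula for $M_\epsilon$), hence contribute $C_{r,R,W}\,\epsilon^4\|\mathcal{W}^\epsilon(t)\|_{l^2}$; the leftover piece $M_\epsilon(W_n)-M_\epsilon(W_{n-1})$ is a discrete $\xi$-difference of the sampled function $M_\epsilon(W(\cdot,\tau))$, equal to $\epsilon\int_0^1\partial_\xi[M_\epsilon(W)](\xi-\epsilon+\epsilon\sigma,\tau)\,d\sigma$, and since $\|\partial_\xi[M_\epsilon(W)](\cdot,\tau)\|_{H^1}\leq C_W\epsilon^4$, Lemma~\ref{Hs-to-l2} produces a contribution $\leq C\epsilon^{-1/2}\,\epsilon\,C_W\epsilon^4=C_W\epsilon^{9/2}$. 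Summing gives (\ref{bound-non}). Continuity of $t\mapsto{\rm Res}^{(i)}(t)$ and of $t\mapsto\mathcal{R}(W,\mathcal{W}^\epsilon)(t)$ in $l^2(\mathbb{Z})$ follows from the continuity of $\tau\mapsto W(\cdot,\tau)$ in $H^s$ and the uniform convergence of the $l^2$ sums.

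Finally, the assertion that $C_{r,R,W}$ survives a decrease of $\epsilon_0$ is immediate: every estimate above holds for all $\epsilon\in(0,1]$, with constants involving only explicit (already factored-out) powers of $\epsilon$, the fixed numbers $r,R$, suprema of fixed smooth functions over a fixed compact subinterval of $(-1,\infty)$, and norms of $W$ over $[-\tau_0,\tau_1]$; shrinking $\epsilon_0$ merely restricts the admissible range of $\epsilon$. The step I expect to be the main obstacle is the bookkeeping in the residual estimate: one has to verify explicitly that all terms of order $\epsilon^0$ through $\epsilon^4$ cancel—which requires repeatedly using (\ref{LogKdV-background}) to trade $\partial_\tau W$ for $-\tfrac{1}{24}\partial_\xi^3 W-\tfrac12\partial_\xi g(W)$—and then to check that the surviving $\mathcal{O}(\epsilon^5)$ coefficients, together with the extra derivatives generated when estimating them in $H^1$, never demand more than the $6$ derivatives of $W$ that are available; the rest reduces to Taylor's formula, Sobolev embedding and Lemma~\ref{Hs-to-l2}.
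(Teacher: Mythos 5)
Your proposal is correct and follows essentially the same route as the paper: Taylor expansion of the discrete differences with the cancellation of the $\epsilon^0$ through $\epsilon^4$ terms built into the choice of $P_\epsilon$ and the log--KdV equation, transfer of the resulting $\mathcal{O}(\epsilon^5)$ $H^1$-bounds to $l^2$ via Lemma~\ref{Hs-to-l2} at the cost of $\epsilon^{-1/2}$, and the same splitting of $\mathcal{R}_n$ into the quadratic Taylor remainder of $g$ and the $M_\epsilon$ contributions. Your treatment is if anything slightly more explicit than the paper's (the $l^4\hookrightarrow l^2$ step and the add-and-subtract of $M_\epsilon(W_n)$), but there is no substantive difference in method.
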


\begin{proof}
To obtain the part of estimate \eqref{bound-rem} concerning ${\rm Res}^{(1)}(t)$,
we use the definition \eqref{expansion-P}
of $P_\epsilon$, the expressions of the $P^{(j)}$'s as linear combinations of derivatives of $W$ and $g(W)$,
and Taylor expansions. The coefficients of $\epsilon^0, \dots, \epsilon^4$ vanish, due to the fact that $W$
is a solution to \eqref{LogKdV-background}. As a consequence,
${\rm Res}^{(1)}(t)$ is then expressed as a sum of
integrals of the form
$$
\epsilon^5 \int_0^1 (1-r)^k \partial_\xi^5 W(\epsilon(n-t+r),\epsilon^3t) {\rm d}r
\quad \mbox{and} \quad
\epsilon^5 \int_0^1 (1-r)^l \partial_\xi^2 g(W)(\epsilon(n-t+r),\epsilon^3t) {\rm d}r,
$$
with $0\leq k\leq4$ and $0\leq l\leq1$. The associated $l^2$ norm is them easily estimated in terms
of $\| W \|_{H^6}$, thanks to Lemma~\ref{Hs-to-l2}.
The proof of the rest of estimate \eqref{bound-rem} concerning ${\rm Res}^{(2)}(t)$ follows the same lines.

To prove (\ref{bound-non}), we recall that for all $r>-1$, there exists $C_r>0$ such that for all $w_1, w_2 \geq r$
and $\epsilon>0$,
\begin{eqnarray}
\label{Lipschitz-M}
|M_{\epsilon}(w_1) - M_{\epsilon}(w_2)| \leq \epsilon^4 C_r \left(1 + \max\{w_1,w_2\}\right)^{1+\epsilon^2}
\max\{w_1,w_2\} |w_1-w_2|.
\end{eqnarray}
Then, using again Taylor expansions, we get
\begin{eqnarray*}
\| \mathcal{R}(W,\mathcal{W})(t) \|_{l^2}
& \leq & C \left( \| g''(W(\epsilon(\cdot - t),\epsilon^3 t)) \|_{L^{\infty}} \| \mathcal{W} \|_{l^2}^2
+ \epsilon^4 \| W(\epsilon(\cdot - t),\epsilon^3 t)) \|_{L^{\infty}} \| \mathcal{W} \|_{l^2} \right. \\
& \phantom{t} & \qquad\qquad \left. + \epsilon^5
 \| (\partial_{\xi} W(\epsilon(\cdot - t),\epsilon^3 t))_{n\in\mathbb{Z}} \|_{l^2} \right),
\end{eqnarray*}
which yields the bound (\ref{bound-non}).
\end{proof}

Thanks to Lemma \ref{lemma-residual}, we complete the proof of Theorem \ref{theorem-justification}
using energy estimates. When $\epsilon_0>0$ is given, we consider for each $\epsilon\in(0,\epsilon_0)$
initial data $(w_{{\rm ini},\epsilon},p_{{\rm ini},\epsilon})$ satisfying the bound
\eqref{bound-initial-time}. Fixing
$$
r:=\frac{r_W-1}{2} \in(-1,r_W) \quad \mbox{\rm and} \quad R: = 2R_W > R_W,
$$
with $\epsilon_0$ small enough, we can define (for each $\epsilon\in(0,\epsilon_0)$) a local-in-time solution $(w,p)$
to the FPU lattice equations \eqref{FPU-lattice}, decomposed according to \eqref{decomposition-time}, and then set
$$
T_0^\star(\epsilon) :=
\sup\left\{T_0\in(0,\tau_0\epsilon^{-3}] : \quad
r \leq W(\epsilon(n-t),\epsilon^3 t) + \mathcal{W}_n(t) \leq R ,
\quad n \in \mathbb{Z}, \; t \in [-T_0,0] \right\} .
$$
and
$$
T_1^\star(\epsilon) :=
\sup\left\{T_1\in(0,\tau_1\epsilon^{-3}] : \quad
r \leq W(\epsilon(n-t),\epsilon^3 t) + \mathcal{W}_n(t) \leq R ,
\quad n \in \mathbb{Z}, \; t \in [0,T_1] \right\} .
$$
We shall prove that for $\epsilon_0$ small enough, we have $T_0^\star(\epsilon)=\tau_0\epsilon^{-3}$
and $T_1^\star(\epsilon)=\tau_1\epsilon^{-3}$.

Let us define the energy-type quantity
\begin{equation}
\mathcal{E}(t) := \frac{1}{2} \sum_{n \in \mathbb{Z}} \left[
\mathcal{P}_n^2(t) + \mathcal{W}_n^2(t) + \epsilon^2 g'(W(\epsilon(n-t),\epsilon^3t)) \mathcal{W}^2_{n}(t)
\right].
\label{energy-type}
\end{equation}
With $\epsilon_0<\min\left(1,\| 2g' \|_{L^\infty(r_W,R_W)}^{-1/2}\right)$,
from the bounds \eqref{boundsW}, we get, for $\epsilon\in(0,\epsilon_0)$,
$$
\| \mathcal{P}(t) \|_{l^2}^2 + \| \mathcal{W}(t) \|_{l^2}^2 \leq 4 \mathcal{E}(t),
\quad t\in(-T_0^\star,T_1^\star).
$$

Taking derivative of $\mathcal{E}$ w.r.t. time, we obtain
\begin{equation*}
\begin{split}
\frac{{\rm d} \mathcal{E}}{{\rm d} t} (t) =
\sum_{n \in \mathbb{Z}}
& \Big[ \mathcal{P}_n(t) \mathcal{R}_n(W,\mathcal{W})(t)
+ \mathcal{P}_n(t) {\rm Res}_n^{(2)}(t) \\
& + \mathcal{W}_{n}(t) [1 + \epsilon^2 g'(W(\epsilon(n-t),\epsilon^3t))]  {\rm Res}_n^{(1)}(t) \\
& + \frac{\epsilon^2}{2} g''(W(\epsilon(n-t),\epsilon^3t)) \mathcal{W}^2_{n}(t)
(-\epsilon \partial_{\xi} + \epsilon^3 \partial_{\tau}) W(\epsilon(n-t),\epsilon^3t) \Big].
\end{split}
\end{equation*}

Then, using Lemma \ref{lemma-residual} and the Cauchy--Schwarz inequality, we estimate
\begin{eqnarray*}
\left| \frac{{\rm d} \mathcal{E}}{{\rm d} t} \right|
& \leq & \| \mathcal{P} \|_{l^2} \| \mathcal{R}(W,\mathcal{W}) \|_{l^2}
+ \| \mathcal{P} \|_{l^2} \| {\rm Res}^{(2)} \|_{l^2} + \,\frac32\, \| \mathcal{W} \|_{l^2}
\left\| {\rm Res}^{(1)} \right\|_{l^2} + \,\epsilon^3 C_W \| \mathcal{W}(t) \|_{l^2}^2 \\
& \leq & C_W \mathcal{E}^{1/2}
\left( \epsilon^{9/2} + \epsilon^3 \mathcal{E}^{1/2} + \epsilon^2 \mathcal{E} \right),
\end{eqnarray*}
with a new constant $C_W$.
Choosing $\mathcal{Q} = \mathcal{E}^{1/2}$, we rewrite the energy balance equation in the form
\begin{equation*}
\left| \frac{{\rm d} \mathcal{Q}}{{\rm d} t} \right|
\leq C_W \left( \epsilon^{9/2} + \epsilon^3 \mathcal{Q} + \epsilon^2 \mathcal{Q}^2 \right).
\end{equation*}
By Gronwall's inequality, we obtain
\begin{equation*}
\mathcal{Q}(t) \leq
(\mathcal{Q}(0) + C_W \epsilon^{9/2} |t| ) \, e^{\epsilon^3 C_W |t|},
\quad t \in (-T_0^\star,T_1^\star).
\end{equation*}
Now, the bound \eqref{bound-initial-time} ensures that $\| (\mathcal{W},\mathcal{P}) \|_{L^2}$
is $\mathcal{O}(\epsilon^{3/2})$ at $t = 0$, so that from the definition \eqref{energy-type} of $\mathcal{E}$, $\mathcal{Q}(0)$
is also $\mathcal{O}(\epsilon^{3/2})$, for $\epsilon_0$ small enough. Thus, we get
\begin{equation} \label{lastGronwall}
\mathcal{Q}(t) \leq
C_W (1 + \max(\tau_0,\tau_1)) \, \epsilon^{3/2} e^{C_W \max(\tau_0,\tau_1)},
\quad t \in (-T_0^\star,T_1^\star).
\end{equation}
Finally, choosing $\epsilon_0$ so that the right-hand side in \eqref{lastGronwall} is so small that
$$
|\mathcal{W}_n(t)| \leq \max\left(\frac{1+r_W}{2},R_W\right)
$$
shows that for all $\epsilon\in(0,\epsilon_0)$,
$T_0^\star(\epsilon)=\tau_0\epsilon^{-3}$ and $T_1^\star(\epsilon)=\tau_1\epsilon^{-3}$.
Theorem \ref{theorem-justification} is proved.

\begin{rem} \label{higherorder}
Using instead of \eqref{decomposition-time} an asymptotic expansion
\begin{eqnarray*}
w_n(t) & = & W(\epsilon (n-t), \epsilon^3 t)
+ \sum_{k=1}^K \epsilon^k W^{(k)}(\epsilon (n-t), \epsilon^3 t) + \mathcal{W}_n(t), \\
p_n(t) & = & \sum_{k=0}^K \epsilon^k P^{(k)}(\epsilon (n-t), \epsilon^3 t) + \mathcal{P}_n(t),
\end{eqnarray*}
at any order $K\in\mathbb{N}$, together with expansion of $\tilde{V}_\epsilon'(w)$ in powers of $\epsilon^2$,
we could improve the approximation \eqref{bound-final-time}, replacing $C_0\epsilon^{3/2}$ by
$C_K \epsilon^{K/2}$, for any $K\in\mathbb{N}$. The approximation time remains $\mathcal{O}(\epsilon^{-3})$
in such an improved approximation.
\end{rem}

\section{Discussion}

The comparison of the two results given by Theorems \ref{theorem-stability} and \ref{theorem-justification}
raises a serious concern on the validity of the KdV-type approximation for the stability theory
of the travelling waves in the FPU lattices. On one hand, Theorem \ref{theorem-stability} yields
nonlinear stability of the FPU travelling waves up to the time scale of $\mathcal{O}(\epsilon^{-3})$
at which the travelling waves are proved to satisfy the specific scaling leading to the
KdV-type approximation. On the other hand, Theorem \ref{theorem-justification} shows that
the nonlinear stability of the FPU travelling waves may depend on the orbital stability of the travelling waves
in the KdV-type equations. It happens for the log--KdV equation (\ref{LogKdV-background}) that the
positive travelling waves are orbitally stable for all amplitudes \cite{H11}. However, it does not have
to be the case for all KdV-type equations.

For instance, if we consider the FPU lattice (\ref{FPU-lattice}) with the nonlinear potential
$$
\tilde{V}_{\epsilon}(w) = \frac{1}{2} w^2 + \frac{\epsilon^2}{p+1} w^{p+1}, \quad \mbox{\rm for an integer } \; p \geq 2,
$$
the results of Theorems \ref{theorem-stability} and \ref{theorem-justification} hold true but the
generalized KdV equation takes the form
\begin{equation}
\label{genKdV}
2 W_{\tau} + \frac{1}{12} W_{\xi \xi \xi} + (W^p)_{\xi} = 0.
\end{equation}
The generalized KdV equation (\ref{genKdV}) is known to have orbitally stable travelling waves
for $p = 2,3,4$ and orbitally unstable travelling waves for $p \geq 5$ \cite{Pava}. Thus, it may first appear
that the results of Theorems \ref{theorem-stability} and \ref{theorem-justification} are in contradiction.

No contradiction arises as a matter of fact. The energy methods used in the proof of
Theorems \ref{theorem-stability} and \ref{theorem-justification} give the upper bounds
on the approximation errors (\ref{bound-final}) and (\ref{bound-final-time}) to be
exponentially growing at the time scale of $\epsilon^3 t$, that is, on the time scale of
$\tau$. The unstable eigenvalues of the linearized generalized KdV equation (\ref{genKdV})
at the travelling waves (if they exist) lead to the exponential divergence at the time
scale of $\tau$, which can not be detected with the approximation results provided by
Theorems \ref{theorem-stability} and \ref{theorem-justification}.

Therefore, within the approximation results of Theorems \ref{theorem-stability} and \ref{theorem-justification},
we are still left wondering if the travelling waves of the FPU lattice with the nonlinear potential
$\tilde{V}_{\eps}$ for $\epsilon > 0$ small enough are nonlinearly stable
at the time scale of $\tau = \epsilon^3 t$.
What the stability result of Theorem \ref{theorem-stability}
rules out is the presence of the unstable eigenvalues of the linearized FPU lattice
of the size $\mathcal{O}(\epsilon^q)$ for any $q < 3$. However, unstable eigenvalues
of the size $\mathcal{O}(\epsilon^q)$ for $q \geq 3$ are still possible.

Note that the result of Theorem \ref{theorem-stability}
does not depend on the nonlinear potential $\tilde{V}_{\eps}$ as long
as the latter provides the specific scaling leading to the
KdV-type approximation. We did not have to
construct the two-dimensional manifold of the travelling waves or use
projections and modulation equations from the theory in \cite{FP2,FP3,FP4}.
Although this theory gives a complete proof of
nonlinear orbital stability of FPU travelling waves of small amplitudes,
it relies on the information about the spectral and asymptotic stabilities
of the KdV travelling waves, which is only available in the case of the integrable
KdV equation (\ref{genKdV}) with $p = 2$ (such information may also
be available in the case $p = 3$, since the corresponding so-called ``modified KdV''
equation is integrable as well). It is not clear at the present
time if any bits of the information needed to proceed with the theory in \cite{FP2,FP3,FP4}
can be obtained for the log--KdV equation (\ref{LogKdV-background}), although
the existing theory in \cite{H11} excludes unstable eigenvalues and guarantees
nonlinear orbital stability of the travelling waves in the log--KdV equation.

\vspace{0.25cm}

{\bf Acknowledgement.} The authors thank G. James for bringing up the problem and
J. H\"{o}wing for pointing out to his work \cite{H11}.
D.P. is supported by the Chaire d'excellence ENSL/UJF. He thanks members of  Institut Fourier, Universit\'e Grenoble
for hospitality and support during his visit (January-June, 2014).

\end{document}